\newtheorem{theorem}{Theorem}[section]
\newtheorem{corollary}[theorem]{Corollary}
\newtheorem{conjecture}{Conjecture}
\newtheorem{observation}[theorem]{Observation}
\newtheorem{claim}{Claim}
\newtheorem{remark}[theorem]{Remark}
\newtheorem{lemma}[theorem]{Lemma}
\newenvironment{prooftw}[2][Proof of Theorem]
{\par\noindent{\bf #1 #2.} }{\hspace*{\fill}\nolinebreak{$\Box$}\bigskip\par}
\newcommand\footnoteref[1]{\protected@xdef\@thefnmark{\ref{#1}}\@footnotemark}
\begin{document}
\markboth{ }{}
\title{\bf Equitable List Vertex Colourability and Arboricity of Grids}
\date{}
\author{Ewa Drgas-Burchardt\footnote{Faculty of Mathematics, Computer Science and Econometrics,\ University of Zielona G\'ora,\ Prof. Z. Szafrana 4a,\ 65-516 Zielona G\'ora,\ Poland.
\ e-mail: E.Drgas-Burchardt@wmie.uz.zgora.pl; E.Sidorowicz@wmie.uz.zgora.pl},
Janusz Dybizba\'nski\footnote{Institute of Informatics,\ Faculty of Mathematics, Physics and Informatics,\ University of Gda\'nsk,\ 80-308 Gda\'nsk,\ Poland. \ e-mail: jdybiz@inf.ug.edu.pl; hanna@inf.ug.edu.pl},
\\ Hanna Furma\'nczyk\footnotemark[2], El\.zbieta Sidorowicz\footnotemark[1]
}
\markboth{E. Drgas-Burchardt, J. Dybizba\'nski, H. Furma\'nczyk, E. Sidorowicz}{Equitable List Vertex Colourability and Arboricity of Grids}

\maketitle

\begin{abstract}
A graph $G$ is equitably $k$-list arborable if for any $k$-uniform list assignment $L$, there is an equitable $L$-colouring of $G$ whose each colour class induces an acyclic graph. 
The smallest number $k$ admitting such a coloring is named equitable list vertex arboricity and is denoted by $\rho_l^=(G)$. 
Zhang in 2016 posed the conjecture that if $k \geq \lceil (\Delta(G)+1)/2 \rceil$ then $G$ is equitably $k$-list arborable. 
We give some new tools that are helpful in determining values of $k$ for which a general graph is equitably $k$-list arborable. We use them to  prove the Zhang's conjecture for $d$-dimensional grids where $d \in \{2,3,4\}$
and give new bounds on $\rho_l^=(G)$ for 
general 
graphs and for $d$-dimensional grids with $d\geq 5$.
\end{abstract}

{\bf Keywords:} {equitable list vertex arboricity, equitable choosability, grids}

{\bf MSC:} {05C15, 05C76}

\section{Introduction}

All graphs considered in this paper are simple and undirected. For a graph $G$, we use $V(G)$, $E(G)$, and $\Delta(G)$ to denote vertex set, edge set, and the maximum degree of $G$, respectively. By $G[V^\prime]$ we mean  the subgraph of $G$ induced by a vertex subset $V^\prime$. 
To simplify the notation we write $G-V^\prime$  instead of $G[V(G)\setminus V^\prime]$. Analogously, we write $G-E^\prime$ to denote the graph obtained from $G$ by the deletion of an edge subset $E'$. By $G_1\cup G_2$ we mean the union of disjoint graphs $G_1,\;G_2$, i.e. the graph with vertex set $V(G_1)\cup V(G_2)$ and edge set $E(G_1)\cup E(G_2)$.

The symbol ${\mathbb N}$ stands for the set of positive integers, and moreover ${\mathbb N}_0={\mathbb N}\cup \{0\}$. Let $a,b\in {\mathbb N}_0$. If $a<b$ then $[a,b]$ denotes the set  $\{a,a+1,\ldots, b-1,b\}$, if $a=b$ then $[a,b]=\{a\}$, and  if $a>b$ then $[a,b]=\emptyset$. We adopt the convention $[1,b]=[b]$, moreover $[b]_{ODD}$ and $[b]_{EVEN}$ denote the sets of odd integers and even integers in $[b]$, respectively.

A \emph{colouring} of a graph $G$ is a mapping $c:V(G)\rightarrow \mathbb{N}$. A \emph{coloured graph} is then a pair $(G,c)$, where $G$ is a graph and $c$ is its colouring. 
A colouring of a graph $G$ is {\it proper} if each colour class induces an edgeless graph. 
A $k$-\emph{colouring} of a graph $G$ is a mapping $c:V(G)\rightarrow [k]$. A graph $G$ is {\it properly} $k$-{\it colourable} if there is a proper $k$-colouring of $G$. A graph $G$ is $k$-{\it arborable} if there is a $k$-colouring of $G$  such that each colour class induces an acyclic graph.

Let $L$ be a \emph{list assignment} (\emph{for a graph $G$}), i.e. a mapping that assigns to each vertex $v \in V(G)$ a set $L(v)$ of allowable colours. An $L$-\emph{colouring} of $G$ is a  colouring of $G$ such that for every $v \in V(G)$ the colour on $v$ belongs to $L(v)$.  A list assignment $L$ is $k$-\emph{uniform} if $|L(v)|=k$ for all $v \in V(G)$. A graph $G$ is   $k$-{\it choosable} if for each $k$-uniform list assignment $L$, we can find a proper $L$-colouring of $G$.   A graph $G$ is \emph{$k$-list  arborable} if,  given a $k$-uniform list assignment $L$, we can find an $L$-colouring of $G$ so that each colour class induces an acyclic subgraph of $G$. By $\chi(G)$, $\rho(G)$, $ch(G)$,  $\rho_l(G)$ we denote the minimum  $k\in \mathbb{N}$ such that $G$ is:  properly $k$-colourable,  $k$-arborable,  $k$-choosable,  $k$-list arborable, respectively. We call these numbers the \emph{chromatic number} of $G$,  the \emph{vertex arboricity} of $G$, the \emph{choice number} of $G$, the \emph{list vertex arboricity} of $G$, respectively. 
The invariant $\rho(G)$ was first introduced by Beineke in $1964$ \cite{beineke} and then it was investigated by many researchers. For example, Chartrand, Kronk, and Wall in $1968$ \cite{chart} proved that $\rho(G) \leq \lceil (\Delta(G)+1)/2\rceil$ for every graph $G$. Next, in 1995, Borowiecki, Drgas-Burchardt, and Mih\' ok \cite{BoDrMi95} introduced the list version of these problem.
They showed that $\rho_l(G) \leq \lceil (\Delta(G))/2\rceil$ for every connected graph $G$ excluding cycles and 
complete graphs of odd order.

In this paper we are mostly interested in  a non-classical model of graph colouring, known as equitable. 
A $k$-colouring of a graph $G$ is \emph{equitable} when each of its colour classes is of the cardinality either $\lceil |V(G)|/k \rceil$ or $\lfloor |V(G)|/k \rfloor$.  A graph $G$ is \emph{equitably properly $k$-colourable} if there exists an equitable proper $k$-colouring of $G$. The definition was firstly introduced by Meyer \cite{meyer} in 1973.
Recently, Wu, Zhang and Li \cite{wu} introduced the equitable version of vertex arborocity. 
A graph $G$ is \emph{equitably $k$-arborable} if there exists an equitable $k$-colouring of $G$  whose each colour class induces an acyclic graph. 
In the list version, given a $k$-uniform list assignment $L$ for $G$, we call an $L$-colouring of $G$ \emph{equitable} when each colour class has the cardinality at most  $\lceil |V(G)|/k \rceil$ (see \cite{KoPeWe03}). A graph $G$ is \emph{equitably $k$-choosable} when for any $k$-uniform list assignment $L$, there is an equitable proper $L$-colouring of $G$. A graph $G$ is \emph{equitably $k$-list arborable} when for any $k$-uniform list assignment $L$, there is an equitable $L$-colouring of $G$ whose each colour class induces an acyclic graph. The last definition was given by Zhang \cite{zhang} in 2016.
By $\chi^=(G)$,  $\rho^=(G)$, $ch^=(G)$, $\rho_l^=(G)$ we denote the minimum  $k\in \mathbb{N}$ such that $G$ is: equitably properly  $k$-colourable,   equitably $k$-arborable,  equitably $k$-choosable,  equitably $k$-list arborable, respectively. 
The numbers $\chi^=(G)$,  $\rho^=(G)$, $ch^=(G)$, $\rho_l^=(G)$  are called the \emph{equitable chromatic number} of $G$, the \emph{equitable vertex arboricity} of $G$,  the \emph{equitable choice number} of $G$, the \emph{equitable list vertex arboricity} of $G$, respectively. 

Hajn\' al and Szemer\'{e}di (\cite{HaSe70}) proved that a graph $G$ is equitably properly $k$-colourable whenever $k\geq \Delta(G)+1$. It caused a question posed  by P. Erd\"{o}s. Kostochka, Pelsmajer, and West \cite{KoPeWe03} conjectured the list version of this theorem. 

\begin{conjecture}[\cite{KoPeWe03}]
 If $k\in {\mathbb N}$ and  $k \geq \Delta(G)+1$ then every graph $G$ is equitably $k$-choosable. \label{con0}
\end{conjecture}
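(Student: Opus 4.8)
The plan is to attack Conjecture~\ref{con0} by strong induction on $|V(G)|$, reducing the statement to an \emph{extension} problem: delete one carefully chosen vertex, recolour what remains by the inductive hypothesis, and then reinsert the deleted vertex while simultaneously maintaining the two properties that can break, namely properness and equitability. The choice $k\geq\Delta(G)+1$ is exactly the threshold of the Hajn\'al--Szemer\'edi theorem (\cite{HaSe70}), at which every vertex has strictly more admissible colours than neighbours; so heuristically a greedy colour for the reinserted vertex should always exist, and the entire difficulty is concentrated in keeping the colour classes balanced under the rigid constraint imposed by the lists.

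First I would fix a $k$-uniform list assignment $L$ and pick a vertex $v$ to delete. Since $L$ restricted to $V(G)\setminus\{v\}$ is still $k$-uniform, the inductive hypothesis applied to $G-v$ yields an equitable proper $L$-colouring $c'$ in which every class has size at most $\lceil(|V(G)|-1)/k\rceil$. Because $|L(v)|=k\geq\Delta(G)+1>\deg_G(v)$, at least one colour $\alpha\in L(v)$ is absent on $N_G(v)$, so setting $c(v)=\alpha$ preserves properness. The only obstruction is equitability: the class of $\alpha$ may already sit at the cap $\lceil|V(G)|/k\rceil$, so that adding $v$ overfills it while some other colour class remains underfull. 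The core of the proof must therefore be a \emph{rebalancing} procedure that transfers the surplus to a deficient class.

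For the rebalancing I would try to adapt the discrepancy-reduction machinery behind Hajn\'al--Szemer\'edi to the list setting, using \emph{list Kempe chains} (alternating recolourings): starting from a vertex $u$ in the overfull class of $\alpha$, recolour $u$ to a colour $\beta\in L(u)$ that is free in its neighbourhood, propagating along an alternating structure until the surplus lands in an underfull class, and arguing by a potential/counting estimate that such a terminating chain always exists. In the classical non-list proof this works because colours are freely interchangeable, so one can build an auxiliary digraph of ``movable'' vertices and guarantee, by counting, a globally feasible simultaneous shift. The hard part, and the reason the conjecture is still open in full generality, is that lists destroy this interchangeability: a vertex that would be movable to a given class in the colour-symmetric setting may simply not have that colour in its list, so the movement digraph becomes list-dependent and far sparser, and the counting argument that produces a feasible shift no longer goes through.

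Consequently I expect the obstacle to be exactly this loss of symmetry, and I would not anticipate pushing the argument through for arbitrary $G$; rather, the realistic deliverable is to verify the conjecture for structured classes where the recolouring chains can be controlled, such as $d$-degenerate graphs (delete a vertex of degree at most $d$, giving extra slack for the insertion) or graphs of small maximum degree, which is consistent with the partial results already obtained in \cite{KoPeWe03}. A complete resolution would require either a genuinely new balancing idea that respects lists or a probabilistic/semi-random approach that enforces the equitability caps directly, both of which lie beyond the reach of the straightforward inductive scheme sketched here.
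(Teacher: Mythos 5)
The statement you were asked about is Conjecture~\ref{con0}: the paper does \emph{not} prove it, and neither does anyone else --- it is the open conjecture of Kostochka, Pelsmajer, and West \cite{KoPeWe03}, which the paper merely records as motivation. So there is no proof in the paper to compare yours against, and your own closing admission is accurate: what you have written is not a proof but a diagnosis of why the natural inductive scheme fails. Your diagnosis is the right one. The induction-plus-reinsertion plan breaks exactly at the rebalancing step, because once lists are present a vertex in an overfull class need not carry the colour of the deficient class in its list, so Kempe-chain or flow-type redistribution arguments from the Hajn\'al--Szemer\'edi setting do not transfer.

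One concrete technical remark that sharpens your sketch: the standard way to avoid the rebalancing problem altogether is not to delete a single vertex but to delete a set $S$ of exactly $k$ vertices and colour them with $k$ pairwise distinct colours when reinserting. Then each colour class grows by at most one, and an equitable colouring of $G-S$ extends to an equitable colouring of $G$ with no redistribution needed; the price is the neighbourhood condition $|N_G(x_i)\setminus S|\leq i-1$ needed to colour $S$ rainbow greedily. This is precisely Lemma~\ref{lem:Kostochka-Pelsmajer-West_2} in the paper, and it is the engine behind all known partial results on the conjecture (and behind the paper's own Lemmas~\ref{lem:vertex_partition} and \ref{lem:vertex_partition_arboricity}, which iterate it into a partition of $V(G)$). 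The conjecture remains open because such a set $S$ need not exist in an arbitrary graph with $k\geq\Delta(G)+1$; your proposal, had it adopted the $k$-vertex deletion, would have rediscovered this tool but would still have hit the same wall. So your assessment of the state of affairs is correct, and the honest conclusion --- that only structured classes are within reach --- matches both \cite{KoPeWe03} and what this paper actually does.
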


It has to be mentioned herein that equitable $k$-colouring is not monotone with respect to $k$. It means that there are graphs that are equitably $k$-colourable and not equitably $t$-colourable for some $t < k$. To the best of our knowledge  there are no results of this type on equitable $k$-choosability nor equitable $k$-list arborability.

On the other hand, Zhang \cite{zhang} formulated in $2016$  the following conjectures.

\begin{conjecture}[\cite{zhang}]
 For every graph $G$ it holds $\rho_l^=(G) \leq \lceil (\Delta(G)+1)/2 \rceil$. \label{con1}
\end{conjecture}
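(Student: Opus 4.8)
The plan is to prove the stronger statement that for $k=\lceil(\Delta(G)+1)/2\rceil$ every graph $G$ admits an equitable $L$-colouring in which \emph{each colour class induces a subgraph of maximum degree at most $1$}. Since a graph of maximum degree at most $1$ is a disjoint union of edges and isolated vertices, it is in particular acyclic, so such a colouring would immediately give $\rho_l^=(G)\le k$. Working with this defect-$1$ condition instead of acyclicity directly is the main simplification I would aim for: acyclicity is a global, cycle-based property that behaves badly under single-vertex moves, whereas ``maximum internal degree at most $1$'' is a local property that can be tracked vertex by vertex. Note that the non-list, non-equitable form of this target is exactly Lov\'asz's improper-colouring bound, since $G$ decomposes into $k$ parts of maximum degree $\le1$ precisely when $2k\ge\Delta(G)+1$, which is our choice of $k$.

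First I would fix $k=\lceil(\Delta(G)+1)/2\rceil$, write $n=|V(G)|$ and a $k$-uniform list assignment $L$, and among all $L$-colourings consider those minimising a potential that lexicographically prioritises, first, the number of monochromatic edges (edges joining two vertices of the same colour) and, second, the size imbalance $\sum_i\bigl(|C_i|-n/k\bigr)^2$ of the classes $C_1,\dots,C_k$. The Chartrand--Kronk--Wall counting argument adapts to lists: a vertex $v$ has $\deg(v)\le\Delta\le2k-1$ neighbours, so if $v$ already has at least two neighbours in its current colour $c_0$, then at most $2k-3$ other neighbours remain, and if each of the $k-1$ remaining colours of $L(v)$ contained two of them we would need $2(k-1)=2k-2>2k-3$ such neighbours; hence some colour $c\in L(v)$ carries at most one neighbour of $v$. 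Recolouring $v$ with $c$ strictly decreases the number of monochromatic edges, so at a minimiser of the first coordinate every vertex has at most one same-coloured neighbour, which is exactly the defect-$1$ condition.

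The hard part, and the reason the general conjecture remains open while the authors settle only structured instances, is to secure \emph{equitability} at the same time: the balance-reducing moves needed for the second coordinate typically increase the monochromatic-edge count and so are blocked by the first, higher-priority coordinate. To break this deadlock I would import the Hajnal--Szemer\'edi / Kierstead--Kostochka rotation strategy, sending a vertex along an augmenting sequence of colour classes to rebalance sizes, but performing each step only through colours lying in the moved vertex's list and only when it keeps every internal degree at most $1$. Two difficulties stand in the way. Lists sharply restrict which class each vertex may enter, so the augmenting structures that exist freely in the list-free setting may be blocked, and their existence must be extracted from the pigeonhole slack $2k-1-\deg(v)$ rather than assumed; and, as the excerpt stresses, equitable colouring is \emph{not} monotone in $k$, so one cannot retreat to a larger, more comfortable number of colours to gain room, and every move must stay at exactly $k$. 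I expect the decisive step to be designing a single potential whose local minima are simultaneously defect-$1$ and equitable, together with a proof that every non-minimal configuration admits a list-respecting augmenting move — precisely the kind of tool the paper announces before specialising it to $d$-dimensional grids.
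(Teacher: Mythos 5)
This statement is Zhang's conjecture, which the paper does \emph{not} prove: it is stated as an open problem, and the paper's contribution is only to confirm it for special classes (graphs of arboricity $2$ in Theorem \ref{thm:arboricity_two}, and $d$-dimensional grids with $d\in\{2,3,4\}$ via the covering and partition lemmas). So your proposal should be judged as an attempt at the open problem, and it has a genuine gap. The first half is fine but only reproves what is already known: the list version of the Chartrand--Kronk--Wall counting argument (if $v$ has two same-coloured neighbours, pigeonhole over the $k-1$ remaining list colours finds one carrying at most one neighbour, and recolouring strictly decreases the number of monochromatic edges) yields an $L$-colouring whose classes have maximum degree at most $1$, hence $\rho_l(G)\le\lceil(\Delta(G)+1)/2\rceil$. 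That is the \emph{non-equitable} bound, essentially the result of Borowiecki, Drgas-Burchardt, and Mih\'ok cited in the introduction. The entire content of the conjecture beyond this is equitability, and there your argument stops being a proof: the sentence ``I expect the decisive step to be designing a single potential whose local minima are simultaneously defect-$1$ and equitable'' names the difficulty rather than resolving it. The rotation/augmenting strategies you invoke (Hajnal--Szemer\'edi, Kierstead--Kostochka) operate with $k\ge\Delta+1$ colours, where every vertex always has a colour class containing none of its neighbours; at $k\approx\Delta/2$ the pigeonhole slack $2k-1-\deg(v)$ can be zero, a vertex's list may exclude every class it would need to enter, and no one knows how to make the rebalancing moves go through. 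You cannot claim the conjecture without closing exactly this step.

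A secondary concern: your reduction replaces ``acyclic classes'' by the stronger requirement ``classes of maximum degree at most $1$.'' The non-equitable versions of these two targets happen to coincide in strength at $k=\lceil(\Delta+1)/2\rceil$, but the \emph{equitable list} defect-$1$ statement is a strictly stronger assertion than Zhang's conjecture, and you give no reason to believe it is true. Acyclic classes can absorb large induced trees, which gives real flexibility when balancing class sizes; degree-$\le1$ classes cannot. So even granting a complete potential-function argument, you would need to either justify that the strengthened statement holds or weaken the invariant back to acyclicity, which your local, vertex-by-vertex exchange argument was specifically designed to avoid. By contrast, the paper sidesteps both issues in its special cases: Lemma \ref{lem:covering} transfers \emph{equitable choosability} of a cycle-covering subgraph $H$ (where classes are independent sets, and known equitable-choosability machinery applies) to equitable list arboricity of $G$, and Lemmas \ref{lem:vertex_partition} and \ref{lem:vertex_partition_arboricity} build the equitable partition explicitly from a structured vertex ordering --- tools that exploit grid structure precisely because no general argument of the kind you sketch is available.
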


\begin{conjecture}[\cite{zhang}]
If $k\in {\mathbb N}$ and  $k \geq \lceil (\Delta(G)+1)/2 \rceil$ then every graph $G$ is equitably $k$-list  arborable. \label{con2}
\end{conjecture}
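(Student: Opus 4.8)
The plan is to prove something formally stronger and structurally cleaner than bare acyclicity: I would aim for an equitable $L$-colouring in which every colour class induces a subgraph of maximum degree at most $1$ (a matching), which is automatically a forest. By the classical partition theorem of Lovász, a graph of maximum degree $\Delta$ splits into $\lceil(\Delta+1)/2\rceil$ parts each of maximum degree at most $1$, so the threshold $k\ge\lceil(\Delta(G)+1)/2\rceil$ is exactly the value at which such matching-type classes first become available in the ordinary (non-list, non-equitable) regime. Proving the conjecture then reduces to upgrading that partition so as to simultaneously respect the lists and the equitability ceiling $\lceil|V(G)|/k\rceil$ on class sizes.

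The backbone would be an induction on $|V(G)|$, following the augmentation philosophy that Kostochka, Pelsmajer and West used for equitable choosability. Having fixed $k\ge\lceil(\Delta(G)+1)/2\rceil$ and a $k$-uniform list assignment $L$, I would delete a carefully chosen vertex $v$ (of small degree, or sitting in a sparse local configuration), apply the inductive hypothesis to $G-v$ with the induced lists to obtain an equitable matching-type $L$-colouring, and then reinsert $v$. I would first try to give $v$ a colour $c\in L(v)$ keeping class $c$ a matching, i.e.\ with at most one $c$-coloured neighbour of $v$ and that neighbour having no other $c$-coloured neighbour; the threshold on $k$ is precisely what makes such a colour plausibly available, mirroring the Lovász count. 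When every admissible colour lies in a class already at the equitability ceiling, the second step is a rebalancing move.

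The hard part will be designing rebalancing swaps that preserve both constraints at once. In the proper-colouring world one reshuffles along a Kempe chain between an overfull and an underfull class; here every swap must be \emph{cycle-safe}, since exchanging colours along a path can fuse two trees of a class into a monochromatic cycle and so destroy arboricity. Restricting the invariant to maximum-degree-$\le 1$ classes is what makes this tractable: a swap then interacts only with the one or two matching edges incident to the moved vertex, so its effect is local and can be checked directly. I would formalise an augmenting structure — an alternating sequence between an overfull class $a$ and an underfull class $b$, all of whose colours lie in the relevant vertices' lists — and argue by a Hall-type or potential-function count that such a structure always exists; shifting one vertex along it lowers the maximum class size while keeping every class a matching and every vertex on its list.

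Two further obstacles must be dispatched. First, because equitable colouring is \emph{not} monotone in $k$, as noted above, the argument cannot reduce large $k$ to the threshold value, so both the induction and the augmenting count must be carried out uniformly for every $k\ge\lceil(\Delta(G)+1)/2\rceil$; fortunately the ceiling $\lceil|V(G)|/k\rceil$ only shrinks as $k$ grows, tightening the balance condition while loosening the forest condition, and I would treat the large-$k$ regime (tiny classes) separately by a direct system-of-distinct-representatives argument. Second, exactly as in the Borowiecki–Drgas-Burchardt–Mih\'ok list-arboricity bound, I anticipate a small family of exceptional graphs — odd cycles and odd complete graphs, where the parity in $\lceil(\Delta+1)/2\rceil$ is sharp — that resist the uniform argument and must be verified by hand. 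Isolating and clearing these exceptions, while guaranteeing that a cycle-safe augmenting structure exists for every admissible list, is where I expect the genuine difficulty to concentrate.
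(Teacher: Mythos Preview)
This statement is \emph{Conjecture~\ref{con2}} in the paper, not a theorem: the paper does not prove it. The paper's contribution is to confirm the conjecture for $d$-dimensional grids with $d\in\{2,3,4\}$ and to give partial bounds for larger $d$, using entirely different tools --- the covering lemma (Lemma~\ref{lem:covering}) reducing to equitable choosability of a cycle-covering subgraph, and the special $k$-partition machinery of Lemma~\ref{lem:vertex_partition_arboricity}. There is no ``paper's own proof'' to compare against.

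What you have written is a research programme, not a proof, and you say so yourself: ``the hard part will be designing rebalancing swaps\ldots'', ``I would argue by a Hall-type or potential-function count that such a structure always exists'', ``isolating and clearing these exceptions\ldots is where I expect the genuine difficulty to concentrate''. None of these steps is carried out. The analogous statement for proper colouring --- that every graph is equitably $k$-choosable for $k\ge\Delta(G)+1$ --- is Conjecture~\ref{con0} of Kostochka--Pelsmajer--West and is still open; your plan would need to succeed where a decade of work on the sister problem has not. Moreover, you propose to prove the strictly stronger assertion that the colour classes can be taken to have maximum degree at most~$1$; even granting the original conjecture, there is no reason to believe this sharpening survives the combination of list and equitability constraints, and aiming for it may well be aiming for a false statement. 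Until the augmenting structure is actually constructed and the existence count is actually verified, this is a sketch of how one might attack an open problem, not a proof.
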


Zhang \cite{zhang} confirmed above two conjectures for complete graphs, 2-degenerate graphs, 3-degenerate claw-free graphs with maximum degree at least 4, and planar graphs with maximum degree at least $8$. 
Our results confirm above conjectures for some Cartesian products of paths, i.e. for some grids. 

Given two graphs $G_1$ and $G_2$, the \emph{Cartesian product} of $G_1$ and $G_2$, denoted by $G_1 \square G_2$, is defined to be a graph  whose vertex set is
$V(G_1) \times V(G_2)$ and edge set consists of all the edges joining vertices $(x_1,y_1)$ and $(x_2,y_2)$ when either $x_1=x_2$ and $y_1y_2 \in E(G_2)$ or $y_1=y_2$ and $x_1x_2\in E(G_1)$. Note that the Cartesian product is commutative and associtive. Hence the graph $G_1 \square \cdots \square G_d$ is unambiguously defined for any $d \in \mathbb{N}$. Let $P_n$ denote a path on $n$ vertices. 
Notice that when $G=G_1 \square \cdots \square G_d$ and each of the factors $G_i$ of $G$ is $P_2$ then $G$ is a $d$-\emph{dimensional hypercube}. Similarly, when each of the factors $G_i$ is a path on at least two vertices then $G$ is a $d$-\emph{dimensional grid} (cf. Fig.~\ref{p34}). By \emph{grids} we mean the class of all $d$-dimensional grids taken over all $d\in \mathbb{N}$.


\begin{figure}[htb]
\begin{center}
\includegraphics[scale=1]{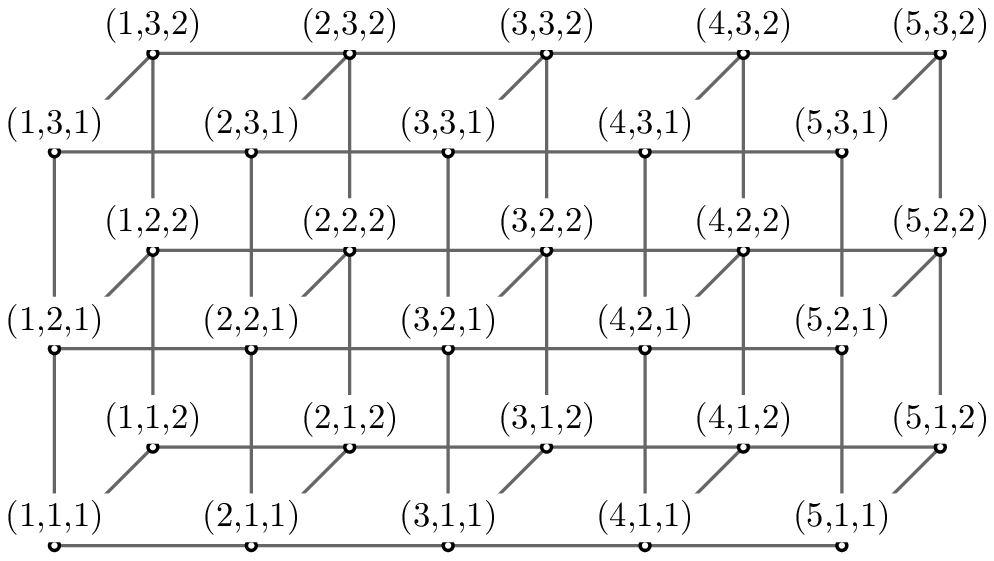}
\caption{3-dimensional grid $P_5 \square P_3 \square P_2$.}
\label{p34}
\end{center}
\end{figure}


Nakprasit and Nakprasit \cite{nakp} proved that the problem of equitable vertex arboricity is NP-hard. Thus the problem of equitable list vertex arboricity cannot be easier.  We are interested in determining polynomially solvable cases.  
We will use the following known lemmas. By $N_G(x)$ we denote \emph{neighborhood} of a vertex $x$ in $G$, i.e. the set of adjacent vertices to $x$.



\begin{lemma}[\cite{KoPeWe03,Pe04}]
\label{lem:Kostochka-Pelsmajer-West_2}
Let $k\in \mathbb{N}$ and $S=\{x_1,\ldots, x_k\}$, where $x_1,\ldots,x_k$ are distinct vertices of $G$. If $G-S$ is equitably $k$-choosable 
and 
\begin{equation}\label{ineq}
|N_G(x_i) \backslash S| \leq i-1
\end{equation}
holds for every $i\in [k]$ then $G$ is equitably  $k$-choosable.
\end{lemma}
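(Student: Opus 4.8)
The plan is to take an arbitrary $k$-uniform list assignment $L$ for $G$ and produce an equitable proper $L$-colouring by first colouring $G-S$ and then extending the colouring to the vertices of $S$. Restricting $L$ to $V(G-S)$ yields a $k$-uniform list assignment for $G-S$, so the hypothesis that $G-S$ is equitably $k$-choosable gives a proper $L$-colouring $c'$ of $G-S$ in which every colour class has cardinality at most $\lceil |V(G-S)|/k\rceil$. The whole argument then reduces to extending $c'$ to $x_1,\ldots,x_k$ so that the result is proper and each of the $k$ colours receives exactly one vertex of $S$.

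The reason the ``exactly one vertex per colour'' requirement is the right target is a counting one. Since $|V(G-S)|=|V(G)|-k$, each colour class of $c'$ has size at most
\begin{equation*}
\left\lceil \frac{|V(G)|-k}{k}\right\rceil=\left\lceil \frac{|V(G)|}{k}\right\rceil-1.
\end{equation*}
If the extension assigns the $k$ vertices of $S$ bijectively to the $k$ colours, then every colour class grows by exactly one, so its final cardinality is at most $\lceil |V(G)|/k\rceil$, which is precisely the equitability bound for $G$. Thus it suffices to colour $x_1,\ldots,x_k$ with pairwise distinct colours drawn from their lists in a way that creates no monochromatic edge.

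Here the ordering of $S$ is the crux, and I would colour the vertices in the reverse order $x_k,x_{k-1},\ldots,x_1$. When it is time to colour $x_i$, all of $G-S$ has already been coloured by $c'$ and the vertices $x_{i+1},\ldots,x_k$ have received $k-i$ pairwise distinct colours. To keep the colouring proper and the colours on $S$ distinct, the colour chosen for $x_i$ must avoid the colours of its neighbours in $G-S$, of which there are at most $i-1$ by~(\ref{ineq}), and the $k-i$ colours already used on $S$. This forbids at most $(i-1)+(k-i)=k-1$ colours, so at least one colour of the $k$-element list $L(x_i)$ remains available. Distinct colours on $S$ automatically rule out monochromatic edges inside $G[S]$, while the avoidance condition rules out monochromatic edges between $S$ and $G-S$; together with the properness of $c'$ this makes the final colouring proper.

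The only subtle point, and the step I expect to matter most, is the choice of the reverse ordering. Processing the vertices in the order $x_1,\ldots,x_k$ would force $x_i$ to avoid up to $(i-1)$ neighbour colours together with $i-1$ previously used colours, a total of $2(i-1)$, which may exceed $k-1$ for large $i$. Reversing the order balances the growing ``distinctness'' obligation $k-i$ against the shrinking neighbourhood bound $i-1$ so that their sum never exceeds $k-1$, guaranteeing an available colour at every step and completing the extension.
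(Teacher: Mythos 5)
Your proposal is correct and is essentially the standard argument: the paper cites this lemma from \cite{KoPeWe03,Pe04} rather than reproving it, but the very same technique (colouring $x_k,x_{k-1},\ldots,x_1$ in decreasing order of index, each vertex avoiding at most $(i-1)+(k-i)=k-1$ colours, so that $S$ becomes rainbow and each colour class grows by at most one above the bound $\lceil |V(G)|/k\rceil-1$ for $G-S$) is exactly what the paper itself deploys inside its proof of Lemma \ref{lem:vertex_partition}. The counting step $\lceil (|V(G)|-k)/k\rceil=\lceil |V(G)|/k\rceil-1$ and the reverse-order colouring are both handled correctly, so there is nothing to fix.
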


\begin{lemma}[\cite{zhang}]\label{lm}
Let $k\in \mathbb{N}$ and $S=\{x_1,\ldots, x_k\}$, where $x_1,\ldots,x_k$ are distinct vertices of $G$. If $G- S$ is equitably $k$-list  arborable 
and 
\begin{equation}
|N_G(x_i) \backslash S| \leq 2i-1 \label{eqlm}
\end{equation}
holds for every $i\in [k]$ then $G$ is equitably $k$-list arborable.
\end{lemma}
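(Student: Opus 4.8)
The plan is to mimic the strategy behind the equitable choosability statement in Lemma~\ref{lem:Kostochka-Pelsmajer-West_2}, adapting the counting to reflect that here one must avoid monochromatic \emph{cycles} rather than monochromatic \emph{edges}; this is exactly what the weaker hypothesis $|N_G(x_i)\setminus S|\le 2i-1$ (instead of $\le i-1$) is designed to accommodate. Fix an arbitrary $k$-uniform list assignment $L$ for $G$ and set $n=|V(G)|$. Restricting $L$ to $G-S$ and using the assumption that $G-S$ is equitably $k$-list arborable, I would first obtain an $L$-colouring of $G-S$ in which every colour class induces a forest and has at most $\lceil (n-k)/k\rceil$ vertices. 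Since $|S|=k$, a short computation gives $\lceil (n-k)/k\rceil=\lceil n/k\rceil-1$, so every colour class currently has at most $\lceil n/k\rceil-1$ vertices; this is the slack that will let me absorb the vertices of $S$.

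It then remains to extend the colouring to $x_1,\dots,x_k$. I would process these vertices in the reverse order $x_k,x_{k-1},\dots,x_1$ and assign to each $x_i$ a colour $c_i\in L(x_i)$ subject to two constraints: (i) $c_i$ differs from all colours $c_{i+1},\dots,c_k$ already used on $S$, and (ii) giving $x_i$ the colour $c_i$ creates no monochromatic cycle. Constraint~(i) forces the colours on $S$ to be pairwise distinct, so each colour class receives at most one vertex of $S$; combined with the bound from the previous paragraph this keeps every class within $\lceil n/k\rceil$ and preserves equitability. Distinctness also has a second, crucial consequence: each colour class contains at most one vertex of $S$, so a monochromatic cycle through a newly coloured $x_i$ would have to use two neighbours of $x_i$ lying in the already acyclic class inside $G-S$.

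The heart of the argument is the colour count for a fixed $x_i$. When $x_i$ is processed, exactly $k-i$ colours are excluded by constraint~(i). For constraint~(ii), observe that adding $x_i$ to a class induced on a forest can create a cycle only if $x_i$ has at least two neighbours of that colour inside $N_G(x_i)\setminus S$; hence the colours forbidden by~(ii) form a subset of those appearing at least twice among the neighbours of $x_i$ outside $S$, whose number is at most $\lfloor |N_G(x_i)\setminus S|/2\rfloor \le \lfloor (2i-1)/2\rfloor = i-1$. Altogether at most $(k-i)+(i-1)=k-1$ colours of the $k$-element list $L(x_i)$ are forbidden, so a legal choice for $c_i$ always exists and the greedy extension never gets stuck. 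I expect the main obstacle to be precisely this final count: pinning down that a single added vertex can destroy acyclicity in at most $\lfloor(2i-1)/2\rfloor$ classes (the step where the factor of two, and thus the $2i-1$ in the hypothesis, genuinely enters), together with the bookkeeping that the distinctness of the colours on $S$ is what simultaneously secures equitability and confines each class to at most one vertex of $S$.
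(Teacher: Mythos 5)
The paper never proves this lemma at all --- it is quoted directly from \cite{zhang} --- so there is no internal proof to compare against; judged on its own merits, your argument is correct and is in fact the standard one (essentially Zhang's original proof, mirroring the Kostochka--Pelsmajer--West proof of Lemma~\ref{lem:Kostochka-Pelsmajer-West_2}): colour $G-S$ equitably from the restricted lists, then extend greedily in the order $x_k,\dots,x_1$, keeping the colours on $S$ pairwise distinct and using the count $(k-i)+\lfloor (2i-1)/2\rfloor\le k-1$ to find an admissible colour, with $\lceil (n-k)/k\rceil=\lceil n/k\rceil-1$ plus at most one $S$-vertex per class giving equitability. The one phrasing to tighten is your claim that the colours forbidden by constraint (ii) all appear at least twice among $N_G(x_i)\setminus S$: literally this is guaranteed only for colours not already excluded by constraint (i) (a colour $c_j$ with $x_j\in N_G(x_i)$ could create a cycle while appearing once outside $S$), but since your final count takes the union of the two forbidden sets --- $k-i$ colours from (i), at most $i-1$ further colours from (ii) --- the argument goes through exactly as you wrote it.
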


In this paper we investigate the problem of equitable list vertex arboricity of graphs. The remainder of the paper is organized as follows.
In Section \ref{lem:generalization} we generalize Lemmas \ref{lem:Kostochka-Pelsmajer-West_2} and \ref{lm} in such a way that their new versions guarantee the continuity of the equitable choosability and equitable list vertex arboricity of graphs. We give also a new tool using the equitable choosability of a subgraph $H$ covering graph $G$ (Lemma \ref{lem:covering}). These tools (Lemmas \ref{lem:vertex_partition}, \ref{lem:vertex_partition_arboricity}, and \ref{lem:covering}) lead to new bounds on $\rho_l^=(G)$, 
for any graph $G$.
Since the new tool uses the notation of equitable choosability we dedicate Section \ref{colouring-grids} to this notation for some graphs related to grids. Finally, we apply all the lemmas to confirm the correctness of Zhang's conjectures for $d$-dimensional grids, $d\in \{2,3,4\}$, 
and to give new bounds on $\rho_l^=(G)$ for $d$-dimensional grids with $d\geq 5$ (Section \ref{sec:arboricity_grids}). We conclude the paper with posing some new conjectures concerning equitable list vertex arboricity of graphs.


\section{Some auxiliary tools and general bounds on $\rho_l^=(G)$}
\label{lem:generalization}

In the literature a lot of proofs of results on  equitable choosability are done by induction on the number of vertices of a graph and by usage of Lemma \ref{lem:Kostochka-Pelsmajer-West_2}. It means, to show that $G$ is equitably $k$-choosable, the set $S\subseteq V(G)$ that fulfills the inequality (\ref{ineq}) is determined and next the induction hypothesis is applied to the graph $G-S$.  Repeated application of this approach defines  a  partition $S_1\cup\cdots \cup S_{\eta +1}$ of $V(G)$ such that the following both conditions hold.  

\begin{itemize}
\item $|S_1|\le k$ and $|S_{j}|=k$ for $j\in[2,\eta+1]$;
\item for each $j\in[2,\eta+1]$ there is an ordering of vertices of $S_j$, say $x_1^j,\ldots,x_k^j$, that fulfills the inequality $|N_G(x_i^j) \cap (S_1\cup\cdots \cup S_{j-1})| \leq i-1 $ for every $i\in[k]$.
\end{itemize}

\noindent In this section we prove that if $G$ has such a partition then $G$ is not only equitably $k$-choosable but also is equitably $t$-choosable for every $t\in {\mathbb N}$ satisfying $t\ge k$. Next, we observe that the similar result for a graph to be equitably $k$-list arborable can be formulated.

Let $k\in \mathbb{N}$. A $k$-{\it partition} of a graph $G$ is a partition of the vertex set of $G$ into $\left\lceil |V(G)|/k\right\rceil $  sets. The $k$-partition is {\it special} if all sets of the $k$-partition, except at most one, have $k$ elements. Let $G$ be a graph and $c$ be its vertex colouring (not necessarily proper). A set $S\subseteq V(G)$ is {\it rainbow} in the coloured graph $(G,c)$ if all vertices in $S$ are coloured differently. A $k$-partition of the coloured graph $(G,c)$ is {\it rainbow} if every set of the $k$-partition is rainbow. It is easy to see the following fact.

\begin{observation}
\label{obs:rainbow-bounded}
Let $k\in {\mathbb N}$ and  $(G,c)$ be a coloured graph. If there is a rainbow $k$-partition of  $(G,c)$ then each colour appears on at most $\left\lceil |V(G)|/k\right\rceil$ vertices of $G$.
\end{observation}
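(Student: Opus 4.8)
The plan is to fix an arbitrary colour and bound the number of vertices receiving it by examining each part of the rainbow $k$-partition separately. Let $P_1,\ldots,P_m$ denote the sets of the given rainbow $k$-partition of $(G,c)$, where by the definition of a $k$-partition $m=\lceil |V(G)|/k\rceil$.

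First I would fix a colour $\alpha\in{\mathbb N}$ and consider, for each $j\in[m]$, the restriction of $c$ to the part $P_j$. Since the $k$-partition is rainbow, every $P_j$ is a rainbow set, so all of its vertices receive pairwise distinct colours. In particular the colour $\alpha$ is used on at most one vertex of $P_j$.

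Next I would sum these local bounds over all parts. Because $P_1,\ldots,P_m$ partition $V(G)$, every vertex coloured $\alpha$ lies in exactly one part $P_j$, and each part contributes at most one such vertex. Hence the total number of vertices of $G$ coloured $\alpha$ is at most $m=\lceil |V(G)|/k\rceil$. As $\alpha$ was arbitrary, this yields the claimed bound for every colour.

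There is no genuine obstacle here: the argument is a direct pigeonhole count, and the only point to state precisely is that the rainbow property of a single part forbids a colour from occurring twice within that part. The observation then follows simply by adding the per-part bounds across the $\lceil |V(G)|/k\rceil$ parts.
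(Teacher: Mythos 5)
Your proof is correct and is exactly the intended argument: the paper states this observation without proof (merely "it is easy to see"), and the natural justification is precisely your pigeonhole count, since each of the $\left\lceil |V(G)|/k\right\rceil$ rainbow parts can contain a given colour at most once.
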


\begin{lemma}
\label{lem:rainbow-colourable}
Let $k\in {\mathbb N}$. A graph $G$ is equitably  $k$-choosable if and only if for every $k$-uniform list assignment $L$ there is a proper $L$-colouring $c$ of $G$ such that $(G,c)$ has a rainbow $k$-partition.
\end{lemma}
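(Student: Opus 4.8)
=== PROOF PROPOSAL ===

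\par\noindent\textbf{Proof proposal for Lemma~\ref{lem:rainbow-colourable}.}

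The plan is to prove both implications directly, relying on Observation~\ref{obs:rainbow-bounded} for the easy direction and on a constructive grouping argument for the harder one. The statement characterises equitable $k$-choosability in terms of the existence of a rainbow $k$-partition, so I would separate the two directions of the biconditional and treat them in turn.

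For the reverse implication (if such a rainbow $k$-partition always exists, then $G$ is equitably $k$-choosable), I would argue as follows. Fix an arbitrary $k$-uniform list assignment $L$. By hypothesis there is a proper $L$-colouring $c$ of $G$ such that $(G,c)$ admits a rainbow $k$-partition. Since $c$ is a proper $L$-colouring, it remains only to verify that $c$ is equitable, i.e.\ that each colour class has cardinality at most $\lceil |V(G)|/k\rceil$. But this is exactly the conclusion of Observation~\ref{obs:rainbow-bounded} applied to $(G,c)$. Hence $c$ is an equitable proper $L$-colouring of $G$, and since $L$ was arbitrary, $G$ is equitably $k$-choosable. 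This direction is essentially immediate once the observation is in hand.

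The forward implication is the main obstacle. Here I assume $G$ is equitably $k$-choosable and must produce, for every $k$-uniform $L$, a proper $L$-colouring $c$ together with a rainbow $k$-partition of $(G,c)$. Fixing $L$, equitable $k$-choosability gives a proper $L$-colouring $c$ in which each colour class $C_1,\ldots,C_m$ (where $C_j$ is the set of vertices receiving colour $j$) satisfies $|C_j|\le \lceil |V(G)|/k\rceil$. The task reduces to a purely combinatorial packing claim: whenever we have sets (the colour classes) each of size at most $\lceil |V(G)|/k\rceil$ that partition a ground set of size $|V(G)|$, we can distribute the elements into $\lceil |V(G)|/k\rceil$ groups so that each group contains at most one element of each class (making every group rainbow) and every group has size at most $k$. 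I expect the cleanest way to see this is to think of it as edge-colouring or as a bipartite assignment: arrange the elements of each colour class in a row, and assign the $r$-th element of a class (for $r=1,2,\ldots$) to group $r$. Since no class has more than $\lceil |V(G)|/k\rceil$ elements, every element lands in one of the groups $1,\ldots,\lceil |V(G)|/k\rceil$, and by construction distinct elements of the same class go to distinct groups, so each group is rainbow.

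The one point requiring care is the size bound: I must check that each of the $\lceil |V(G)|/k\rceil$ groups so formed receives at most $k$ elements. The total number of elements is $|V(G)|$ distributed over $\lceil |V(G)|/k\rceil$ groups, so the average group size is at most $k$, but averaging alone does not bound the maximum. To get the uniform bound I would fill the groups greedily in a balanced (round-robin) fashion rather than naively: process the colour classes one at a time, and within each class place its successive elements into the currently least-loaded groups, always respecting the ``at most one per class per group'' constraint. A short counting argument then shows no group can exceed $k$: if some group reached $k+1$ elements while another held fewer, the balanced placement would have diverted the surplus element, contradicting the procedure; alternatively one shows the loads stay within one of each other throughout, forcing the maximum to be at most $\lceil |V(G)|/k\rceil \cdot$(appropriate factor)$\,\le k$. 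Once every group is verified to be rainbow and of size at most $k$, the collection of groups is a rainbow $k$-partition of $(G,c)$, completing this direction and hence the proof.
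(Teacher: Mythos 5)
Your first direction (a rainbow $k$-partition for every $L$ implies equitable $k$-choosability, via Observation~\ref{obs:rainbow-bounded}) is correct and is exactly the paper's argument. The gap is in the other direction, and it is twofold. First, you are solving the wrong packing problem: by the paper's definition a $k$-partition is a partition of $V(G)$ into exactly $\lceil |V(G)|/k\rceil$ sets, with \emph{no} upper bound on the sizes of the parts, so the ``at most $k$ per group'' constraint that occupies your final paragraph is not required. What \emph{is} required, and what you never verify, is that your construction yields exactly $\lceil |V(G)|/k\rceil$ nonempty groups. Your dealing rule (send the $r$-th element of each colour class to group $r$) can fail this: take $|V(G)|=10$, $k=3$, so four parts are required, and suppose the equitable $L$-colouring handed to you has five colour classes of size $2$ (perfectly legal, since $2\le\lceil 10/3\rceil=4$); your rule then produces only two nonempty groups, of size $5$ each. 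Second, for the balanced (least-loaded) variant that is supposed to rescue this, the crucial claim is asserted rather than proved: ``a short counting argument then shows no group can exceed $k$'' and the bound ``$\lceil |V(G)|/k\rceil\cdot(\text{appropriate factor})\le k$'' are placeholders. The fact you actually need is an invariant, maintained by induction over the classes, that the group sizes stay within one of each other; with $M=\lceil |V(G)|/k\rceil$ this gives every size at most $\lceil |V(G)|/M\rceil\le k$ and at least $\lfloor |V(G)|/M\rfloor\ge 1$, which simultaneously settles the size bound and the nonemptiness of all $M$ parts. Without that invariant the proof does not close.

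The idea is salvageable, and in two ways. The cheapest repair drops the size bound entirely: after your dealing step every group is rainbow, and since any subset of a rainbow set is rainbow you may split groups until exactly $\lceil |V(G)|/k\rceil$ nonempty parts remain (possible because $|V(G)|\ge\lceil |V(G)|/k\rceil$). Alternatively, list the vertices so that vertices of the same colour are never close together and then cut or deal the list into $M$ groups; this arrangement trick is essentially what the paper uses in Lemma~\ref{lem:rainbow_partition}. It is worth noting that the paper proves this direction of the present lemma by a completely different, extremal exchange argument: among all partitions of $(G,c)$ into rainbow sets take one with the fewest parts, and among those one minimizing the smallest part; if the number of parts exceeded $\lceil |V(G)|/k\rceil$, a vertex of the smallest part could be moved into a part avoiding its colour, contradicting minimality. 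That route needs no balancing at all, which is precisely the bookkeeping your write-up leaves open.
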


\begin{proof}
Obviously, if for every $k$-uniform list assignment $L$ there is a proper $L$-colouring $c$ of $G$ such that $(G,c)$ has a rainbow $k$-partition then each colour class has the cardinality at most $\left\lceil |V(G)|/k\right\rceil$, by Observation \ref{obs:rainbow-bounded}. It means that this $L$-colouring $c$ is equitable, and hence $G$ is equitably $k$-choosable.

To prove the opposite implication, suppose that $G$ is equitably $k$-choosable and $L$ is a $k$-uniform list assignment for $G$. It follows that there is a proper $L$-colouring $c$ of $G$ such that each colour class has at most $\left\lceil |V(G)|/k\right\rceil$ elements. Let $|V(G)|=\eta k+r$, where $\eta\in \mathbb{N}_0, \; r\in [k]$. Thus $\eta+1=\left\lceil |V(G)|/k\right\rceil$, and so  each colour class contains at most $\eta+1$ vertices.
Assume, on the contrary, that there is no rainbow $k$-partition of $(G,c)$. Among all partitions of $(G,c)$ into rainbow sets,  let $V_1\cup \cdots \cup V_{t}$ be one with the smallest $t$. Since there is no rainbow $k$-partition, we have $t>\eta+1$. 
Without loss of generality, we may assume that  $V_1\cup \cdots \cup V_{t}$ is the rainbow partition with $|V_1|\leq \cdots \leq |V_t|$ and with the minimum cardinality of $V_1$. Let  $|V_1|=s$ and $x\in V_1$. Since we have at most $\eta+1$ vertices coloured with $c(x)$ and $t>\eta+1$, there is a set $V_i$ such that $V_i\cup\{x\}$ is rainbow. If $s=1$ then $V_2\cup \cdots\cup (V_i\cup \{x\})\cup\cdots \cup V_{t}$ is the partition with less number of rainbow sets, a contradiction. If $s>1$ then we get the rainbow partition  $V_1\setminus \{x\}\cup \cdots\cup (V_i\cup \{x\})\cup\cdots \cup V_{t}$ that
contradicts with the minimum cardinality of $V_1$.

\end{proof}

\begin{lemma}
\label{lem:rainbow-arborable}
Let $k\in {\mathbb N}$. A graph $G$ is equitably $k$-list arborable if and only if for every $k$-uniform list assignment $L$ there is an $L$-colouring $c$ in which every colour class induces an acyclic graph and such that $(G,c)$ has a rainbow $k$-partition. 
\end{lemma}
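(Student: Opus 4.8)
The plan is to mirror the proof of Lemma~\ref{lem:rainbow-colourable} almost verbatim, replacing ``proper'' throughout by ``every colour class induces an acyclic graph.'' The key observation is that the argument of Lemma~\ref{lem:rainbow-colourable} never used properness except implicitly through the definition of equitable; the only structural fact about colourings it exploited is that a colour class has bounded cardinality. Hence the same bookkeeping works when the constraint on a colour class is acyclicity rather than independence.

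For the forward direction, suppose that for every $k$-uniform list assignment $L$ there is an $L$-colouring $c$ in which each colour class induces an acyclic graph and $(G,c)$ admits a rainbow $k$-partition. Then Observation~\ref{obs:rainbow-bounded} immediately gives that each colour appears on at most $\lceil |V(G)|/k\rceil$ vertices, so $c$ is equitable; combined with the acyclicity of each colour class, this witnesses that $G$ is equitably $k$-list arborable.

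For the converse, I would assume $G$ is equitably $k$-list arborable and fix a $k$-uniform list assignment $L$. By definition there is an $L$-colouring $c$ with every colour class acyclic and of cardinality at most $\lceil |V(G)|/k\rceil = \eta+1$, where $|V(G)|=\eta k + r$ with $\eta\in\mathbb{N}_0$, $r\in[k]$. The construction of a rainbow $k$-partition from this point on is purely combinatorial and depends only on the colouring $c$ together with the bound $\eta+1$ on each colour class: one takes a partition of $V(G)$ into the fewest possible rainbow sets, say $V_1\cup\cdots\cup V_t$ chosen with $|V_1|\le\cdots\le|V_t|$ and with $|V_1|$ minimum, argues that $t>\eta+1$ would force some $x\in V_1$ to have a colour shared by at most $\eta+1$ vertices while $t$ rainbow sets are present, so some $V_i$ with $i\ge 2$ contains no vertex of colour $c(x)$ and hence $V_i\cup\{x\}$ is rainbow, and then reaches a contradiction either by lowering $t$ (when $|V_1|=1$) or by lowering $|V_1|$ (when $|V_1|>1$). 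This is exactly the pigeonhole-and-minimality argument already written out in Lemma~\ref{lem:rainbow-colourable}.

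I do not anticipate a genuine obstacle here: the entire content of the statement is that the notion of a rainbow $k$-partition is a faithful reformulation of the equitability constraint, and that reformulation is orthogonal to whether the colour-class constraint is ``independent'' or ``acyclic.'' The only thing to be careful about is not to invoke any property of $c$ beyond ``each colour class has at most $\eta+1$ vertices,'' so that the partition argument transfers cleanly; accordingly I would simply note that the proof is identical to that of Lemma~\ref{lem:rainbow-colourable} with the acyclicity condition on colour classes replacing the properness condition, and that this condition is preserved since the rainbow $k$-partition argument does not alter the colouring $c$ itself.
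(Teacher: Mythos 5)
Your proposal is correct and matches the paper's own proof, which likewise just repeats the argument of Lemma~\ref{lem:rainbow-colourable} with acyclicity of colour classes substituted for properness, noting that the rainbow-partition argument depends only on the cardinality bound on colour classes. No gap here; your write-up is in fact more explicit than the paper's one-line reduction.
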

\begin{proof}
We repeat all the steps of the proof of Lemma \ref{lem:rainbow-colourable}, but in each case when we refer to the colouring $c$ of a graph $G$ we assume or state that each colour class in $c$ is acyclic instead of the assumption that $c$ is proper. Additionally, we substitute the notion of equitable $k$-choosability by the notion of equitable $k$-list arborability.

\end{proof}


\begin{lemma}
\label{lem:rainbow_partition}
Let $k\in {\mathbb N}$ and $(G,c)$ be a  coloured graph. If there is a rainbow special $k$-partition of  $(G,c)$ then there is also a rainbow special $x$-partition of $(G,c)$ for every integer $x$ such that $x\le k$.
\end{lemma}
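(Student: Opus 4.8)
The plan is to translate the combinatorial statement into a bipartite degree-realisation problem, so that the monotonicity in $x$ becomes a transparent majorisation comparison. First I would set $n=|V(G)|$ and record the colour multiplicities $a_1\ge a_2\ge\cdots\ge a_p$ of $c$, so $\sum_i a_i=n$. I would then observe that a rainbow partition of $(G,c)$ into parts of prescribed sizes $s_1,\dots,s_M$ is exactly a simple bipartite incidence graph between the colour classes and the parts in which colour $i$ has degree $a_i$ and part $j$ has degree $s_j$: from such a partition, join colour $i$ to part $j$ precisely when a vertex of colour $i$ lies in part $j$ (rainbowness forces at most one such vertex, so part $j$ meets exactly $s_j$ colours and colour $i$ meets exactly $a_i$ parts), and conversely from such a bipartite graph one distributes the $a_i$ vertices of colour $i$ one per incident part, filling each part $j$ with $s_j$ distinct colours. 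Under this dictionary a rainbow special $x$-partition is precisely a realisation of the part-degree sequence $\sigma_x=(x,\dots,x,r_x)$ with $r_x=n-(\lceil n/x\rceil-1)x$, against the fixed colour-degree sequence $(a_i)$.

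The crux is then a monotonicity statement. I would compute that the sorted partial sums of $\sigma_x$ are $S_x(t)=\min(tx,n)$, because the first $\lceil n/x\rceil-1$ parts have size $x$ and $(\lceil n/x\rceil-1)x<n$; padding $\sigma_k$ with zeros to length $\lceil n/x\rceil$ leaves its partial sums equal to $\min(tk,n)$. Since $x\le k$ gives $\min(tx,n)\le\min(tk,n)$ for every $t$, and both sequences total $n$, the sequence $\sigma_x$ is majorised by $\sigma_k$. It therefore suffices to prove that, for a fixed colour-degree sequence, realisability is preserved when the part-degree sequence is replaced by a majorised one; combined with the hypothesis that $\sigma_k$ is realisable, this yields realisability of $\sigma_x$, and hence the desired rainbow special $x$-partition.

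For that preservation step I would use Robin Hood transfers: every sequence majorised by $\sigma_k$ (padded) is reached from it by finitely many elementary steps that move one unit from a larger part $i$ to a smaller part $j$, and each such step can be realised by a single edge move. Since part $i$ has strictly larger degree than part $j$, it meets some colour $c$ that $j$ does not, and replacing the incidence $(c,i)$ by $(c,j)$ keeps the graph simple, preserves all colour degrees, and adjusts the two part degrees exactly as required. The main technical point to nail down carefully is precisely this edge move together with the bookkeeping of the dictionary: that transfers into the padded zero-parts correctly create the extra $\lceil n/x\rceil-\lceil n/k\rceil$ parts, and that the resulting incidence graph indeed recovers an honest partition of $V(G)$ into nonempty rainbow parts of the prescribed sizes.

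As an alternative one could instead invoke the Gale--Ryser theorem directly and verify its inequalities for $\sigma_x$ from those for $\sigma_k$; the simplification there is identical in spirit, namely that once $t$ reaches $\max_i a_i\le\lceil n/k\rceil$ the colour side saturates at $n$, so the only genuinely new constraints for smaller $x$ fall in this saturated range and read $tx<n$, which holds automatically for $t\le\lceil n/x\rceil-1$. Either way, the whole argument is driven by the single clean observation that the special profiles have partial sums $\min(t\cdot\text{size},n)$, which are monotone in the block size.
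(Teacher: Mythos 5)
Your proof is correct, but it takes a genuinely different route from the paper's. The paper argues directly and constructively: it lists the vertices so that $S_1$ comes first and every later vertex is placed so that its colour differs from the colours of the preceding $k-1$ vertices (possible precisely because each $S_j$ is rainbow), and then simply cuts the list into consecutive blocks of length $x\le k$; any two vertices in a block are within distance $x-1\le k-1$ in the list, so every block is rainbow, and the block sizes automatically yield a special $x$-partition. You instead abstract away the graph entirely and reduce the lemma to bipartite degree-sequence realisability: a rainbow partition with part sizes $(s_j)$ against colour multiplicities $(a_i)$ is the same thing as a simple bipartite graph with those degrees (rainbowness is exactly simplicity of the incidence graph), the special profiles have sorted partial sums $\min(t\cdot\mathrm{size},\,n)$, hence $\sigma_x\preceq\sigma_k$, and realisability is inherited along majorisation either by unit (Robin Hood) transfers, each realised as a single edge move justified by $|N(i)|>|N(j)|$, or alternatively by Gale--Ryser together with the fact that conjugation reverses majorisation. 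I see no gap: the dictionary is sound in both directions, the padding bookkeeping for the extra empty parts is handled, and the transfer lemma you invoke is classical. What the paper's proof buys is brevity and self-containedness --- no external machinery, and an explicit construction of the $x$-partition from the $k$-partition. What yours buys is generality and conceptual clarity: it makes plain that only the colour multiplicities matter, and it actually proves the stronger statement that realisability of any part-size profile implies realisability of every profile it majorises, of which the lemma is the special case $\sigma_x\preceq\sigma_k$.
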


\begin{proof}
\noindent Let $|V(G)|=\eta k+r_1$, where $\eta\in \mathbb{N}_0, \; r_1\in [k]$. Let $S_1\cup S_2\cup\cdots \cup S_{\eta +1}$ be a rainbow special $k$-partition of $(G,c)$ such that $|S_1|=r_1$ and $|S_i|=k$ for $i\in[2,\eta+1]$. We show that there is a rainbow special $x$-partition, for every $x \leq k$.

Arrange vertices of $G$ in the list in such a way that:
\begin{itemize}
\item vertices from $S_i$ are placed before vertices from $S_j$ for $i<j$,
\item vertices from $S_1$ are placed in any order at the top of the list,
\item each vertex from $S_i$, for $i>1$, is placed in the list in such a way that its colour is different from the colours of $k-1$ previous vertices in the list or its colour is different from the colours of all previous vertices in the list, if the number of previous vertices is smaller than $k-1$.
\end{itemize}
Since sets $S_i$ are rainbow, for every $i$, then the above described arrangement of vertices is possible. Assume that $(v_1, v_2, \dots, v_{|V(G)|})$ is the list of vertices created in such a way. 
Let $|V(G)|=\beta x+r_2$, where $\beta\in \mathbb{N}_0, \; r_2\in [x]$. 

Sets $R_i = \{ v_{(i-1)\cdot x+1}, \dots, v_{i\cdot x} \}$, for $1 \leq i \leq \beta$ and
$R_{\beta+1}=\{ v_{\beta\cdot x+1}, \dots, v_{|V(G)|} \}$ form an $x$-partition. It is easy to see that this partition is rainbow and special.
\end{proof}

\begin{lemma}
\label{lem:vertex_partition}
Let $k\in {\mathbb N}$. If a graph $G$ has a special $k$-partition $S_1\cup \cdots \cup S_{\eta+1}$ such that $|S_1|\le k$ and $|S_j|=k$ for $j\in[2 , \eta+1]$, moreover, if for every $j\in[2 , \eta+1]$ there is an ordering $x_1^j,\ldots ,x_k^j$ of vertices of the set $S_j$ that for every $i\in[k]$ the inequality
\begin{equation}
|N_G(x_i^j) \cap (S_1\cup\cdots \cup S_{j-1})| \leq i-1,  \label{ineq_one}
\end{equation}

\noindent is fullfilled then $G$ is equitably   $t$-choosable for every integer $t$ satisfying $t\ge k$.
\end{lemma}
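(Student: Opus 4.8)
The plan is to reduce the statement to a colouring-with-partition problem and then prove it by induction on $\eta$. By Observation~\ref{obs:rainbow-bounded} it suffices to show that for every integer $t\ge k$ and every $t$-uniform list assignment $L$ the graph $G$ admits a proper $L$-colouring $c$ such that $(G,c)$ has a rainbow special $t$-partition; indeed such a partition has exactly $\lceil |V(G)|/t\rceil$ classes, so each colour is used at most $\lceil |V(G)|/t\rceil$ times and the colouring is equitable. I will prove this strengthened statement by induction on $\eta$, peeling off the last set $S_{\eta+1}$: the graph $G'=G-S_{\eta+1}$ inherits a partition of the same type with one fewer part, so the induction hypothesis yields a proper $L$-colouring $c'$ of $G'$ together with a rainbow special $t$-partition of $(G',c')$ having a unique deficient class $D$ (of some size $s$, all other classes of size $t$). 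It then remains to colour $S_{\eta+1}=\{x_1,\dots,x_k\}$ and to fit these $k$ vertices into the partition. Crucially, keeping the partition \emph{special} with exactly the right number of classes forces every colour to occur in at most that many classes, so the global balance condition is maintained automatically; I only have to control properness and the rainbow condition inside one class at a time.

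For the base case $\eta=0$ we have $V(G)=S_1$ with $|S_1|\le k\le t$; since each list has $t\ge|S_1|$ colours, Hall's theorem provides a system of distinct representatives, i.e. an injective $L$-colouring, and the single class $V(G)$ is then rainbow, proper and special. In the inductive step I colour $x_k,x_{k-1},\dots,x_1$ in the reverse of the given order, so that when $x_i$ is coloured its already coloured neighbours are its at most $i-1$ neighbours in $G'$ (by inequality~(\ref{ineq_one})) together with some of $x_{i+1},\dots,x_k$. If the new vertices fit into the deficient class, that is $s+k\le t$ (or if $G'$ is perfectly balanced, so that a brand new class must be opened for all of $S_{\eta+1}$), the insertion is easy: placing every $x_i$ into the target class and forbidding the colours of its current members together with the colours of its $G'$-neighbours rules out at most $s+k-1\le t-1$ (respectively $k-1$) colours, so a colour in $L(x_i)$ is always available, and the resulting partition is again special with the correct number of classes.

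The main obstacle is the overflow case $1\le s<t$ and $s+k>t$, where the deficient class cannot absorb all of $S_{\eta+1}$ and the number of classes must grow by one. A direct attempt to first fill $D$ and then spill the remaining new vertices into a fresh class fails, because completing $D$ forces each newly inserted vertex to avoid all $s$ fixed colours already present in $D$, and $s+k-1$ can exceed $t-1$. The device that I expect to save the argument is to reassign the already coloured vertices of $D$: keep only $t-k$ of the old vertices of $D$ in the class $D^{*}$ that will be completed (this is where $t\ge k$ is used), move the remaining $s-(t-k)=s+k-t\ge1$ old vertices of $D$ into a newly opened class $E^{*}$ (no recolouring is needed, since a subset of a rainbow class is rainbow), and place all $k$ new vertices into $D^{*}$. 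Then, when $x_i$ is coloured, the colours it must avoid for the rainbow condition are those of the $t-k$ fixed old vertices plus the $k-i$ new vertices already placed, i.e. $t-i$ colours, while properness adds only its at most $i-1$ neighbours in $G'$; the total is at most $t-1$, so a colour is always free. This yields a full class $D^{*}$, a deficient class $E^{*}$ of size $s+k-t\le t$, and hence a rainbow special $t$-partition of $(G,c)$, completing the induction. I expect the verification that this reassignment keeps every class rainbow and the class count equal to $\lceil|V(G)|/t\rceil$ to be the only delicate piece of bookkeeping.
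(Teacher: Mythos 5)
Your proof is correct, and it takes a genuinely different route from the paper's. The paper's proof is a single-pass construction: writing $t=\gamma k+r$, $|V(G)|=\beta t+r_2$, it colours the sets $S_1,\ldots,S_{\eta+1}$ greedily in the given order (the same reverse-order greedy step you use, justified by inequality (\ref{ineq_one})), extracts an $r_2$-element set $R$, repartitions the initial segment into $r$-element rainbow pieces $T_1,\ldots,T_\beta$ using Lemma \ref{lem:rainbow_partition}, and then builds each full $t$-class as $T_i\cup H_i$, where $H_i$ is a block of $\gamma$ consecutive $k$-sets whose vertices are coloured after deleting the $\le r$ colours of $T_i$ from their lists; the arithmetic $t=\gamma k+r$ is what guarantees the greedy step still has $\ge k$ colours available. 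You instead induct on $\eta$, peeling off $S_{\eta+1}$ and inserting it into the rainbow special $t$-partition supplied by the induction hypothesis; the only genuine obstacle, the overflow case $s+k>t$, you resolve by splitting the deficient class --- keeping $t-k$ of its vertices, exporting the remaining $s+k-t\ge 1$ to a fresh class (subsets of rainbow classes stay rainbow, so no recolouring is needed), and filling the gap with the $k$ new vertices, giving the forbidden-colour count $(t-k)+(k-i)+(i-1)=t-1<t$ at each step. Both arguments reduce equitability to exhibiting a rainbow special $t$-partition (Lemma \ref{lem:rainbow-colourable} / Observation \ref{obs:rainbow-bounded}) and both exploit the degree inequality identically; what differs is how the $t$-classes are assembled. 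Your insertion-with-splitting argument is more elementary: it dispenses with Lemma \ref{lem:rainbow_partition} and with all the $\gamma,r,r_1,r_2$ bookkeeping, localising the balance argument into one local repair step whose class counts ($m$, or $m+1$ in the overflow and perfectly-balanced cases) match $\lceil |V(G)|/t\rceil$ exactly. What the paper's global construction buys is an explicit, non-inductive description of the final partition, which the authors then reuse almost verbatim to sketch the arboricity analogue (Lemma \ref{lem:vertex_partition_arboricity}); your argument would adapt to that setting as well, by replacing the properness constraint with the ``at most one repeated colour among neighbours'' constraint of Lemma \ref{lm}.
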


\begin{proof}
Let $k,t$ be fixed and $L$ be a $t$-uniform  list assignment for $G$. We show that there is a proper $L$-colouring $c$ of $G$  such that  the coloured graph $(G,c)$ has a  rainbow special $t$-partition. Since $L$ is chosen freely, it will follow that  $G$ is equitably  $t$-choosable, by Lemma \ref{lem:rainbow-colourable}. Let

\begin{itemize}
\item $|V(G)|=\eta k+r_1$, where $\eta,r_1$ are non-negative integers, $r_1\in [k]$,  and
\item $|V(G)|=\beta t+r_2$, where $\beta,r_2$ are non-negative integers, $r_2\in [t]$,  and  
\item $t=\gamma k+r$ where $\gamma,r$ are non-negative integers, $r\in [k]$.  
\end{itemize}
Thus $|V(G)|=\beta(\gamma k+r)+r_2=\beta\gamma k+\beta r+r_2$. We split  $V(G)$ into two subsets $V_1$ and $V_2$, where  $V_1=S_1\cup\cdots \cup S_{\eta+1-\beta\gamma }$ and $V_2=S_{\eta+1-(\beta\gamma-1)}\cup\cdots \cup S_{\eta+1}$. Observe that $|V_1|=\beta r+r_2$ and $|V_2|=\beta\gamma k$. First, we properly colour the vertices in  $V_1$, next we spread the colouring on $V_2$.  We colour vertices in each set $S_i$  of $V_1$ in such a way that we obtain a rainbow set. It is easy to see that we can colour vertices from $S_1$ such that we obtain a rainbow set, since   each vertex has assigned a list of length $t$ and $|S_1|=r_1\le t$. Next, we colour vertices $x_1^2,\ldots,x_k^2$ in $S_2$. We assign to $x_k^2$ a colour from its list that is not used  in $S_1$. Since $|N_G(x_k^2)\cap S_1|\le k-1$ and $|L(x_k)|=t\ge k$, this may be done. Next we assign to $x^2_{k-1},\ldots ,x^2_1$ (in the sequence) a colour from its list that is different from the ones assigned to the vertices with higher subscript and not used in $S_1$. All these steps may be completed since $|N(x_i^2)\cap S_1|\le i-1$ and $|L(x_i)|=t\ge k$. Similarly, we colour the vertices of each set $S_j\;(j\in[3 ,\eta+1-\beta\gamma ])$. Consider the   coloured subgraph $(G_1,c)$, where $G_1=G[V_1]$. Since each set $S_j\;(j\in[\eta+1-\beta\gamma] )$ is rainbow, we obtain a rainbow $k$-partition of $(G_1,c)$. 
If $r_2\le r_1$, we take $r_2$  vertices of $S_1$ and denote this set by $R$. Otherwise, we additionally choose  $r_2-r_1$ vertices from $S_2$ that have colours different than colours of vertices in $S_1$ and then these vertices together with $S_1$ form $R$. Observe that also $(G_1-R,c)$ has a rainbow $k$-partition. Furthermore, $|(S_1\cup\cdots \cup S_{\eta+1-\beta\gamma })\setminus R|=|V(G_1-R)|=\beta r$. By Lemma \ref{lem:rainbow_partition},  $G_1-R$ has a rainbow $r$-partition. Let $T_1,\ldots, T_{\beta }$ be a  rainbow $r$-partition of $(G_1-R,c)$. 

Now we colour the vertices in $V_2$. Recall that $|V_2|=\beta\gamma k$. Let us divide  $V_2$ into $\beta$ subsets, each containing $\gamma k$ sets $S_i$, in the following way:

$H_1=S_{\eta+1-(\beta\gamma -1)}\cup S_{\eta+1-(\beta\gamma -2)}\cup \cdots\cup S_{\eta+1-(\beta-1)\gamma }$

$H_2=S_{\eta+1-((\beta-1)\gamma -1)}\cup S_{\eta+1-((\beta-1)\gamma -2)}\cup \cdots\cup S_{\eta+1-(\beta-2)\gamma }$

$\vdots$

$H_i=S_{\eta+1-((\beta-i+1)\gamma -1)}\cup S_{\eta+1-((\beta-i+1)\gamma -2)\gamma+1}\cup \cdots\cup S_{\eta+1-(\beta-i)\gamma }$

$\vdots $

$H_{\beta }=S_{\eta+1-(\gamma-1)}\cup S_{\eta+1-(\gamma -2)}\cup \cdots \cup S_{\eta+1}$.

\noindent We will  properly colour  vertices in $H_1,\ldots ,H_\beta$ from their lists, step by step, in such a way that each set $T_i\cup H_i$ for $i\in[\beta]$ is rainbow. 

First, consider a colouring of vertices of $H_i$. 
To simplify the notation let $A=\alpha+1-((\beta-i+1)\gamma -1)$. Thus $H_i=S_{A}\cup S_{A+1}\cup\cdots\cup S_{A+\gamma-1}$. Recall that vertices $x_1^{A},\ldots ,x_k^{A}$ in $S_{A}$ fulfill the inequality (\ref{ineq_one}). We delete colours that are used on vertices in $T_i$ from lists of vertices in $S_{A}$. Now the lists of vertices in $S_{A}$ are shorter than $t$, however each vertex still has at least $\gamma k$ colours on the list. Assign to $x_k^{A}$ a colour from its list that is not used on vertices from $S_1\cup \cdots \cup S_{A-1}$. Since $|N_G(x_k^{A})\cap (S_1\cup \cdots \cup S_{A-1})|\le k-1$ and $|L(x_k^{A})|=\gamma k\ge k$, this may be done. Then assign to $x_{k-1}^{A},\ldots ,x_1^{A}$ (in a sequence) a colour from its list that is different from the ones assigned to the vertices with higher subscript and not used in $S_1\cup \cdots \cup S_{A-1}$. All these steps may be done since $|N_G(x_i^{A})\cap (S_1\cup \cdots \cup S_{A-1})|\le i-1$ and $|L(x_i^{A})|=\gamma k\ge k$. 
Now, we colour vertices in $S_{A+1}$, where $S_{A+1}=\{x_1^{A+1},\ldots ,x_k^{A+1}\}$. We delete  colours that are used on vertices in $T_i$ and $S_{A}$  from lists of vertices in $S_{A+1}$. Observe that after deleting colours from lists, each vertex in $S_{A+1}$ has at least $(\gamma-1)k$ colours on the list. Similarly as above, first we colour the vertex $x_k^{A+1}$ with a colour from its list that is not used  in $S_1\cup \cdots \cup S_{A}$ and then we colour, one by one,  vertices $x_{k-1}^{A+1},\ldots ,x_1^{A+1}$ with colours from their lists that are different from the ones assigned to the vertices with higher subscript and not used  in $S_1\cup \cdots \cup S_{A}$. We can do this since $|N_G(x_i^{A+1})\cap (S_1\cup \cdots \cup S_{A})|\le i-1$ and $|L(x_i^{A+1})|=(\gamma -1) k\ge k$. 
Observe  that in the same way we can colour vertices from sets $S_{A+2},\ldots,S_{A+\gamma-1}$.  Indeed, let $S_{A+j}=\{x_1^{A+j},\ldots ,x_k^{A+j}\}$. We delete from lists of vertices in $S_{A+j}$ colours that are used on vertices in $T_i\cup S_{A}\cup\cdots \cup S_{A+j-1}$ and then we assign the colour different from the ones assigned to the vertices with higher subscript and not used  in $S_1\cup \cdots \cup S_{A+j-1}$. 

Thus finally, we have obtained a proper colouring $c$ that admits a rainbow $t$-partition of $(G,c)$ which completes the proof.
\end{proof}

The next result generalizes Lemma \ref{lm}. We give only a sketch of its proof because it imitates  the proof of Lemma \ref{lem:vertex_partition}. 

\begin{lemma}
\label{lem:vertex_partition_arboricity}
Let  $k\in {\mathbb N}$.
If a graph $G$ has a special $k$-partition $S_1\cup \cdots \cup S_{\eta+1}$ such that $|S_1|\le k$ and $|S_j|=k$ for $j\in[2, \eta+1]$, moreover, if for every $j\in[2 , \eta+1]$ there is an ordering $x_1^j,\ldots ,x_k^j$ of vertices of  the set $S_j$ that for every $i\in[k]$ the inequality
\begin{equation}
|N_G(x_i^j) \cap (S_1\cup\cdots \cup S_{j-1})| \leq 2i-1,  \label{ineq_two} 
\end{equation}

\noindent is fulfilled then $G$ is equitably $t$-list  arborable  for any integer $t$ satisfying $t\ge k$. 
\end{lemma}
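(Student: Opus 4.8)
The plan is to imitate the proof of Lemma~\ref{lem:vertex_partition}, replacing the colour-choice mechanism for proper colourings by the corresponding mechanism for acyclic colour classes. As in the previous proof, I fix $k,t$ with $t\ge k$ and an arbitrary $t$-uniform list assignment $L$, and aim to produce an $L$-colouring $c$ in which every colour class induces an acyclic graph and such that $(G,c)$ admits a rainbow special $t$-partition; by Lemma~\ref{lem:rainbow-arborable} this suffices. I would write $t=\gamma k+r$, split $V(G)=V_1\cup V_2$ exactly as before with $V_1=S_1\cup\cdots\cup S_{\eta+1-\beta\gamma}$ and $V_2=S_{\eta+1-(\beta\gamma-1)}\cup\cdots\cup S_{\eta+1}$, and reuse verbatim the block decomposition $H_1,\ldots,H_\beta$ of $V_2$ together with the auxiliary rainbow $r$-partition $T_1,\ldots,T_\beta$ of $G_1-R$ obtained from Lemma~\ref{lem:rainbow_partition}. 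The only thing that changes is \emph{how} each vertex is assigned its colour.

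The key modification is the following: when processing the vertices of a set $S_j=\{x_1^j,\ldots,x_k^j\}$ in the order of decreasing subscript, instead of giving $x_i^j$ a colour differing from those on its already-coloured neighbours, I give it a colour that does not create a cycle inside its own colour class. The crucial observation is that if a vertex $v$ has at most $2i-1$ already-coloured neighbours in $S_1\cup\cdots\cup S_{j-1}$ (together with the higher-subscript vertices of $S_j$), then among these neighbours each colour that would close a cycle must be carried by at least two of them, because a single neighbour of a given colour extends an acyclic class to an acyclic class. Hence the number of \emph{forbidden} colours is at most $\lfloor (2i-1)/2\rfloor = i-1$, so with a list of length at least $k$ (after deleting at most the $i-1$ badly-behaved colours) there is always an admissible choice. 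This is precisely the place where the bound $2i-1$ in inequality~(\ref{ineq_two}) replaces the bound $i-1$ of~(\ref{ineq_one}): the factor of two is exactly what a cycle-avoidance argument can absorb compared with a properness argument.

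Concretely, I would argue that when colouring $x_i^{A+s}$ within a block $H_i$, its relevant neighbourhood lies in $S_1\cup\cdots\cup S_{A+s-1}$ together with the higher-indexed vertices of $S_{A+s}$, and after deleting the colours of $T_i\cup S_A\cup\cdots\cup S_{A+s-1}$ each vertex still retains a list of length at least $(\gamma-s)k\ge k$; among these at least $k$ colours, at most $i-1$ can be cycle-closing by the counting argument above, so a valid colour remains. Restricting each class to be acyclic is a local, greedy condition, so the same sequential colouring that worked for properness goes through, and the resulting partition into the $T_i\cup H_i$ (and the sets of $V_1$) is rainbow and special by the identical bookkeeping.

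The main obstacle, and the step I would be most careful about, is making the cycle-avoidance count rigorous in the presence of neighbours lying in the \emph{same} set $S_j$: when I assign a colour to $x_i^j$, I must forbid not only cycle-closing colours among the earlier-coloured sets but also ensure consistency with the higher-subscript vertices of $S_j$ already coloured in this round, and verify that these together still contribute at most $2i-1$ relevant neighbours so that at most $i-1$ colours are excluded. Once one checks that a colour appearing on exactly one neighbour never closes a cycle (so only colours with multiplicity $\ge 2$ are dangerous), the bound $\lfloor(2i-1)/2\rfloor=i-1$ follows and the rest is routine adaptation of the earlier proof. Since the paper explicitly states that only a sketch is needed and that it ``imitates the proof of Lemma~\ref{lem:vertex_partition}'', I would present the argument at the level of describing this single substitution rather than repeating the full block-by-block colouring.
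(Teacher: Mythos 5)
Your proposal is correct and takes essentially the same route as the paper, whose own proof is just a sketch: repeat the construction of Lemma~\ref{lem:vertex_partition}, replacing the properness rule for colouring $x_i^j$ by the mechanism of Lemma~\ref{lm} --- exactly your observation that only colours appearing with multiplicity at least $2$ among the at most $2i-1$ already-coloured neighbours can close a cycle, so at most $\lfloor(2i-1)/2\rfloor=i-1$ colours are dangerous. One small correction to your final accounting: the $k-i$ colours of the higher-subscript vertices of $S_j$ are excluded separately (for rainbowness), not absorbed into the $2i-1$ neighbour count, so the total number of forbidden colours is $(i-1)+(k-i)=k-1<k$, and the count still closes.
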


\begin{proof}
For fixed $k,t$ and a $t$-uniform  list assignment $L$ for $G$, we construct  an $L$-colouring $c$ of $G$  such that  the coloured graph $(G,c)$ has a  rainbow special $t$-partition and each colour class in $c$ induces an acyclic graph.  We do it in the same manner as in the proof of Lemma \ref{lem:vertex_partition}, but if we put a colour on the vertex $x_i^j$, $i\in[k],\;j\in [2,\eta+1]$ then we use Lemma \ref{lm} (instead of Lemma \ref{lem:Kostochka-Pelsmajer-West_2}) to guarantee that each colour class in $c$ induces an acyclic graph (instead of to guarantee that the constructed colouring is proper).
\end{proof}



Next, we give new tool that help us in proving further results concerning exact values as well as bounds on equitable list vertex arboricity of graphs.

A {\it spanning} graph $H$ of a graph $G$ is any subgraph of $G$ such that $V(H)=V(G)$. We say that a graph $H$ {\it covers all cycles} of $G$ if it is spanning and for any cycle $C$ contained in $G$ there are $x,y\in V(C)$ such that $xy\in E(H)$.

\begin{lemma}
\label{lem:covering}
Let $k\in \mathbb{N}$. If $H$ is a graph that covers all cycles of $G$ and $H$ is equitably $k$-choosable then $G$ is equitably $k$-list arborable.
\end{lemma}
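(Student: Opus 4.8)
The plan is to take an arbitrary $k$-uniform list assignment $L$ for $G$ and simply reuse the colouring furnished by the equitable choosability of $H$. Since $H$ is spanning we have $V(H)=V(G)$, so the same $L$ is also a $k$-uniform list assignment for $H$. Invoking that $H$ is equitably $k$-choosable, I would obtain an equitable proper $L$-colouring $c$ of $H$. The claim will be that the very same map $c$, now regarded as a colouring of $G$, witnesses that $G$ admits an equitable $L$-colouring all of whose colour classes are acyclic. As $L$ is arbitrary, establishing this for one such $L$ suffices to conclude that $G$ is equitably $k$-list arborable.

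There are three properties of $c$ to verify once it is viewed as a colouring of $G$. First, $c$ is an $L$-colouring of $G$, which is immediate because $G$ and $H$ share the same vertex set and $c$ respects every list $L(v)$. Second, $c$ is equitable on $G$: each colour class of $c$ has at most $\lceil |V(H)|/k\rceil$ vertices by equitability on $H$, and $\lceil |V(H)|/k\rceil=\lceil |V(G)|/k\rceil$ since $|V(H)|=|V(G)|$, so the required cardinality bound for $G$ holds. Third, and this is the only step carrying any content, each colour class of $c$ must induce an acyclic subgraph of $G$.

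To establish acyclicity I would argue by contradiction. Suppose some colour class $c^{-1}(a)$ contained a cycle $C$ of $G$. Because $H$ covers all cycles of $G$, there exist $x,y\in V(C)$ with $xy\in E(H)$. But $x$ and $y$ lie in the same colour class, so $c(x)=c(y)=a$, meaning the edge $xy\in E(H)$ is monochromatic. This contradicts the fact that $c$ is a proper colouring of $H$, whose colour classes are edgeless. Hence no colour class of $c$ can contain a cycle of $G$, every colour class induces an acyclic graph, and the verification is complete.

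The argument is essentially a one-line reduction, and the only pivotal ingredient is the covering hypothesis: it is exactly what converts a hypothetical monochromatic cycle in $G$ into a monochromatic edge of $H$, so that properness on $H$ can be invoked. In contrast to the earlier lemmas of this section, no induction on the number of vertices is required here, because the combinatorial difficulty has already been absorbed into the assumption that $H$ is equitably $k$-choosable.
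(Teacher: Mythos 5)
Your proof is correct and follows essentially the same route as the paper: pull back the equitable proper $L$-colouring of $H$ to $G$ and use the covering hypothesis to turn any would-be monochromatic cycle of $G$ into a monochromatic edge of $H$, contradicting properness. Your write-up merely makes explicit the (trivial) transfer of the list and cardinality conditions via $V(H)=V(G)$, which the paper leaves implicit.
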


\begin{proof}
Let $L$ be any $k$-list assignment for $G$. Let $c$ be an equitable proper $L$-colouring of $H$. We show that  each colour class induces an acyclic subgraph of $G$. Let $C$ be a cycle of $G$. By our assumption on $H$ there are $x,y\in V(C)$ such that $xy\in E(H)$. Thus $C$ contains two vertices which have different colours in $c$. Since $G$ has no monochromatic cycle in $c$, each colour class induces an acyclic graph.

\end{proof}

Lemma \ref{lem:covering} states that we can use known results related to equitable choosability for determining results  on equitable list vertex arboricity. Let us recall results proven in \cite{KiKo12}.

\begin{theorem}[\cite{KiKo12}]
\label{thm:Kierstead-Kostochka}
Let $r\in \mathbb{N}$ and  $G$ be a graph such that $\Delta(G) \le r$.

\begin{enumerate}[\rm (i)]
\item If $r\le 7$ and $k\ge r + 1$ then $G$ is equitably  $k$-choosable.
\item  If
$k\ge r+\left\{\begin{matrix}1+\frac{r-1}{7}& if & r\le 30 \cr \frac{r}{6} & if &r\ge 31\end{matrix}\right.$ then $G$ is equitably  $k$-choosable.
\item If $|V(G)|\ge r^3$ and   $k\ge r + 2$ then $G$ is equitably  $k$-choosable.
\item If $\omega(G)\le  r$ and $|V(G)|\ge 3(r + 1)r^8$ then $G$ is equitably  $(r + 1)$-choosable {\rm (}$\omega(G)$ is the clique number of $G${\rm )}.
\end{enumerate}
\end{theorem}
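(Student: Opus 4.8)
The plan is to run an induction on $|V(G)|$ whose engine is the reduction lemma already recorded as Lemma~\ref{lem:Kostochka-Pelsmajer-West_2}. Fix $r$ and a $k$ satisfying the hypotheses of whichever part is being proved, and let $L$ be an arbitrary $k$-uniform list assignment. The entire problem then collapses to a single structural task: exhibit an ordered set $S=\{x_1,\dots,x_k\}$ of $k$ vertices with $|N_G(x_i)\setminus S|\le i-1$ for every $i\in[k]$. Once such an $S$ is found, the graph $G-S$ satisfies the same degree (and, where relevant, clique and order) hypotheses, so it is equitably $k$-choosable by induction, and Lemma~\ref{lem:Kostochka-Pelsmajer-West_2} lifts this to $G$. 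The base cases are graphs on at most $k$ vertices, where any injective choice from the lists succeeds after a routine Hall check. So all four parts reduce to an existence statement for the ordered sparse $k$-set $S$ under the respective numerical hypotheses.

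It is worth reformulating the target condition on $S$ before attacking it. Writing the outside-degrees $|N_G(x)\setminus S|$ of the vertices of $S$ in nondecreasing order as $d_1\le\dots\le d_k$, the required ordering exists precisely when $d_i\le i-1$ for all $i$; equivalently, for every $j\ge 1$ at most $k-j$ vertices of $S$ may have at least $j$ neighbours outside $S$. The binding instance $j=1$ forces $d_1=0$, i.e.\ $S$ must contain a vertex $x_1$ with $N(x_1)\subseteq S$. Since $\deg(x_1)\le r$ and $k\ge r+1$, one can always seed $S$ with a vertex together with its whole (size $\le r$) neighbourhood; the real content is to keep filling the remaining slots so that this single containment grows into the full nested family demanded by the $j=2,3,\dots$ constraints.

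For parts (ii)--(iv) I would discharge this nested packing onto the available slack. When $k$ exceeds $r+1$ (part (ii)) the extra room $k-(r+1)$ in each neighbourhood-containment step lets a greedy construction of $S$ proceed without ever saturating a constraint, and one verifies the majorization inequalities $d_i\le i-1$ step by step. When instead the order is large ($|V(G)|\ge r^3$ in (iii), and $|V(G)|\ge 3(r+1)r^8$ with $\omega(G)\le r$ in (iv)), the abundance of vertices forces the existence of many mutually compatible locally contained neighbourhoods; here I would set up the bipartite incidence between the $k$ prospective classes and the vertices and certify Hall's condition by counting, so that a system of distinct representatives yields the sparse transversal $S$. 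In (iv) the clique bound $\omega(G)\le r$ is exactly what prevents a few dense spots from spoiling Hall's condition once $|V(G)|$ is astronomically large relative to $r$.

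The genuinely hard part is (i), the tight regime $k=r+1$ with $r\le 7$, where there is no numerical slack and a naive greedy choice of $S$ can fail on locally dense configurations such as copies of $K_{r+1}$. Here I would resort to a discharging argument: place unit charge on each vertex, redistribute it along edges so that sparse regions retain positive charge, and conclude that either a valid ordered $k$-set $S$ exists or $G$ contains one of a short list of irreducible dense configurations, each then coloured directly; the bound $r\le 7$ is what keeps this list finite, explaining why the clean threshold $k\ge r+1$ must be relaxed for larger $r$ in (ii). Finally I would reconcile the equitability bookkeeping: because Lemma~\ref{lem:Kostochka-Pelsmajer-West_2} peels off exactly $k$ vertices per step, I would first pad $G$ with isolated vertices (and extend the lists to keep them $k$-uniform) to arrange $k\mid|V(G)|$, which affects none of the hypotheses, guaranteeing that every colour class in the final colouring has size $\lceil|V(G)|/k\rceil$ or $\lfloor|V(G)|/k\rfloor$. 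The main obstacle throughout is the existence of the nested sparse $k$-set in the tight case (i); once that structural lemma is secured, the remaining parts are the routine application of Lemma~\ref{lem:Kostochka-Pelsmajer-West_2} described above.
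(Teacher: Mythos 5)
This statement is not proved in the paper at all: Theorem~\ref{thm:Kierstead-Kostochka} is imported verbatim from Kierstead and Kostochka \cite{KiKo12} and used as a black box (together with Lemma~\ref{lem:covering}) to obtain Theorem~\ref{thm:general-upper-bound}. So your attempt has to stand on its own as a reproof of the Kierstead--Kostochka results, and as such it has gaps that are not just missing detail but structural failures.

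The most concrete one concerns parts (iii) and (iv): your induction peels off $k$ vertices per step, so $|V(G)|$ strictly decreases, and after finitely many steps the hypotheses $|V(G)|\ge r^3$, resp.\ $|V(G)|\ge 3(r+1)r^8$, are violated; your claim that $G-S$ ``satisfies the same \dots order hypotheses'' is simply false. At that point the induction hypothesis no longer applies, and the base case you would need --- every graph with $\Delta\le r$ on \emph{fewer} than $r^3$ vertices is equitably $(r+2)$-choosable --- is not available: part (ii) yields $k\ge r+2$ only for $r\le 8$, and for $r\ge 9$ no such result is known (if it were, the order hypothesis in (iii) would be superfluous). For (iv) the needed base case is essentially Conjecture~\ref{con0} itself, which is open. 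Beyond this, part (i) --- the actual main theorem of \cite{KiKo12}, whose proof occupies most of that paper --- is entirely deferred to an unspecified discharging argument with an unspecified ``short list of irreducible dense configurations''; no rules, no configurations, and no colouring of them are given, so nothing is proved in the tight regime. Likewise, in part (ii) the assertion that the slack $k-(r+1)\ge (r-1)/7$ lets a greedy construction of the ordered sparse set $S$ ``proceed without ever saturating a constraint'' is unsupported by any counting; greedy constructions of such sets are precisely what fail in general, which is why Conjecture~\ref{con0} is hard. (Minor remarks: the padding by isolated vertices is unnecessary, since the standard induction allows the first, leftover set to have fewer than $k$ vertices; and it is harmless but does nothing to repair the issues above.)
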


Theorem \ref{thm:Kierstead-Kostochka} and Lemma \ref{lem:covering} imply the general upper bound on equitable list vertex arboricity.

\begin{theorem}
\label{thm:general-upper-bound}
Let $r\in \mathbb{N}$ and $G$ be a graph with at least one edge and $\Delta(G) -1\le r$.

\begin{enumerate}[\rm (i)]
\item If $r\le 7$ and $k\ge r +1$ then $G$ is equitably $k$-list arborable.
\item  If
$k\ge r+\left\{\begin{matrix}1+\frac{r-1}{7}& if & r\le 30 \cr \frac{r}{6} & if &r\ge 31\end{matrix}\right.$ then $G$ is equitably $k$-list arborable.

\item If $|V(G)|\ge r^3$ and   $k\ge r + 2$ then $G$ is equitably $k$-list arborable.
\item If $\omega(G)\le  r$ and $|V(G)|\ge 3(r + 1)r^8$ then $G$ is equitably $(r + 1)$-list arborable.
\end{enumerate}
\end{theorem}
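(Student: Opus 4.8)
The plan is to derive Theorem~\ref{thm:general-upper-bound} as a direct consequence of Theorem~\ref{thm:Kierstead-Kostochka} and Lemma~\ref{lem:covering}. The key observation is that if $G$ has $\Delta(G)-1\le r$, then one can delete a suitable set of edges to obtain a spanning subgraph $H$ with $\Delta(H)\le r$ that still covers all cycles of $G$. Given such an $H$, any equitable $k$-choosability guarantee for $H$ transfers immediately to equitable $k$-list arborability of $G$ via Lemma~\ref{lem:covering}. Each item (i)--(iv) then follows by applying the corresponding item of Theorem~\ref{thm:Kierstead-Kostochka} to $H$, since the hypotheses on $r$, $k$, $|V(G)|=|V(H)|$, and $\omega(G)\ge\omega(H)$ match up.

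First I would construct the covering subgraph $H$. Since every cycle $C$ in $G$ has minimum degree $2$ within $C$, it is enough to ensure that for each cycle at least one of its edges survives in $H$; more carefully, I want $H$ to keep enough edges that no cycle of $G$ becomes edgeless in $H$ while simultaneously reducing the maximum degree by one. The concrete recipe is: for each vertex $v$ of maximum degree $\Delta(G)$, remove a single incident edge so that $\Delta(H)\le\Delta(G)-1\le r$; one must check that this edge-removal can be done without destroying the covering property. The cleanest way is to remove edges so that $H$ remains connected (or at least retains a spanning structure in which every cycle of $G$ still contains an $H$-edge). Because $G$ has at least one edge and we only lower degrees by one, $H$ is nonempty and spanning with $V(H)=V(G)$.

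The main obstacle is verifying that the degree-reduction and the cycle-covering requirement are simultaneously satisfiable. The safest formulation avoids deleting edges cycle-by-cycle and instead argues structurally: if $H$ is any spanning subgraph of $G$ with $\Delta(H)\le r$ obtained by deleting a set of edges, then a cycle $C$ of $G$ fails to be covered only if \emph{all} of its edges were deleted, i.e. $C$ is a cycle in the deleted-edge graph $G-E(H)$; hence it suffices to choose the deletion so that $G-E(H)$ is acyclic (a forest). Equivalently, I take $H=G-F$ where $F\subseteq E(G)$ is chosen to be a forest whose removal brings the maximum degree down to at most $\Delta(G)-1\le r$. Such a forest exists because removing a single edge incident to each maximum-degree vertex can be arranged to avoid creating a cycle among the removed edges; one verifies that $E(H)$ then meets every cycle of $G$, so $H$ covers all cycles of $G$.

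Once $H$ is fixed with $\Delta(H)\le r$, the four implications are immediate. For (i), $r\le 7$ and $k\ge r+1$ give equitable $k$-choosability of $H$ by Theorem~\ref{thm:Kierstead-Kostochka}(i), hence equitable $k$-list arborability of $G$ by Lemma~\ref{lem:covering}; (ii) and (iii) follow identically from parts (ii) and (iii), noting $|V(H)|=|V(G)|$; and (iv) follows from part (iv) using $\omega(H)\le\omega(G)\le r$ together with the vertex-count bound. I expect the only genuine content of the proof to lie in the existence of the cycle-covering spanning subgraph $H$ of maximum degree at most $r$; the rest is bookkeeping that matches the hypotheses of each case.
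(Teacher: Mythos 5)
Your proposal is correct and takes essentially the same route as the paper: delete an acyclic edge set $F$ so that $H=G-F$ satisfies $\Delta(H)\le\Delta(G)-1\le r$ and still covers all cycles of $G$ (any uncovered cycle would have all its edges in $F$, contradicting acyclicity), then apply Theorem~\ref{thm:Kierstead-Kostochka} to $H$ and transfer via Lemma~\ref{lem:covering}. The paper settles your slightly hand-waved existence step for $F$ by simply taking a spanning forest of $G$ with the same number of components as $G$, which automatically contains an edge at every non-isolated vertex and hence lowers every maximum degree by at least one.
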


\begin{proof}
Let $F$ be a spanning forest of $G$ such that the numbers of connected components of $F$ and $G$ are the same. Thus $G-F$ covers all cycles of $G$. By Lemma \ref{lem:covering}, if $G-F$ is equitably  $k$-choosable then $G$ is equitably $k$-list arborable. Since $\Delta(G-F)\le \Delta(G)-1$, the theorem follows directly from Theorem \ref{thm:Kierstead-Kostochka}
\end{proof}

If we restrict our consideration to particular graph classes or to graphs with particular properties, we get even better bounds on equitable list arboricity that, in addition, confirm Zhang's conjecture. 


\begin{theorem}[\cite{KoPeWe03}]
\label{thm:Kostochka-Pelsmajer-West}
Let $k\in \mathbb{N}$ and let $F$ be a forest. If $k\ge \Delta(F)/2+1$ then $F$ is equitably  $k$-choosable.
\end{theorem}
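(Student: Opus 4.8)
The plan is to prove the theorem by induction on $|V(F)|$, peeling off $k$ vertices at a time with Lemma~\ref{lem:Kostochka-Pelsmajer-West_2}. (Carried out globally this peeling produces exactly a special $k$-partition as in Lemma~\ref{lem:vertex_partition}, so the same argument in fact yields equitable $t$-choosability for all $t\ge k$; here only $t=k$ is needed.) Throughout I write the hypothesis as $\Delta(F)\le 2k-2$, which is equivalent to $k\ge\Delta(F)/2+1$. For the base case, suppose $|V(F)|\le k$. Then for any $k$-uniform list assignment $L$ every set of $j\le|V(F)|$ vertices has $\bigl|\bigcup L\bigr|\ge k\ge j$ admissible colours, so by Hall's theorem the vertices admit a system of distinct representatives; colouring them with distinct colours is automatically proper, and each colour class has at most one vertex, i.e.\ at most $\lceil|V(F)|/k\rceil=1$, so the colouring is equitable.

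For the inductive step assume $|V(F)|>k$. By Lemma~\ref{lem:Kostochka-Pelsmajer-West_2} it suffices to produce a $k$-set $S=\{x_1,\dots,x_k\}$ with an ordering satisfying (\ref{ineq}), since $F-S$ is a smaller forest with maximum degree at most $\Delta(F)$ and is equitably $k$-choosable by the induction hypothesis. Writing $b(x)=|N_F(x)\setminus S|$ for the outside-degree of $x\in S$, a standard greedy argument shows that a valid ordering exists if and only if, after sorting the numbers $b(x)$ as $d_1\le\cdots\le d_k$, one has $d_i\le i-1$ for every $i$; equivalently at most $k-t$ vertices of $S$ may satisfy $b(x)\ge t$. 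In particular some vertex must have all its neighbours inside $S$, and no vertex may have $b(x)>k-1$. The two facts that drive the choice of $S$ are: (i) in a rooted forest, if $S$ contains a full subtree (a vertex together with all its descendants) then every vertex of that subtree has all its children in $S$, so only the subtree's root can have an outside-neighbour, giving $b\le 1$ at the root and $b=0$ elsewhere in it; and (ii) a single vertex $y$ whose subtree has at most $k-1$ vertices satisfies $|N_F(y)|\le(\text{children of }y)+1\le k-1$, hence $b(y)\le k-1$ irrespective of the rest of $S$.

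To choose $S$ I root each tree of $F$ and split according to the leaf structure. If some vertex $v$ has at least $k-1$ leaf-children, I set $S=\{v\}\cup L$ with $L$ any $k-1$ of those leaves: each leaf has its unique neighbour $v$ inside $S$, so $b=0$ there, while $b(v)=|N_F(v)|-(k-1)\le\Delta(F)-(k-1)\le k-1$, and ordering $L$ first and $v$ last satisfies (\ref{ineq}); this is exactly the case where $\Delta(F)\le 2k-2$ is used. Otherwise every vertex has at most $k-2$ leaf-children, and I assemble $S$ from the bottom as a disjoint union of full subtrees, using small subtrees (down to single leaves) for fine control of the total, together with at most one extra ``root-only'' vertex whose subtree has at most $k-1$ vertices. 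By facts (i) and (ii) all resulting outside-degrees are $0$, $1$, or a single value at most $k-1$, and the sorted sequence meets $d_i\le i-1$ as long as at least one chosen vertex has $b=0$.

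I expect the main obstacle to be precisely this construction of $S$ in the second case: making the chosen pieces sum to exactly $k$ while guaranteeing at least one vertex of outside-degree $0$, and handling uniformly the degenerate configuration in which the natural bottom choice is an independent set of leaves all sharing outside parents (the star-type obstruction). This is the configuration that the leaf-children split is designed to intercept, and confirming that the complementary case always admits a full-subtree decomposition of total size exactly $k$ — rather than verifying any single inequality — is where the real work lies.
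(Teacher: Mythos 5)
First, note that the paper does not prove this statement at all: Theorem \ref{thm:Kostochka-Pelsmajer-West} is imported verbatim from \cite{KoPeWe03}, so there is no in-paper argument to compare against, and your proposal has to stand on its own. Your overall strategy (induction on $|V(F)|$, peeling off a $k$-set $S$ via Lemma \ref{lem:Kostochka-Pelsmajer-West_2}, with the sorted outside-degree criterion $d_i\le i-1$) is sound, the base case is fine, and your Case 1 --- a vertex $v$ with at least $k-1$ leaf-children, $S=\{v\}\cup L$, ordering the leaves first and $v$ last with $b(v)\le \Delta(F)-(k-1)\le k-1$ --- is complete and correct; this is indeed the only place the hypothesis $\Delta(F)\le 2k-2$ is needed.

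The genuine gap is Case 2, and you flag it yourself: ``confirming that the complementary case always admits a full-subtree decomposition of total size exactly $k$ \dots is where the real work lies.'' As written this is a plan, not a proof, and the obstruction you name is real: a union of $k$ singleton subtrees (childless vertices whose parents all lie outside $S$) has every outside-degree equal to $1$, so $d_1\le 0$ fails, and nothing in your description of the selection rules this out. The missing idea that closes the gap is to anchor $S$ at a non-leaf vertex $v$ of maximum depth in the rooted forest, so that all children of $v$ are leaves. In your Case 2 such a $v$ has $j\le k-2$ children, hence its full subtree $T_v$ has $j+1\le k-1$ vertices and fits inside $S$, and crucially it contributes $j\ge 1$ vertices of outside-degree $0$. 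Then fill the remaining $k-j-1$ slots one vertex at a time with childless vertices of the successively pruned forest: each such vertex has all of its children already in $S$, hence outside-degree at most $1$, and such vertices exist because $|V(F)|>k$. The sorted sequence is then $0,\dots,0,1,\dots,1$ with at least one $0$, which satisfies $d_i\le i-1$, and no ``extra root-only vertex'' of outside-degree up to $k-1$ is ever needed (if $F$ is edgeless the step is trivial). With that anchoring argument inserted your induction goes through; without it, the proof is incomplete at exactly the point where the hypothesis ``every vertex has at most $k-2$ leaf-children'' must be converted into an actual admissible choice of $S$.
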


We can apply Theorem  \ref{thm:Kostochka-Pelsmajer-West} to show an upper bound on equitable list vertex arboricity of graphs with (edge) arboricity equal to 2. The {\it $($edge$)$ arboricity} of a graph $G$ is the minimum number of forests into which its edges can be partitioned.

\begin{theorem}
\label{thm:arboricity_two}
Let $k\in \mathbb{N}$ and let $G$ be a graph with arboricity $2$. If $k\ge \left\lceil (\Delta(G)+1)/2\right\rceil$ then $G$ is equitably $k$-list arborable. 
\end{theorem}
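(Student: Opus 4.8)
The plan is to invoke Lemma~\ref{lem:covering}: it suffices to exhibit a spanning subgraph $H$ of $G$ that covers all cycles of $G$ and is equitably $k$-choosable. Since $G$ has arboricity $2$, its edge set decomposes as $E(G)=E(F_1)\cup E(F_2)$ with $F_1,F_2$ spanning forests. Taking $H=F_1$, the graph $G-E(F_1)=F_2$ is acyclic, so every cycle $C$ of $G$ must use at least one edge $xy\in E(F_1)$ with $x,y\in V(C)$; hence $F_1$ covers all cycles. It then remains to guarantee that $F_1$ is equitably $k$-choosable, for which Theorem~\ref{thm:Kostochka-Pelsmajer-West} asks for $k\ge \Delta(F_1)/2+1$.

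To meet this bound I would first arrange that $\Delta(F_1)\le \Delta(G)-1$. Indeed, then $\Delta(F_1)/2+1\le (\Delta(G)-1)/2+1=(\Delta(G)+1)/2\le \lceil(\Delta(G)+1)/2\rceil\le k$, so Theorem~\ref{thm:Kostochka-Pelsmajer-West} applies and shows that $F_1$ is equitably $k$-choosable; Lemma~\ref{lem:covering} then finishes the argument. Note that this single degree estimate covers both parities of $\Delta(G)$ uniformly, which is why I prefer to force $\Delta(F_1)\le\Delta(G)-1$ rather than work with an arbitrary decomposition. (Since $G$ has arboricity $2$ it is not a forest, so $\Delta(G)\ge 2$ and $G$ has an edge, ruling out degenerate cases.)

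The heart of the matter is producing a decomposition with $\Delta(F_1)\le \Delta(G)-1$. Starting from any forest decomposition $F_1,F_2$, I would repeatedly repair $F_1$ by an edge-exchange: while some vertex $v$ has $\deg_{F_1}(v)=\Delta(G)$, observe that $\deg_G(v)=\Delta(G)$ forces $\deg_{F_2}(v)=0$, so $v$ is isolated in $F_2$; moving any edge $vw\in E(F_1)$ into $F_2$ creates no cycle in $F_2$ (an isolated endpoint cannot close a cycle) and leaves $F_1$ a forest. This exchange never raises an $F_1$-degree and strictly lowers $|E(F_1)|$, so after finitely many steps no vertex has $F_1$-degree $\Delta(G)$, i.e.\ $\Delta(F_1)\le \Delta(G)-1$, as required. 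The only delicate point — and the step I expect to be the main obstacle — is this re-balancing: one must verify that each exchange preserves the forest property of $F_2$ and that the procedure terminates, both of which follow from the observation that a vertex of $F_1$-degree $\Delta(G)$ is isolated in the complementary forest $F_2$.
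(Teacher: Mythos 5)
Your proposal is correct and follows essentially the same route as the paper: decompose $E(G)$ into two spanning forests $F_1,F_2$, note that $F_1$ covers all cycles, force $\Delta(F_1)\le\Delta(G)-1$ by exploiting that any vertex of $F_1$-degree $\Delta(G)$ must be isolated in $F_2$, and conclude via Theorem~\ref{thm:Kostochka-Pelsmajer-West} and Lemma~\ref{lem:covering}. The only difference is in the mechanics of the degree reduction: you move edges out of $F_1$ one at a time by an exchange argument, while the paper removes in a single step a minimal edge set $E_1'$ covering all maximum-degree vertices and verifies that $E_2\cup E_1'$ remains acyclic --- both repairs rest on exactly the same isolation observation, so the proofs are interchangeable.
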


\begin{proof}
Let $F_1=(V(G),E_1)$ and $F_2=(V(G),E_2)$ be two forests into which $E(G)$ was partitioned. Of course, $E(G)=E_1 \cup E_2$. It is clear that $F_1$ covers all cycles of $G$. If $\Delta(F_1)<\Delta(G)$ then by Theorem \ref{thm:Kostochka-Pelsmajer-West} and Lemma \ref{lem:covering} $G$ is equitably $k$-list arborable for $k\ge \Delta(F_1)/2+1$. It means that $G$ is equitably $k$-list arborable for $k\ge \left\lceil (\Delta(G)+1)/2\right\rceil$. Suppose that $\Delta(F_1)=\Delta(G)$. Let $D$ be the set of vertices of maximum degree in $F_1$. Observe that every vertex in $D$ is adjacent only with edges from $E_1$.  Let $E_1'\subseteq E_1$ be the minimal set of edges such that $D\subseteq \bigcup_{e\in E'_1}e$. Since $E_1'$ is minimal, the subgraph induced by $E'_1$ is a star-forest. Furthermore, in the subgraph induced by $E_2\cup E'_1$ every edge in $E'_1$ is a pendant edge. Thus the subgraph induced by $E_2\cup E'_1$ is acyclic and so $F_1-E'_1$ covers all cycles of $G$. Since $\Delta(F_1-E'_1)<\Delta(G)$,  by Theorem \ref{thm:Kostochka-Pelsmajer-West} and Lemma \ref{lem:covering}, $G$ is equitably $k$-list arborable for $k\ge \Delta(F_1-E'_1)/2+1$. It means that $G$ is equitably $k$-list arborable for  $k\ge \left\lceil (\Delta(G)+1)/2\right\rceil$. 
\end{proof}

A graph $G$ is $d$-{\it degenerate} if  every subgraph of $G$ has a vertex of degree at most $d$.
Since every $2$-degenerate graph has arboricity $2$, Theorem \ref{thm:arboricity_two} confirms the result for $2$-degenerate graphs obtained by Zhang \cite{zhang}.

\begin{corollary}[\cite{zhang}]
Let $k\in \mathbb{N}$ and let $G$ be a $2$-degenerate graph. If $k\ge \left\lceil (\Delta(G)+1)/2\right\rceil$ then $G$ is equitably $k$-list arborable. 
\end{corollary}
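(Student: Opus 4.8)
The plan is to reduce the statement to Theorem~\ref{thm:arboricity_two} by proving the one structural fact that is missing, namely that every $2$-degenerate graph has (edge) arboricity at most $2$, i.e.\ its edge set partitions into two forests. Once this is available, the hypothesis $k\ge\left\lceil(\Delta(G)+1)/2\right\rceil$ is verbatim the hypothesis of Theorem~\ref{thm:arboricity_two}, and the conclusion follows at once. This matches the remark preceding the statement, which already signals that $2$-degeneracy is meant to be fed into the arboricity-$2$ result.

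To establish the arboricity bound I would first fix a degeneracy ordering $v_1,\ldots,v_n$ of $V(G)$: since $G$ is $2$-degenerate, repeatedly deleting a vertex of degree at most $2$ produces an ordering in which every $v_i$ has at most two neighbours among $\{v_1,\ldots,v_{i-1}\}$ (its \emph{back-neighbours}). Orienting each edge from its later endpoint to its earlier one gives every vertex out-degree at most $2$. I would then distribute the (at most two) out-edges of each vertex among two classes $E_1,E_2$, assigning them distinct classes whenever a vertex has two out-edges. This guarantees that within each $E_\ell$ every vertex has at most one back-neighbour.

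The key point is that a spanning subgraph in which, relative to a fixed vertex ordering, every vertex has at most one back-neighbour must be a forest: in any putative cycle the vertex of largest index would have both of its cycle-neighbours earlier in the order, hence two back-neighbours, a contradiction. Applying this to $E_1$ and $E_2$ shows that $(V(G),E_1)$ and $(V(G),E_2)$ are forests, so $E(G)=E_1\cup E_2$ exhibits arboricity at most $2$. (Alternatively, one could invoke the Nash--Williams formula together with the bound $|E(H)|\le 2|V(H)|-3$ for subgraphs of a $2$-degenerate graph, but the direct orientation argument is self-contained.)

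The only delicate point, and the place I would be most careful, is the boundary case where $G$ is itself a forest, so that its arboricity is $1$ rather than $2$. Strictly, Theorem~\ref{thm:arboricity_two} is phrased for arboricity exactly $2$, but its proof uses only a partition of $E(G)$ into two forests and permits one of them to be empty; equivalently, a forest has no cycles, so the acyclic colour classes come for free. I would therefore state the reduction as ``arboricity at most $2$'' and note that Theorem~\ref{thm:arboricity_two} applies unchanged to any graph admitting such a partition. With that caveat handled, the corollary is immediate; it is in essence Theorem~\ref{thm:arboricity_two} specialised to the subclass of $2$-degenerate graphs, and no genuinely new difficulty arises.
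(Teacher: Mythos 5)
Your proposal is correct and follows essentially the same route as the paper, which obtains the corollary from Theorem~\ref{thm:arboricity_two} via the fact that every $2$-degenerate graph has (edge) arboricity at most $2$. The paper merely asserts this arboricity fact in one sentence, whereas you supply a valid orientation argument for it and correctly observe that the boundary case of a forest is harmless, since the proof of Theorem~\ref{thm:arboricity_two} only needs a partition of $E(G)$ into two forests, one of which may be empty.
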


\section{Equitable choosability of grids}
\label{colouring-grids}

Since our new tool (Lemma \ref{lem:covering}) uses the notion of equitable choosability we dedicate this section to this notion for some graphs related to grids. Nethertheless, before we consider it, we give some sufficient conditions for graphs to be equitably $2$-choosable.

\begin{lemma}
\label{lem:two-equitable-list-colourable}
If $G$ has a matching of size $\left\lfloor |V(G)|/2\right\rfloor$ and $G$ is  $2$-choosable then $G$ is equitably  $2$-choosable.
\end{lemma}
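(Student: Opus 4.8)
The plan is to produce, for an arbitrary $2$-uniform list assignment $L$, a proper $L$-colouring of $G$ whose coloured graph admits a rainbow $2$-partition, and then to invoke Lemma \ref{lem:rainbow-colourable} to conclude equitable $2$-choosability. First I would use the hypothesis that $G$ is $2$-choosable: for the given $L$ there exists a proper $L$-colouring $c$ of $G$. The only remaining task is to certify that this $c$ is equitable, and the characterization in Lemma \ref{lem:rainbow-colourable} reduces this entirely to exhibiting a rainbow $2$-partition of $(G,c)$.

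The key observation is that the prescribed matching supplies such a partition for free. Let $M$ be a matching of size $\lfloor |V(G)|/2\rfloor$. Its edges are pairwise vertex-disjoint and cover either all of $V(G)$ (when $|V(G)|$ is even) or all but exactly one vertex (when $|V(G)|$ is odd). I would take the $2$-partition whose blocks are the pairs $\{u,v\}$ for $uv\in M$, together with the single uncovered vertex as a singleton block in the odd case. The number of blocks is then $\lfloor |V(G)|/2\rfloor$ in the even case and $\lfloor |V(G)|/2\rfloor+1$ in the odd case, which in both cases equals $\lceil |V(G)|/2\rceil$; hence this is genuinely a $2$-partition in the sense of the definition.

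It then remains only to check that every block is rainbow. A singleton block is rainbow trivially. For a pair $\{u,v\}$ with $uv\in M\subseteq E(G)$, the endpoints $u$ and $v$ are adjacent in $G$, and since $c$ is a proper $L$-colouring we have $c(u)\neq c(v)$, so the pair is rainbow. Thus $(G,c)$ admits a rainbow $2$-partition, and Lemma \ref{lem:rainbow-colourable} yields that $G$ is equitably $2$-choosable. There is no real obstacle in this argument: its whole content is the elementary remark that a proper colouring automatically assigns distinct colours to the two endpoints of each matching edge, and the role of the large matching is merely to ensure that all but at most one vertex can be paired off in this fashion.
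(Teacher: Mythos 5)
Your proof is correct, and it rests on the same core observation as the paper's: take the proper $L$-colouring $c$ supplied by $2$-choosability and check that the matching forces $c$ to be equitable. The difference is only in how that check is packaged. The paper argues directly from the definition: a matching of size $\lfloor |V(G)|/2\rfloor$ implies $\alpha(G)\le\lceil |V(G)|/2\rceil$, every colour class of a proper colouring is independent, hence every colour class has at most $\lceil |V(G)|/2\rceil$ vertices and $c$ is already equitable --- no auxiliary lemma needed. You instead route through Lemma \ref{lem:rainbow-colourable}: the matching edges (plus the possible uncovered singleton) form a $2$-partition into $\lceil |V(G)|/2\rceil$ blocks, each block is rainbow because $c$ is proper on adjacent endpoints, and the characterization then yields equitable $2$-choosability. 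The two arguments are mathematically equivalent --- your rainbow partition is precisely a witness for the colour-class bound, in the spirit of Observation \ref{obs:rainbow-bounded} --- so nothing is gained or lost in strength; the paper's version is slightly more self-contained, while yours makes the partition explicit and uses only the easy ("if'') direction of Lemma \ref{lem:rainbow-colourable}, consistently exercising the machinery the paper develops for the later, harder lemmas.
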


\begin{proof}
Observe that the assumption that $G$ has a matching of size $\left\lfloor |V(G)|/2\right\rfloor$ implies  that $\alpha(G)\le \left\lceil |V(G)|/2\right\rceil$ ($\alpha(G)$ denotes  the cardinality of the largest independent vertex set of $G$). Thus  each colour class has at most $\left\lceil |V(G)|/2\right\rceil$ vertices in any proper colouring of $G$. Let $L$ be a $2$-uniform  list assignment for $G$. Since $G$ is  2-choosable, there is a proper $L$-colouring $c$ of $G$. Furthermore, every colour class in $c$ has at most $\left\lceil |V(G)|/2\right\rceil$ vertices, and so $c$ is equitable proper $L$-colouring of $G$. 
\end{proof}

The graphs that are 2-choosable were characterized by Erd\"{o}s, Rubin and Taylor in \cite{ErRuTa80}.
The \emph{core} of  $G$ is  a graph obtained from $G$ by recursive removing all vertices of degree one. Thus the core of $G$ has no vertices of degree one. A graph is called a $\Theta_{2,2,p}$-\emph{graph} if it consists of two vertices $x$ and $y$ and three internally disjoint paths of lengths 2, 2 and $p$, joining $x$ and $y$. 

\begin{theorem}[\cite{ErRuTa80}]
\label{thm:two-list-colourable}  
A connected graph $G$ is $2$-choosable if and only if the core of $G$ is either $K_1$, or an even cycle, or a $\Theta_{2,2,2r}$-graph, where $r\in \mathbb{N}$.
\end{theorem}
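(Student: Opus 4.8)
The plan is to prove both implications, after first reducing to graphs of minimum degree at least two. The essential preliminary observation is that $2$-choosability is invariant under deletion (and addition) of a degree-one vertex: if $v$ has degree one in $G$, then $G$ is $2$-choosable if and only if $G-v$ is. Indeed, given an $L$-colouring of $G-v$ we colour $v$ with whichever of its two listed colours differs from its unique neighbour, and conversely any $2$-uniform list assignment on $G-v$ extends to $G$ and its solution restricts back. Iterating this equivalence along the recursive removal of degree-one vertices shows that $G$ is $2$-choosable if and only if its core is. Hence it suffices to characterise the $2$-choosable graphs that equal their own core, together with the base case $K_1$ arising from trees.

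For the sufficiency direction I would check directly that each admissible core is $2$-choosable. The case $K_1$ is immediate. For an even cycle $v_1v_2\cdots v_{2n}v_1$ with a $2$-uniform assignment $L$, I split into two cases: if some vertex $v_i$ carries a colour $\alpha\in L(v_i)$ with $\alpha\notin L(v_{i+1})$, then setting $v_i=\alpha$ makes the edge $v_iv_{i+1}$ vacuous, and one colours the opened path $v_{i-1},v_{i-2},\ldots,v_{i+1}$ greedily, each vertex retaining a choice; otherwise adjacent lists always coincide, so by connectedness all lists equal a single pair $\{1,2\}$ and a proper alternating colouring succeeds because the cycle is even. The delicate positive case is $\Theta_{2,2,2r}$: writing the poles $x,y$, the two length-two paths $xa_1y$, $xa_2y$, and the even path of length $2r$, I would argue by cases on $L(x)$ and $L(y)$, using that each $a_i$ is adjacent only to the two poles so the whole difficulty concentrates on choosing the pole colours so that they either coincide or avoid $L(a_1)$ and $L(a_2)$, while the long even path supplies the remaining freedom exactly as in the cycle analysis. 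This is precisely where both short paths having length $2$ is used.

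For the necessity direction I would show that any core other than $K_1$, an even cycle, or a $\Theta_{2,2,2r}$ admits a $2$-uniform assignment with no proper $L$-colouring. The basic obstructions are an odd cycle under the constant list $\{1,2\}$, together with explicit bad lists for the forbidden thetas, namely those with a path of odd length or with fewer than two paths of length exactly two (such as $\Theta_{2,4,4}$). The structural core of the argument is to prove that a $2$-connected graph of minimum degree at least two that is neither a cycle nor a $\Theta_{2,2,2r}$ must contain one of these obstructions after a short reduction: one locates two vertices of degree at least three, analyses the three internally disjoint paths joining them to identify a bad theta, or else finds two cycles meeting in at most one vertex and builds a non-colourable assignment there.

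I expect this final classification to be the main obstacle. Ruling out every non-admissible core uniformly demands the most care, both in manufacturing the non-colourable list assignments and in verifying that the case analysis on the block and path structure of the core is genuinely exhaustive; by contrast the reduction to the core and the positive verifications for $K_1$ and even cycles are routine, and the $\Theta_{2,2,2r}$ verification, while fiddly, is a bounded case analysis.
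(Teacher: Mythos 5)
The paper does not prove this statement at all: it is quoted from Erd\H{o}s, Rubin and Taylor \cite{ErRuTa80} and used as a black box (its only role is to feed Lemmas \ref{lem:two-equitable-list-colourable} and \ref{lem:equitable-colouring-cycles}). So there is no internal proof to compare against; what follows assesses your proposal on its own terms.

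Your architecture is the classical one (reduction to the core, direct verification of the three admissible cores, explicit bad list assignments for everything else), and the core-reduction and even-cycle steps are correct and complete. But two genuine gaps remain, at exactly the points where the real content of the theorem lies. First, in the sufficiency for $\Theta_{2,2,2r}$ you assert that once the poles are coloured, ``the long even path supplies the remaining freedom exactly as in the cycle analysis.'' This is false as stated: extending a colouring to a path whose two endpoints are already coloured can fail even though all internal lists have size $2$ (endpoint colours $1$ and $3$ with internal lists $\{1,2\}$, $\{2,3\}$ force a conflict). The choice of pole colours must therefore be coordinated with the lists along the long path as well as with $L(a_1)$, $L(a_2)$, and this three-way interaction is precisely the delicate case analysis; it cannot be delegated to the cycle argument. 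Second, the necessity direction is deferred wholesale: you name odd cycles and $\Theta_{2,4,4}$, but the proof needs (i) non-colourable $2$-uniform assignments for \emph{every} forbidden theta --- including the all-odd-length ones such as $\Theta_{1,3,3}$, which are bipartite, so the odd-cycle obstruction does not apply, and including the infinite families $\Theta_{2,2a,2b}$ and $\Theta_{2a,2b,2c}$, whose members pairwise do not contain one another as subgraphs (the only proper subgraphs of a theta graph with minimum degree $2$ are its three cycles), so one explicit example settles nothing; (ii) non-colourable assignments for two cycles meeting in at most one vertex, possibly joined by a path, since cores need not be $2$-connected; and (iii) a proof that every connected core of minimum degree at least $2$ other than the admissible ones contains one of these configurations. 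None of (i)--(iii) is carried out, and they constitute most of the work in \cite{ErRuTa80}.
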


\begin{lemma}
\label{lem:equitable-colouring-cycles}
Let $k \in \mathbb{N}$ with $k\geq 2$. If $G$ is a bipartite graph with $\Delta(G)\le 2$ then $G$ is equitably  $k$-choosable.
\end{lemma}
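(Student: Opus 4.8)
The plan is to start from the structural fact that a bipartite graph $G$ with $\Delta(G)\le 2$ is a disjoint union of paths and even cycles. I would then treat the two ranges $t=2$ and $t\ge 3$ separately, because (as explained at the end) the even cycles behave very differently in these two ranges. In both cases the idea is the same: delete vertices in small groups so as to reduce $G$ to a base graph already known to be equitably choosable, while controlling each deletion through the neighbourhood inequalities of Lemma~\ref{lem:Kostochka-Pelsmajer-West_2} and Lemma~\ref{lem:vertex_partition}.

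For every $t\ge 3$ I would build a special $3$-partition $S_1\cup\cdots\cup S_{\eta+1}$ of $V(G)$ satisfying inequality~(\ref{ineq_one}) with $k=3$, and then invoke Lemma~\ref{lem:vertex_partition} to obtain equitable $t$-choosability for all $t\ge 3$ simultaneously. The sets $S_j$ with $j\ge 2$ are produced by repeatedly peeling off three vertices at a time: three consecutive vertices from an end of a path, or three consecutive vertices of a cycle. In each such triple at least one vertex has no neighbour outside the triple (an end vertex of a path, or the middle one of three consecutive cycle vertices), and the other two have at most one and at most two neighbours outside; hence the triple can be ordered to satisfy~(\ref{ineq_one}). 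A short case analysis shows that as long as more than three vertices remain one can always peel off a legal triple — deleting a triple from a cycle turns it into a path, and leftover components of size at most $2$ can be combined — so the process ends with a set $S_1$ of at most three vertices, which is coloured rainbow directly.

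For $t=2$ I would instead reduce $G$ to the union of its even cycles. Applying Lemma~\ref{lem:Kostochka-Pelsmajer-West_2} with $k=2$ repeatedly, I delete a pair $\{x_1,x_2\}$ with $|N_G(x_1)\setminus S|=0$ and $|N_G(x_2)\setminus S|\le 1$: from each path I remove an end vertex together with its neighbour, and once only isolated vertices remain I remove them two at a time. This erases every path component and terminates at a base graph $G'$ consisting of the even cycles of $G$ together with at most one isolated vertex. Since every even cycle has a perfect matching, $G'$ has a matching saturating all but at most one of its vertices, and each component of $G'$ is $2$-choosable by Theorem~\ref{thm:two-list-colourable} (an even cycle, or a single vertex), so $G'$ is $2$-choosable. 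Thus $G'$ is equitably $2$-choosable by Lemma~\ref{lem:two-equitable-list-colourable}, and unwinding the deletions through Lemma~\ref{lem:Kostochka-Pelsmajer-West_2} yields that $G$ is equitably $2$-choosable.

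The main obstacle is precisely the even cycles in the case $t=2$. Because every vertex of a cycle has degree $2$, deleting any pair of its vertices leaves a vertex with a neighbour outside the pair, so no pair meets the requirement $|N_G(x_1)\setminus S|\le 0$ of Lemma~\ref{lem:Kostochka-Pelsmajer-West_2}; a union of even cycles therefore cannot be reduced at all and must be coloured directly, which is exactly what forces the matching-based Lemma~\ref{lem:two-equitable-list-colourable}. I expect the most delicate part of writing this up to be the bookkeeping that guarantees the base graph contains \emph{at most one} leftover isolated vertex (so that the matching condition of Lemma~\ref{lem:two-equitable-list-colourable} holds). For $t\ge 3$ the obstacle evaporates, since any three consecutive cycle vertices contain one whose two neighbours both lie inside the triple, and this is what makes the special $3$-partition available for every such $G$.
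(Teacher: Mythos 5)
Your proof is correct, but it reaches the result by a partly different route than the paper. For $k=2$ both arguments converge on the same key step (Theorem~\ref{thm:two-list-colourable} plus the matching criterion of Lemma~\ref{lem:two-equitable-list-colourable}), but you get to the base graph by \emph{deleting} vertex pairs through Lemma~\ref{lem:Kostochka-Pelsmajer-West_2} and then unwinding, whereas the paper goes the opposite way: it \emph{adds} edges to merge all path components into a single path, obtaining a supergraph $G'$ that is a disjoint union of even cycles and at most one path; $G'$ then has a matching of size $\left\lfloor |V(G')|/2\right\rfloor$ outright, and equitable choosability transfers back to $G$ for free because $G$ is a spanning subgraph of $G'$ and equitability depends only on colour-class sizes. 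The supergraph trick is shorter and eliminates exactly the bookkeeping you flagged as delicate (ensuring at most one unmatched vertex): merging the paths makes that automatic. For $k\ge 3$ the two proofs genuinely diverge: the paper simply invokes the Kierstead--Kostochka result (Theorem~\ref{thm:Kierstead-Kostochka}(i), applicable since $\Delta(G')\le 2$), while you construct a special $3$-partition by peeling off triples and apply Lemma~\ref{lem:vertex_partition}. Your version has the merit of being self-contained within the paper's own machinery (it is, in fact, the same peeling technique the paper uses to prove Lemma~\ref{lem:equitable-colouring-p_n_one-p_two} for the family ${\cal G}_1$), and it yields all $t\ge 3$ uniformly from a single partition; the paper's citation is of course quicker. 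Your closing observation about why even cycles block the $k=2$ reduction --- every vertex of a cycle has degree $2$, so no pair can satisfy $|N_G(x_1)\setminus S|\le 0$ --- is accurate and is precisely the obstruction that both proofs must route around via Lemma~\ref{lem:two-equitable-list-colourable}.
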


\begin{proof}
Observe first that each component of $G$ is either an even cycle or a path. If  $G$ has more than one component that is a path, let $G'$ be a graph obtained from $G$ by adding  edges so that $G'$ has  one component that is a path and all other components are even cycles. In the case when $G$ has at most one component that is a path, we assume $G'=G$. We will show that $G'$ is equitably  $k$-choosable for any $k\ge 2$. By Theorem \ref{thm:two-list-colourable}, being applied to each connected component of $G'$, $G'$ is 2-choosable (it is clear that if each component is 2-choosable then the whole graph is also 2-choosable). Since  $G'$ has a matching of size $\left\lfloor |V(G')|/2\right\rfloor$ then $G'$ is equitably  $2$-choosable by Lemma \ref{lem:two-equitable-list-colourable}. Furthermore,  Theorem \ref{thm:Kierstead-Kostochka}(i) follows that $G'$ is equitably  $k$-choosable for every $k\ge 3$ (since $\Delta(G')\le 2$). Hence the arguments that $G'$ is equitably  $k$-choosable for any $k\ge 2$ and that $G$ is a spanning subgraph of $G'$ imply that $G$ is equitably  $k$-choosable for any $k\ge 2$. 
\end{proof}


Now, we define ${\cal G}_{1}$ to be a family of all grids $P_{n_1}\square P_2$ and all graphs resulting from  grids $P_{n_1}\square P_2$ by removing one vertex of minimum degree, taken over all $n_1\in \mathbb{N}$. The following results will be used in the next section to determine equitable list vertex arboricity of grids.


\begin{lemma}
\label{lem:equitable-colouring-p_n_one-p_two}
Let $k \in \mathbb{N}$ with $k\geq 3$. If every component of a graph $G$ is in ${\cal G}_{1}$ then $G$  is equitably  $k$-choosable.
\end{lemma}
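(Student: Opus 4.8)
The plan is to reduce everything to a single application of Lemma~\ref{lem:vertex_partition}. Every member of ${\cal G}_1$ has maximum degree at most $3$, so $\Delta(G)\le 3$; hence for $k\ge 4$ the assertion is already immediate from Theorem~\ref{thm:Kierstead-Kostochka}(i) with $r=3$. The substance is therefore to build a special $3$-partition $S_1\cup\cdots\cup S_{\eta+1}$ of $V(G)$ together with, for each $j\ge 2$, an ordering of $S_j$ satisfying~(\ref{ineq_one}); by Lemma~\ref{lem:vertex_partition} this yields that $G$ is equitably $t$-choosable for every $t\ge 3$ at once, which is exactly the claim.

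I would produce this partition constructively, by repeatedly \emph{peeling} a removable triple and reading off the partition in reverse: the $j$-th peeled triple becomes $S_{\eta+2-j}$, and the final $\le 3$ leftover vertices become $S_1$. The point is that at the moment a triple $S$ is removed, the vertices still present are precisely $S_1\cup\cdots\cup S_{\eta+2-j}$, so the peeling requirement ``$S$ admits an ordering $x_1,x_2,x_3$ with $|N_G(x_i)\setminus S|\le i-1$ in the current graph'' is literally condition~(\ref{ineq_one}). Writing $L_n=P_{n}\square P_2$ with columns $1,\dots,n$ and vertices $(i,1),(i,2)$, the two reductions that drive the peeling are: from a full ladder remove $S=\{(n,1),(n,2),(n-1,1)\}$ (in this order the three vertices have $0,1,2$ outside-neighbours), leaving $L_{n-1}$ with the corner $(n-1,1)$ deleted, again a member of ${\cal G}_1$; and from a ladder-minus-corner (say $(n,1)$ missing) remove the two last columns $S=\{(n,2),(n-1,2),(n-1,1)\}$ (outside-neighbours $0,1,1$), leaving $L_{n-2}\in{\cal G}_1$. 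Applied alternately, these strip one triple at a time while keeping every component inside ${\cal G}_1$.

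Once a component shrinks below four vertices it is one of $P_1,P_2,P_3$, and I would clear these, together with any other small components, by removable triples that may straddle several components: since distinct components share no edge, a vertex drawn from a fresh component contributes at most one outside-neighbour and can always play the role of $x_3$. For instance, a $P_2$-component together with one endpoint of another small path, or three isolated vertices, forms a removable triple whose deletion again leaves only ${\cal G}_1$-components. Continuing until at most three vertices remain produces the desired special $3$-partition, and the partition is automatically special because every peeled set is a triple and only the final leftover $S_1$ can be undersized.

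The main obstacle is purely the bookkeeping of these reductions: one has to check that \emph{every} configuration with more than three vertices admits a removable triple whose deletion keeps all components in ${\cal G}_1$, which forces the precise choice of the two end columns in the large cases and a short case analysis over the finitely many small configurations ($P_1,P_2,P_3$, $C_4=L_2$, and the $2\times 3$ grid minus a corner). Verifying the $0/{\le}1/{\le}2$ outside-degree ordering in each of these cases is routine but is where the argument actually lives; no separate treatment of equitability or of base colourings is needed, since Lemma~\ref{lem:vertex_partition} absorbs all of that.
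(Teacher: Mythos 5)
Your proposal is correct and follows essentially the same route as the paper: both build a special $3$-partition by repeatedly removing a triple whose outside-neighbour counts are $0,\le 1,\le 2$ while keeping every remaining component in ${\cal G}_{1}$, and then invoke Lemma~\ref{lem:vertex_partition}. In fact your two ladder reductions select exactly the triples that the paper's induction (organized by minimum degree rather than by explicit coordinates) produces, so the difference is purely presentational.
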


\begin{proof}
We show that there is a special $3$-partition of $G$ that fulfills the assumptions of Lemma \ref{lem:vertex_partition}, i.e. there are disjoint sets $S_1, \ldots ,S_{\eta+1}$ such that the following conditions hold

\begin{itemize}
\item $V(G)=S_1\cup \cdots \cup S_{\eta+1}$; 
\item $|S_1|\le 3$ and $|S_j|=3$ for $j \in[2, \eta+1]$; 
\item there is an ordering of vertices of each set $S_j$, say $x^j_1,x^j_2 ,x^j_3$, fulfilling the inequality $|N_G(x_i^j) \cap (S_1\cup\ldots \cup S_{j-1})| \leq i-1$ for $i\in [3]$,
\end{itemize} 
and hence, by Lemma \ref{lem:vertex_partition}, $G$  is equitably  $k$-choosable for any $k\ge 3$.
We prove the existence of the partition by induction on the number of vertices of $G$. It is easy to see that it is true for a graph with at most 3 vertices. Thus suppose that if every component of a graph is in ${\cal G}_{1}$ and the graph has  less than $n$ vertices, $n\geq 4$, then  it has  a special $3$-partition that fulfills the assumptions of Lemma \ref{lem:vertex_partition}. Let $G$ be an $n$-vertex graph having every component in ${\cal G}_{1}$. We show that there is a set  $S$ in $G$, say $\{x_1,x_2,x_3\}$, such that $|N_G(x_i)\setminus S| \leq i-1$ for $i\in [3]$ and every component of $G-S$ is in ${\cal G}_{1}$. Thus, by induction, the lemma  follows. 

Let $x_1$ be a vertex of the minimum degree in $G$, thus $\deg_G(x_1)\le 2$. Suppose first that $\deg_G(x_1)=2$. In this case each component has at least four vertices. Let $x_2,x_3$ be the neighbors of $x_1$ such that $\deg_G(x_2)=2$ and $\deg_G(x_3)\le 3$. Let  $S=\{x_1,x_2,x_3\}$, then $|N_G(x_1)\setminus S| = 0,|N_G(x_2)\setminus S| \leq 1$ and $|N_G(x_3)\setminus S| \leq 2$. Observe that every component of $G-S$ is in ${\cal G}_{1}$, so by our induction hypothesis $G-S$ has a special $3$-partition that fulfills the assumptions of Lemma \ref{lem:vertex_partition}, and so we are done.

Suppose now that  $\deg_G(x_1)=1$. Let $x_2$ be the neighbor of $x_1$. If $\deg_G(x_2)=3$ then let $x_3$ be the neighbor of $x_2$ of degree 2. Let $S=\{x_1,x_2,x_3\}$. Hence every component of $G - S$ is in ${\cal G}_{1}$ and we see that the vertices of $S$ satisfy $|N_G(x_i)\setminus S| \leq i-1$ for $i\in [3]$. If $\deg_G(x_2)=2$ then let $x_3$ be the neighbor of $x_2$, other than $x_1$. Observe that in this case the vertices $x_1,x_2,x_3$ form a component of $G$. Again  $S=\{x_1,x_2,x_3\}$ satisfies $|N_G(x_i)\setminus S|=0 \leq i-1$ for $i\in [3]$, and so, by induction hypothesis, $G$ has a special $3$-partition that fulfills the assumptions of Lemma \ref{lem:vertex_partition}. If  $\deg_G(x_2)=1$ then as  $x_3$ in $S$ we  put a vertex of the minimum degree in $G- \{x_1,x_2\}$. 

Finally suppose that  $\deg_G(x_1)=0$. In this case let $x_2,x_3$ be two adjacent vertices of degree at most two. If there are no such vertices then $G$ is an edgeless graph and we can choose  $x_2,x_3$ arbitrarily. Similarly as above we  can see that every component of $G - S$ is in ${\cal G}_{1}$ and $S$ satisfies $|N_G(x_i)\setminus S| \leq i-1$ for $i\in [3]$. It implies that $G$ has a special $3$-partition that fulfills the assumptions of Lemma \ref{lem:vertex_partition}, and so $G$ is equitably   $k$-choosable for $k\ge 3$. 
\end{proof}

It should be mentioned here that for each component of graph $G$ in ${\cal G}_1$, we have  $\Delta (G)\le 3 $. Thus, by Theorem \ref{thm:Kierstead-Kostochka}, such a graph is equitably $k$-choosable for $k\ge 4$. 
Hence Lemma~\ref{lem:equitable-colouring-p_n_one-p_two} extends this result to $k\geq 3$.

Let $n_1,n_2\in \mathbb{N}$, $n_2 \geq 2$, and $\ell\in [0,n_1-1]$. The symbol $(P_{n_1}\square P_{n_2},\ell)$ denotes a graph obtained from $P_{n_1}\square P_{n_2}$ by the delation of a set $V^\prime$ (cf. Fig.~\ref{fig:familyG2}), where 
$$V^\prime=\{(n_1-p,n_2):\; p\in [0,\ell-1]\}\cup \{(n_1-p,n_2-1):\; p\in [0,\ell-1]\}.
$$


\noindent Observe  that $(P_{n_1}\square P_{n_2},0)$ is a grid $P_{n_1}\square P_{n_2}$.

Let ${\cal G}_{2}=\{(P_{n_1}\square P_{n_2},\ell):n_1\ge 1,n_2\ge 2,\ell\in [0, n_1-1]\}$.

\begin{figure}
    \centering
    \includegraphics{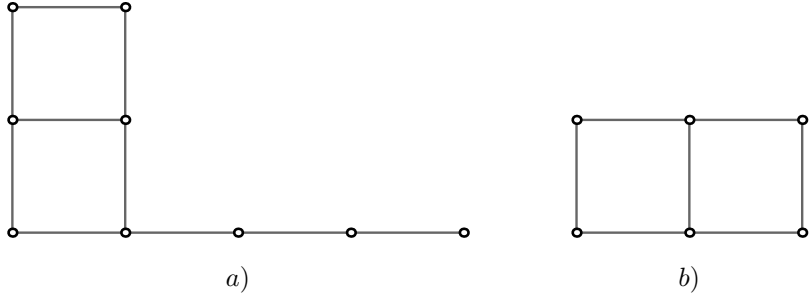}
    \caption{A graph a)$(P_5 \square P_3,3)$ and b) $(P_5 \square P_2,2)$ being isomorphic to $P_3 \square P_2$.}
    \label{fig:familyG2}
\end{figure}

\begin{lemma}
\label{lem:equitable-colouring-p_n_one-p_n_two}
Let $k \in \mathbb{N}$ with $k\geq 4$.
If  each component of a graph $G$ is in ${\cal G}_{2}$ then $G$  is equitably  $k$-choosable.
\end{lemma}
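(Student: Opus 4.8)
The plan is to mirror the proof of Lemma~\ref{lem:equitable-colouring-p_n_one-p_two}: I would show by induction on $|V(G)|$ that a graph $G$ all of whose components lie in ${\cal G}_2$ admits a special $4$-partition $S_1\cup\cdots\cup S_{\eta+1}$ satisfying the hypotheses of Lemma~\ref{lem:vertex_partition} with $k=4$, and then invoke Lemma~\ref{lem:vertex_partition} to conclude that $G$ is equitably $t$-choosable for every $t\ge 4$, in particular for every $k\ge 4$. Concretely, it suffices to exhibit, whenever $|V(G)|\ge 5$, a set $S\subseteq V(G)$ with $|S|=4$ together with an ordering $x_1,x_2,x_3,x_4$ of $S$ such that $|N_G(x_i)\setminus S|\le i-1$ for $i\in[4]$ and such that every component of $G-S$ again lies in ${\cal G}_2$. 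The induction hypothesis applied to $G-S$ then yields the partition, with $S$ appended as the last block and the residue $|V(G)|\bmod 4$ absorbed into the unique small block $S_1$, whose vertices carry no constraint in Lemma~\ref{lem:vertex_partition}. Since $\Delta(G)\le 4$ for every graph with all components in ${\cal G}_2$, the bound $|N_G(x_4)\setminus S|\le 3$ is essentially free, so the work lies in choosing $S$ so that $x_1$ has no neighbour outside $S$ and $x_2,x_3$ have at most one, respectively two, such neighbours.

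Throughout I would use coordinates $(a,b)$ with $a\in[1,n_1]$ and $b\in[1,n_2]$, so that $(P_{n_1}\square P_{n_2},\ell)$ is the grid whose top two rows $b\in\{n_2-1,n_2\}$ are truncated to columns $a\le n_1-\ell$. First I would treat the generic case, in which the chosen component $(P_{n_1}\square P_{n_2},\ell)$ satisfies $m:=n_1-\ell\ge 3$. Here I take $S$ to be the $2\times 2$ block at the top-right corner of the two top rows, namely $S=\{(m,n_2),(m-1,n_2),(m,n_2-1),(m-1,n_2-1)\}$, ordered as $x_1=(m,n_2)$, $x_2=(m-1,n_2)$, $x_3=(m,n_2-1)$, $x_4=(m-1,n_2-1)$. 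A direct inspection of the at most four grid-neighbours of each vertex, using that the right neighbours of $x_1,x_3$ lie in the deleted notch and that the upper neighbours of the top row do not exist, gives $|N_G(x_i)\setminus S|\le i-1$ (the counts are $0,1,1,2$). Moreover $G-S=(P_{n_1}\square P_{n_2},\ell+2)$, which lies in ${\cal G}_2$ because $m\ge 3$ guarantees $\ell+2\le n_1-1$. This single move handles all but a bounded number of corner configurations.

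The remaining configurations are boundary ones, where the step has almost reached the left edge or the grid is thin, and these I would dispatch by small peels of the same flavour. When $m=2$ the two top rows together contain exactly four vertices, so I remove all of them, obtaining the full grid $P_{n_1}\square P_{n_2-2}\in{\cal G}_2$ (a path when $n_2-2=1$). When $m=1$ the two top rows form a $2$-vertex tower, which I remove together with the top two cells of the rightmost column, leaving $(P_{n_1}\square P_{n_2-2},1)\in{\cal G}_2$ for $n_2\ge 4$. Thin components ($n_2\le 3$) and one-dimensional components ($n_1=1$, i.e. paths) degenerate to ladders and paths of maximum degree at most $2$, on which the $2\times 2$ block peel, respectively an end-of-path peel, again works; any residue of size $1,2,3$ left at the very end becomes the small first block $S_1$. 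I would also note that any component with at most four vertices can simply be taken as a whole block, since its vertices then have no neighbour outside $S$ and every ordering works, which lets the induction consume disconnected pieces.

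The main obstacle is the case analysis itself rather than any single estimate: the family ${\cal G}_2$ is rigid, admitting only a right-hand notch of depth exactly two, so one must check that every peel preserves this precise shape and never produces a forbidden configuration, such as a left-hand step, a notch of depth other than two, or an isolated vertex. In particular the interaction between the notch boundary ($m\le 2$) and the top boundary ($n_2\le 3$), together with the requirement that at most one partition block be smaller than four, is what forces the split into subcases; each individual subcase is then a routine neighbour count of exactly the kind carried out in the generic case above.
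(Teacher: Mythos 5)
Your proposal is correct and follows essentially the same route as the paper's proof: induction on $|V(G)|$ to build a special $4$-partition satisfying Lemma~\ref{lem:vertex_partition}, peeling the same $2\times 2$ corner block of the notch (with the identical ordering and neighbour counts $0,1,1,2$) in the generic case, and dispatching the thin, one-column, and path degeneracies by small separate peels, with leftover small components absorbed into the unconstrained block $S_1$. Your minor deviations --- treating $m=2$ as its own case rather than folding it into $n_1-\ell\ge 2$, and notching the rightmost rather than the leftmost column when $m=1$ --- are cosmetic and do not change the argument.
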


\begin{proof}
We show that there is a special $4$-partition of $G$ that fulfills the assumptions of Lemma \ref{lem:vertex_partition}.
We prove it by induction on the number of vertices. Observe that every graph in ${\cal G}_{2}$ has at least two vertices and it is easy to see that if $G$ has at most 4 vertices then $G$ has a special $4$-partition  that fulfills the assumptions of Lemma \ref{lem:vertex_partition}. Suppose that the assertion is true for graphs with less that $n$ vertices, $n\geq 5$. Let $G$ be a graph with $n$ vertices that satisfies assumptions of the lemma. We show that  there is a set $S$, say $\{x_1,x_2,x_3,x_4\}$, such that $|N_G(x_i)\setminus S| \leq i-1$ for $i\in [4]$ and each component of $G-S$ is in ${\cal G}_{2}$. 

We choose the set $S$ as follows. 
First suppose that there is a component  $(P_{n_1}\square P_{n_2},\ell)$  of $G$ such that $n_1-\ell\ge 2$ and $n_2\ge 2$. Let us consider the set $S=\{x_1,x_2,x_3,x_4\}$ with $x_1=(n_1-\ell,n_2)$, $x_2=(n_1-\ell-1,n_2)$, $x_3=(n_1-\ell,n_2-1)$, and $x_4=(n_1-\ell-1,n_2-1)$. 

\noindent Thus 
$|N_G((n_1-\ell,n_2))\setminus S|= 0,|N_G((n_1-\ell-1,n_2))\setminus S|\le 1,|N_G((n_1-\ell,n_2-1))\setminus S|\le 1$ and $|N_G((n_1-\ell-1,n_2-1))\setminus S|\le 2$.
Furthermore, every component of $G-S$ is in ${\cal G}_{2}$ and hence, by the induction hypothesis, $G$ has $4$-partition of $G$ that fulfills the assumptions of Lemma \ref{lem:vertex_partition}.
If there is a component   $(P_{n_1}\square P_{n_2},\ell)$ of $G$ such that $n_1-\ell=1$ and $n_2\ge 4$ then we put $x_1=(1,n_2)$, $x_2=(1,n_2-1)$, $x_3=(1,n_2-2)$, and $x_4=(1,n_2-3)$. 
Every component of $G-S$ is in ${\cal G}_{2}$ and 
$|N_G((1,n_2))\setminus S|= 0,|N_G((1,n_2-1))\setminus S|=0,|N_G((1,n_2-2))\setminus S|\le 1$ and $|N_G((1,n_2-3))\setminus S|\le 2$, so by the induction hypothesis, the assumptions of Lemma \ref{lem:vertex_partition}  are satisfied.
Otherwise, every component of $G$ is a path. If there is a component with at least four vertices then  four consecutive vertices of the path form 
the set $S$ that satisfies $|N_G(x_i)\setminus S| \leq i-1$ for $i\in [4]$. If each component of $G$ has less than four vertices then, to obtain $S$, we  take all vertices of one component and we next complete the set $S$ by vertices of some other component or even components, if the number of vertices chosen to set $S$ is still to small. It is easy to see that also in such a case the assumptions of Lemma \ref{lem:vertex_partition} are fulfilled, which finishes the proof.
\end{proof}

\begin{lemma}
\label{lem:equitable-colouring-p_n_one-p_n_two-with_three}
Let $n_1,n_2, t\in \mathbb{N}$. If $G$ is a graph with $t$ components such that each one is isomorphic to $P_{n_1}\square P_{n_2}$ then $G$  is equitably  $3$-choosable.
\end{lemma}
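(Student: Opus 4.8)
The plan is to work with the equivalent formulation of equitable $3$-choosability recorded in Lemma~\ref{lem:rainbow-colourable}: it suffices to show that for every $3$-uniform list assignment $L$ there is a proper $L$-colouring $c$ of $G$ in which no colour is used on more than $\lceil |V(G)|/3\rceil$ vertices, where $|V(G)|=t\,n_1n_2$. I would first dispose of the \emph{thin} grids. If $n_1=1$ or $n_2=1$, then every component is a path, so $G$ is bipartite with $\Delta(G)\le 2$ and Lemma~\ref{lem:equitable-colouring-cycles} gives the claim for $k=3$. If $\min\{n_1,n_2\}=2$, then, using the commutativity of $\square$, every component equals $P_{n_1}\square P_2$ and hence lies in ${\cal G}_{1}$, so Lemma~\ref{lem:equitable-colouring-p_n_one-p_two} applies. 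The substance of the lemma is therefore the case $n_1,n_2\ge 3$, which I treat by a direct construction.

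The reason a direct construction is unavoidable is that the peeling machinery is useless here. A set witnessing Lemma~\ref{lem:Kostochka-Pelsmajer-West_2}, or a block of a special $3$-partition as in Lemma~\ref{lem:vertex_partition}, would require a vertex $x_1$ all of whose neighbours lie inside a $3$-element set $S$; such a vertex has degree at most $2$, i.e.\ it is a corner of a grid component, which forces $S$ to be that corner together with its two neighbours. One checks immediately that in a grid with $n_1,n_2\ge 3$ the other two vertices of such an $S$ each keep two neighbours outside $S$, so the ordering with neighbourhood bounds $0,1,2$ cannot exist. Thus no special $3$-partition is available for thick grids, and neither Lemma~\ref{lem:Kostochka-Pelsmajer-West_2} nor Lemma~\ref{lem:vertex_partition} can start.

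For $n_1,n_2\ge 3$ I would argue by induction on the number $m=|V(G)|$ of coloured vertices, colouring inside the larger family $\mathcal{F}$ of disjoint unions of grids from which a staircase-shaped set of cells has been deleted; formally, the deleted set in each component is an order ideal of the grid, so $\mathcal{F}$ contains every $P_{n_1}\square P_{n_2}$ and is closed under deleting a single corner cell. In the inductive step I pick a vertex $v$ that is lexicographically maximal (first coordinate maximal, then second): its right and upper neighbours are absent, so $v$ has at most two neighbours in the region and $G-v\in\mathcal{F}$. By the induction hypothesis $G-v$ has a proper $L$-colouring with every colour used at most $\lceil (m-1)/3\rceil$ times, and since $v$ has at most two coloured neighbours while $|L(v)|=3$, at least one colour in $L(v)$ is admissible for $v$; the task is to pick such a colour so that the bound $\lceil m/3\rceil$ is respected.

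The main obstacle is exactly this global balance. Writing $c_\alpha$ for the current count of colour $\alpha$, one has $\sum_\alpha c_\alpha=m-1$, so on average the counts lie below $\lceil m/3\rceil$ and some colour is under the cap; the difficulty is that this under-filled colour need not be one of the (at least one) colours admissible at $v$. I expect to resolve this by a Kempe-type exchange: if every admissible colour $\alpha$ already reaches $\lceil m/3\rceil$, take an alternating $\alpha/\beta$ path starting from a vertex coloured $\alpha$ and swapping it towards a vertex able to take a non-full colour, thereby freeing one $\alpha$-slot for $v$ without destroying properness. Verifying that such a recolouring path always exists on a grid-minus-staircase carrying arbitrary $3$-lists — equivalently, that the potential measuring colour excess can always be decreased — is the delicate heart of the argument; the total size $|V(G)|=t\,n_1n_2$ fixes the cap $\lceil |V(G)|/3\rceil$ and provides the slack for the exchange, and I note that this scheme does not actually use that the $t$ components are pairwise isomorphic, so it would yield the stated lemma (and slightly more) once the exchange step is made rigorous.
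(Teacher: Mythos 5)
Your reduction of the thin cases is fine (and your use of Lemma~\ref{lem:equitable-colouring-cycles} for $n_i=1$ is actually more careful than the paper, which cites only Lemma~\ref{lem:equitable-colouring-p_n_one-p_two}), and your observation that no special $3$-partition in the sense of Lemma~\ref{lem:vertex_partition} can exist in a grid with $n_1,n_2\ge 3$ is correct and explains why a direct construction is needed. But for the thick case your proof has a genuine gap, and you have located it yourself: the ``Kempe-type exchange'' that is supposed to rescue the vertex-by-vertex induction is never constructed, and there is a concrete reason to doubt it can be, at least in the form you sketch. In the list setting an alternating $\alpha/\beta$ recolouring path is not available in general: swapping colours along such a path requires both $\alpha$ and $\beta$ to belong to the list of \emph{every} vertex on the path, and with arbitrary $3$-uniform lists an $\alpha$-coloured vertex need not have $\beta$ in its list at all. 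Moreover the colour palette is unbounded (lists are arbitrary), so ``some colour is under the cap'' gives you no control over \emph{which} vertices can absorb the excess, nor any guarantee that a vertex able to take an under-cap colour is connected to $v$'s neighbourhood by a chain of legal swaps. Since in two of the three residue classes of $m$ modulo $3$ the cap does not increase when you add $v$, this unproven exchange step is needed at a constant fraction of all induction steps; it is the whole theorem, not a technical afterthought.

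The paper avoids recolouring entirely by never working one vertex at a time. It colours the first $r=n_1\bmod 3$ columns by Lemma~\ref{lem:equitable-colouring-p_n_one-p_two}, splits the remaining vertices of each component into horizontal triples $S^p_{ij}$, and colours these triples in a fixed linear order under two local rules: make each triple rainbow if possible, and otherwise ensure the repeated colour of $S^p_{ij}$ is absent from $S^p_{ij-1}$ (and vice versa when $S^p_{ij-1}$ is also non-rainbow). A short list-counting argument, using only the lists of the triple just coloured and the colours of the triple directly below it, shows these rules can always be met; then any non-rainbow triple can be merged with its predecessor and repartitioned into two rainbow triples, yielding a rainbow special $3$-partition, and Lemma~\ref{lem:rainbow-colourable} finishes. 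The structural lesson is that equitability is enforced combinatorially (rainbow blocks of size $3$) at colouring time, looking only one block back, rather than repaired globally afterwards. If you want to salvage your approach, you would need to replace the Kempe step by an argument of this bookkeeping type; as written, the proposal does not prove the lemma.
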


\begin{figure}[htb]
\begin{center}
\includegraphics[scale=1]{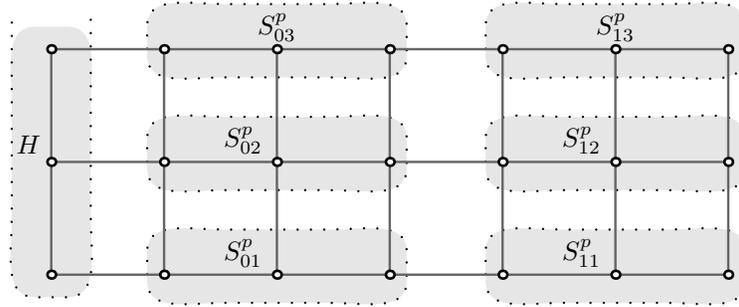}
\caption{Illustration for the proof of Lemma \ref{lem:equitable-colouring-p_n_one-p_n_two-with_three}; $G^p=P_7 \square P_3$.}
\label{two_dimensional_three-colouring}
\end{center}
\end{figure}

\begin{proof}
If $n_1\leq 2$ or $n_2\leq 2$ then the proof follows from Lemma \ref{lem:equitable-colouring-p_n_one-p_two}. Thus we may assume that $n_1\ge 3$ and $n_2\ge 3$. Let $G^p=P^p_{n_1}\square P^p_{n_2}$ for $p\in[t]$ be components of $G$ and $\{(i,j)^p:i\in[n_1],j\in[n_2]\}$ be the vertex set of the component $G^p$. Let $n_1=3q+r$ where $ r\in [0,2]$ and  let $L$ be a $3$-uniform  list assignment for the graph $G$. We show that there is a proper $L$-colouring $c$  such that $(G,c)$ has a rainbow  3-partition. Let $H$ be a subgraph of $G$ induced by the set $\{(1,j)^p:j\in[n_2],p\in[t]\}$ if $r=1$, and induced by the set $\{(1,j)^p,(2,j)^p:j\in[n_2],p\in[t]\}$ if $r=2$. 
Moreover, let $S^p_{ij}=\{(3i+1+r,j)^p,(3i+2+r,j)^p,(3i+3+r,j)^p\}$ where $i\in [0,q-1], j\in [n_2],p\in[t]$ (cf. Fig.~\ref{two_dimensional_three-colouring}).

First, we colour the vertices of $H$. Let $c^{\prime}$ be an equitable proper $L$-colouring  of $H$ guaranteed by Lemma \ref{lem:equitable-colouring-p_n_one-p_two}. Thus, by Lemma \ref{lem:rainbow-colourable}, there is a rainbow  3-partition of $(H,c^{\prime})$. After this step all vertices of the first and the second column are coloured if $r=2$, all vertices of the first column are coloured if $r=1$, and graph is uncoloured if $r=0$.  Next, in each component,  we colour uncoloured vertices of the first row, i.e., $(r+1,1)^p,(r+2,1)^p,\ldots,(n_1,1)^p$ for $p\in[t]$. We properly colour these vertices in such a way that the sets $S_{i1}^p$, $i\in [0,q-1]$ are rainbow. Now  we divide the uncoloured vertices of each component into 3-element subsets $S^p_{ij}$ where $i\in [0,q-1], j\in [2,n_2]$, and $p\in[t]$.
In each component we define linear ordering $\prec ^p$ on these sets in the following way: $S^p_{ij}\prec S^p_{rs}$ if ($j<s$) or ($j=s$ and $i<r$). According to this ordering, we properly colour vertices of each set $S_{ij}^p$ with the following rules.
\begin{itemize}
    \item If it is only possible, we colour vertices in $S^p_{ij}$ in such a way that vertices of this set obtain different colours.
    \item If we cannot colour vertices in $S^p_{ij}$ in such a way that $S_{ij}^p$ is rainbow then we color vertices in $S^p_{ij}$ in such a way that
    two vertices have the same colour, let us say $c_1$, and there is no vertex coloured with $c_1$ in $S^p_{ij-1}$. Moreover, if also
    the set $S^p_{ij-1}$ is not rainbow, i.e. two vertices in $S^p_{ij-1}$ are coloured with the same colour, let us say $c_2$, then there is no vertex coloured with $c_2$ in $S^p_{ij}$.
\end{itemize}

We will show that such a colouring exists. Let $c^{\prime\prime}$ be a proper $L$-colouring of $G-H$ such that these rules are maintained. 
Suppose that we are at the step when we have just coloured vertices in $S^p_{ij}$, so vertices in every set that precedes $S^p_{ij}$, with respect to $\prec^p$, and the vertices in $S^p_{ij}$ are coloured, the vertices in $S^p_{i+1j}$ (or $S^p_{0j+1}$ when $i=q-1$) are uncoloured. To simplify notation let $S^p_{ij}=\{(x,j),(x+1,j),(x+2,j)\}$ and $S^p_{ij-1}=\{(x,j-1),(x+1,j-1),(x+2,j-1)\}$. Let $c^{\prime\prime}((x,j))=c^{\prime\prime}((x+2,j))=c_1$ and $c^{\prime\prime}((x+1,j))=b_1$. First we show that there is no vertex coloured with $c_1$ in $S^p_{ij-1}$. Since vertices $(x,j)$ and $(x,j-1)$ are adjacent, it follows that $c_1\neq c^{\prime\prime}((x,j-1))$. Similarly, $c_1\neq c^{\prime\prime}((x+2,j-1))$.  Now we need to show that $c_1\neq c^{\prime\prime}((x+1,j-1))$. Since we use $c_1$ to colour $(x+2,j)$ then we necessarily have $L((x+2,j))=\{c_1,b_1,c^{\prime\prime}((x+2,j-1))\}$. If $c_1= c^{\prime\prime}((x+1,j-1))$ then we could colour $(x+1,j)$ with colour different from $c_1$ and $b_1$ and next colour $(x+2,j)$ with $b_1$ and so we would colour the vertices in $S^p_{ij}$ with different colours, a contradiction. To finish the reasoning we show that if $c^{\prime\prime}((x,j-1))=c^{\prime\prime}((x+2,j-1))=c_2$ then  there is no vertex coloured with $c_2$, in $S^p_{ij}$. It is easy to see that $c_2\neq c^{\prime\prime}((x,j))$  and $c_2\neq c^{\prime\prime}((x+2,j))$. As we observed above $L((x+2,j))=\{c_1,b_1,c^{\prime\prime}((x+2,j-1))\}$.
Since each vertex has  the list consisting of three different colours, we have $b_1\neq c^{\prime\prime}((x+2,j-1))$ and so $c^{\prime\prime}((x+1,j))\neq c_2$.

\begin{figure}[htb]
\begin{center}
\includegraphics[scale=1]{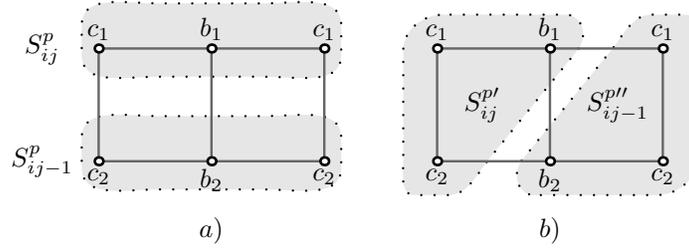}
\caption{a) A part of $G^p$ with depicted non-rainbow sets $S^p_{ij}$ and $S^p_{ij-1}$, $b_2 \neq c_1$, $b_1 \neq c_2$. b) Repartition of $S^p_{ij} \cup S^p_{ij-1}$ into two rainbow sets $S^{p\prime}_{ij}$ and $S^{p\prime\prime}_{ij-1}$.}
\label{two_dimensional_three-colouringClaim}
\end{center}
\end{figure}

Above described rules imply that either $S^p_{ij}$ is rainbow or $S^p_{ij}\cup S^p_{ij-1}$ can be divided into two 3-element rainbow sets in $(G-H,c^{\prime\prime})$: $S^{p\prime}_{ij}\cup S^{p\prime\prime}_{ij-1}$ (cf. Fig.  \ref{two_dimensional_three-colouringClaim}). We use this property to show that there is a rainbow special 3-partition of $(G-H,c^{\prime\prime})$. We divide $V(G-H)$ in the following way:
\begin{itemize}
\item We divide the vertices of each component step by step.
\item In each component $G^p$, we start with the last set, with respect to $\prec^p$, and go down due to this ordering.
\item  If $S^p_{ij}$ is rainbow then it forms a set of the rainbow special 3-partition of $(G-H,c^{\prime\prime})$. Otherwise, we partite $S^p_{ij}\cup S^p_{ij-1}$ into  two $3$-element rainbow sets $S^{p\prime}_{ij}\cup S^{p\prime\prime}_{ij-1}$ (cf. Fig. \ref{two_dimensional_three-colouringClaim}). We modify $\prec ^p$ by removing  sets that are already included in the rainbow $3$-partition.
\end{itemize}

Recall that  the sets $S^p_{i1}$ for $i\in [0,q-1]$ (sets of the first row) are rainbow, so the above partition results in a rainbow special 3-partition of $(G-H,c^{\prime\prime})$. Thus together with the rainbow  3-partition of $(H,c^{\prime})$ we obtain the rainbow  3-partition of $(G,c^{\prime}\cup c^{\prime\prime})$. Hence for every  $3$-uniform list assignment $L$ there is a proper $L$-colouring $c$ such that $(G,c)$ has a rainbow  3-partition and next, by Lemma \ref{lem:rainbow-colourable},  $G$ is equitably  $3$-choosable.
\end{proof}

Lemma \ref{lem:equitable-colouring-p_n_one-p_n_two} and Lemma \ref{lem:equitable-colouring-p_n_one-p_n_two-with_three} immediately imply the following result.
\begin{lemma}
\label{lem:equitable-colouring-p_n_one-p_n_two-final}
Let  $n_1,n_2,k\in \mathbb{N}$ with $k\geq 3$. If  each component of a graph $G$ is isomorphic to $P_{n_1}\square P_{n_2}$ then $G$  is equitably  $k$-choosable.
\end{lemma}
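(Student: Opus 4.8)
The plan is to dispose of the statement by a single case split on $k$, since the two preceding lemmas were tailored precisely to cover the two regimes $k=3$ and $k\ge 4$. For $k=3$, I would observe that the hypothesis of the final lemma---every component of $G$ isomorphic to one fixed grid $P_{n_1}\square P_{n_2}$---is exactly the hypothesis of Lemma \ref{lem:equitable-colouring-p_n_one-p_n_two-with_three}, so that lemma gives equitable $3$-choosability immediately, with nothing further to check.

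For $k\ge 4$, the plan is to reduce to Lemma \ref{lem:equitable-colouring-p_n_one-p_n_two} by exhibiting each component as a member of the family ${\cal G}_{2}$. Recall that $(P_{n_1}\square P_{n_2},0)=P_{n_1}\square P_{n_2}$ and that ${\cal G}_{2}=\{(P_{n_1}\square P_{n_2},\ell):n_1\ge 1,\,n_2\ge 2,\,\ell\in[0,n_1-1]\}$ requires the second factor to have at least two vertices. I would therefore use the commutativity of the Cartesian product to relabel the two path factors, writing $P_{n_1}\square P_{n_2}\cong P_{a}\square P_{b}$ with $b=\max(n_1,n_2)$ and $a=\min(n_1,n_2)$. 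Whenever $\max(n_1,n_2)\ge 2$ we then have $b\ge 2$ and $a\ge 1$, so the component equals $(P_{a}\square P_{b},0)\in{\cal G}_{2}$; applying Lemma \ref{lem:equitable-colouring-p_n_one-p_n_two} to $G$ yields equitable $k$-choosability for every $k\ge 4$.

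The only situation not covered by ${\cal G}_{2}$ is the degenerate case $n_1=n_2=1$, in which each component is a single vertex and $G$ is edgeless; this is trivially equitably $k$-choosable for all $k$, so I would handle it in one sentence. (Paths, i.e.\ $\min(n_1,n_2)=1$ with $\max(n_1,n_2)\ge 2$, are already absorbed above, since $P_1\square P_b\cong P_b$ lies in ${\cal G}_{2}$; alternatively one may invoke Lemma \ref{lem:equitable-colouring-cycles}.) Combining the two regimes, $G$ is equitably $k$-choosable for every $k\ge 3$.

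I do not expect any genuine obstacle here: the lemma is essentially a bookkeeping corollary, and the substantive work was already done in Lemmas \ref{lem:equitable-colouring-p_n_one-p_n_two} and \ref{lem:equitable-colouring-p_n_one-p_n_two-with_three}. The only point demanding a moment's care---and the one I would state explicitly---is the membership condition $n_2\ge 2$ defining ${\cal G}_{2}$, which forces the short appeal to commutativity and the separate treatment of the edgeless case.
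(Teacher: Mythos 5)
Your proposal is correct and follows exactly the paper's route: the paper derives this lemma as an immediate consequence of Lemma \ref{lem:equitable-colouring-p_n_one-p_n_two} (for $k\ge 4$, via membership of each component in ${\cal G}_2$ as $(P_{n_1}\square P_{n_2},0)$) and Lemma \ref{lem:equitable-colouring-p_n_one-p_n_two-with_three} (for $k=3$). Your additional care about the labelling condition $n_2\ge 2$ and the edgeless case $n_1=n_2=1$ is a harmless refinement of the same argument, not a different approach.
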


If each component of graph $G$ is in $P_{n_1}\square P_{n_2}$ then $\Delta (G)\le 4 $. Thus, by Theorem \ref{thm:Kierstead-Kostochka}, such a graph is equitably $k$-choosable for $k\ge 5$. Hence Lemma \ref{lem:equitable-colouring-p_n_one-p_n_two-final} extends this result to $k\ge 3$.

\begin{remark}
Observe that {\rm Lemma \ref{lem:equitable-colouring-p_n_one-p_n_two-with_three}} and {\rm Lemma \ref{lem:equitable-colouring-p_n_one-p_n_two-final}} are  still true if each component of $G$ is an arbitrary  $2$-dimensional  grid {\rm (}components are not necessarily of the same sizes{\rm )}. Furthermore, the bound in {\rm Lemma \ref{lem:equitable-colouring-p_n_one-p_n_two-final}} is tight, since $P_2\square P_3$ is not  $2$-choosable.
\end{remark}


\begin{lemma}
\label{lem:equitable-colouring-p_n_one-p_n_two-p_two}
Let $n_1,n_2\in \mathbb{N}$ and $t,s \in \mathbb{N}_0$. If $G$ is a graph with $t$ components such that each one is isomorphic to $P_{n_1}\square P_{n_2}\square P_2$ and with $s$ components being isomorphic to $P_{n_1}\square P_{n_2}$ then $G$  is equitably  $4$-choosable.
\end{lemma}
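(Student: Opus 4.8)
The plan is to use the rainbow-partition reformulation from Lemma \ref{lem:rainbow-colourable}: it suffices to produce, for every $4$-uniform list assignment $L$, a proper $L$-colouring $c$ of $G$ for which $(G,c)$ admits a rainbow special $4$-partition; equitable $4$-choosability then follows. First I would note that the peeling tool of Lemma \ref{lem:vertex_partition} cannot be applied here. Indeed, a degree-$3$ corner $x_1$ of a component $P_{n_1}\square P_{n_2}\square P_2$ forces $S=\{x_1\}\cup N_G(x_1)$, and then the three remaining vertices of $S$ have $|N_G(\cdot)\setminus S|$ equal to $3,3,2$, which cannot be ordered to satisfy (\ref{ineq_one}); the same obstruction persists under partial corner removals because the $P_2$-rungs always leave an external edge. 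Hence a direct construction in the spirit of Lemma \ref{lem:equitable-colouring-p_n_one-p_n_two-with_three} is needed, and I would treat the two component types simultaneously so that the rounding is controlled globally.

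For the construction I would view each $3$-dimensional component through its $P_2$-rungs, grouped into $1\times 2\times 2$ blocks $B_{i,m}=\{(i,2m-1,k),(i,2m,k):k\in[2]\}$, each of which is a $4$-cycle; consecutive blocks in the $i$-direction form a $2\times2\times2$ cube sharing four edges, while in the $m$-direction they share two edges. I would set aside a boundary subgraph $H$ consisting of the $P_{n_1}\square P_{n_2}$ components together with, from each $3$-dimensional component having $n_2$ odd, the last rung-ladder $\{(i,n_2,k):i\in[n_1],k\in[2]\}\cong P_{n_1}\square P_2$. Then every remaining part splits into full $4$-blocks $B_{i,m}$, and $H$ is a disjoint union of $2$-dimensional grids. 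By Lemma \ref{lem:equitable-colouring-p_n_one-p_n_two-final} (and the remark following it) $H$ is equitably $4$-choosable, so by Lemma \ref{lem:rainbow-colourable} I may colour $H$ with a rainbow special $4$-partition, thereby absorbing all of the non-divisibility of $G$ into $H$.

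Next I would colour the interior $4$-blocks of each $3$-dimensional component in a linear order, anchoring the slab adjacent to $H$ (or the first slab, when $n_2$ is even) to be rainbow, and always colouring a block so that it is rainbow whenever the colours already fixed on its predecessors permit. The heart of the argument, and the step I expect to be the main obstacle, is the fallback: when the current $4$-cycle admits no proper rainbow colouring, its repeated colour must sit on an opposite pair, and I would repartition the block together with its already-coloured predecessor into two new $4$-element rainbow sets, exactly as the $3\times 2$ repartition in Lemma \ref{lem:equitable-colouring-p_n_one-p_n_two-with_three}. To keep this always possible I would impose avoidance rules --- the doubled colour of the current block is kept off its predecessor, and if the predecessor is itself non-rainbow its doubled colour is kept off the current block --- so that the eight vertices of the merged region carry no colour more than twice and therefore split into two rainbow $4$-sets. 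Compared with the $3$-colour, $2$-dimensional case this step has more colour patterns to verify (a properly coloured $C_4$ may repeat one opposite pair or both) and the merged region is more highly connected, so checking that the rules can always be maintained is the crux.

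Finally I would assemble the global rainbow special $4$-partition of $(G,c)$ from the special partition of $H$ together with all interior $4$-blocks as modified by the repartitions; since every interior block has size exactly $4$ and all of the rounding lives in $H$, this is indeed a special partition. By Lemma \ref{lem:rainbow-colourable} it follows that $G$ is equitably $4$-choosable.
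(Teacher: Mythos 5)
Your outline coincides, step for step, with the paper's own proof: the reduction via Lemma \ref{lem:rainbow-colourable}; a boundary subgraph $H$ made of the $2$-dimensional components plus one slab of each $3$-dimensional component when the relevant side length is odd, handled by Lemma \ref{lem:equitable-colouring-p_n_one-p_n_two-final} together with its accompanying remark; a tiling of the remainder by $4$-cycles of the form $P_2\square P_2$ processed in a linear order with rainbow anchor blocks (the paper pairs consecutive layers in the $n_1$-direction rather than the $n_2$-direction, which is immaterial); the same fallback of merging a non-rainbow block with the already-coloured adjacent block and splitting the union into two rainbow $4$-sets; and the same final assembly. Your side remark about Lemma \ref{lem:vertex_partition} is also consistent with the paper, which observes after the proof that its $4$-partition does not meet the hypotheses of that lemma.

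However, there is a genuine gap: the step you yourself call ``the crux'' --- that the avoidance rules can always be maintained --- is precisely the mathematical content of the paper's proof, occupying most of its length, and you have not supplied it. Two statements require proof. First, whenever the current $4$-cycle cannot be made rainbow, it must be shown that it can be properly coloured with \emph{exactly one} repeated colour (pattern $c_1,c_2,c_1,c_3$, not $c_1,c_2,c_1,c_2$), and that the repeated colour can be chosen absent from the neighbour block below. One cannot simply ``impose'' this, because that neighbour block is already coloured and the lists of the current block may conspire against any such choice; the paper establishes it by a case analysis exploiting the forced structure of the $4$-uniform lists --- for instance, being forced to repeat $c_1$ on a vertex pins down its whole list, as in $L((x+1,j,2))=\{c_1,c_2,c_3,c^{\prime\prime}((x+1,j-1,2))\}$, and from such forced lists either an alternative rainbow colouring is extracted (a contradiction) or a different doubled colour avoiding the block below is found. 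Second, your cross-rule --- that when the block below is itself non-rainbow, its doubled colour does not occur on the current block --- likewise cannot be legislated, since the block below was coloured earlier; the paper proves (Claim \ref{claim:rainbow_sets_two}) that this holds \emph{automatically}, again via the forced lists. Until these two verifications are carried out, the assertion that each merged $8$-vertex region carries every colour at most twice, and hence splits into two rainbow $4$-sets, is unsupported, and the argument is incomplete.
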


\begin{figure}[htb]
\begin{center}
\includegraphics[scale=1]{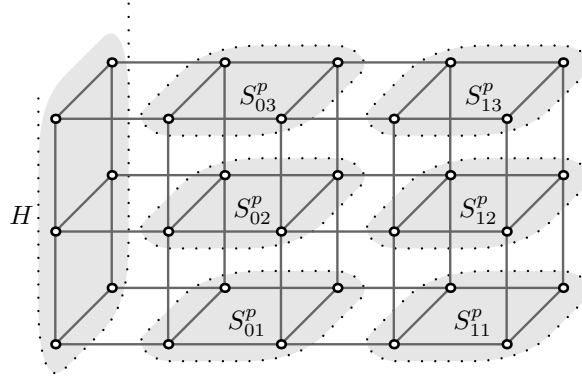}
\caption{Illustration for the proof of Lemma \ref{lem:equitable-colouring-p_n_one-p_n_two-p_two}; $G^p=P_5\square P_3\square P_2$.}
\label{three_dimensional_four-colouring}
\end{center}
\end{figure}

\begin{proof}
If  $n_1=1$ or $n_2=1$ then the proof follows from  Lemma \ref{lem:equitable-colouring-p_n_one-p_n_two-final}. Thus, without loss of generality, we may assume that $n_1, n_2\ge 2$. Let $G^p=P_{n_1}\square P_{n_2}\square P_2,\;F^u=P_{n_1}\square P_{n_2}$ for $p\in[t], u\in[s]$ be components of $G$ and 

\noindent $V(G^p)=\{(i,j,\ell)^p:i\in[n_1],j\in[n_2],\ell\in[2]\}$, 

\noindent $V(F^u)=\{(i,j)^u:i\in[n_1],j\in[n_2]\}$. 

\noindent Let $n_1=2q+r$ where $r\in [0,1]$.
Let $L$ be a $4$-uniform  list assignment for a graph $G$. We show that there is a proper $L$-colouring $c$  such that $(G,c)$ has a rainbow 4-partition. If $r=1$ then  let $H$ be a subgraph induced in $G$ by the set $\{(1,j,\ell)^p:j\in[n_2],p\in[t],\ell\in[2]\}\cup\{(i,j)^u:i\in[n_1],j\in[n_2],u\in[s]\}$. If $r=0$ then  let $H$ be a subgraph induced in $G$ by the set $\{(i,j)^u:i\in[n_1],j\in[n_2],u\in[s]\}$. By Lemma \ref{lem:equitable-colouring-p_n_one-p_n_two-final} there is an equitable proper $L$-colouring $c^{\prime}$ of $H$, and so by Lemma \ref{lem:rainbow-colourable} there is a rainbow 4-partition of $(H,c^{\prime})$. Now   we start with colouring vertices of $G-H$ (vertices of $G$, if $r=0$ and $G$ has no component isomorphic to $P_{n_1}\square P_{n_2}$). 

We divide the set of uncoloured vertices of each component  into 4-element subsets. 

\noindent $S^p_{ij}=\{(2i+1+r,j,1)^p,(2i+1+r,j,2)^p,(2i+2+r,j,1)^p,(2i+2+r,j,2)^p\}$, 
where $i \in [0,q-1], j\in[n_2],p\in[t]$ (cf. Fig~\ref{three_dimensional_four-colouring}).
In each component we define a linear ordering $\prec ^p$ on the family of these sets in the following way: $S^p_{ij}\prec S^p_{rs}$ if ($j<s$) or ($j=s$ and $i<r$). According to this ordering we properly colour vertices of each set with the following rules. 
\begin{itemize}
\item If it is only possible, we colour vertices in $S^p_{ij}$ in such a way that the vertices from this set get different colours. 
\item If we cannot colour vertices in $S^p_{ij}$ in such a way that $S_{ij}^p$ is rainbow then we colour vertices in this set in such a way that two vertices have the same colour, let us say colour $c$, other vertices are coloured differently and  there is no vertex coloured with $c$ in $S^p_{ij-1}$.
\end{itemize}

We show that there exists a proper $L$-colouring of $G-H$ such that these rules are maintained. It is easy to see that we can colour vertices in sets $\{S^p_{i1}:i\in[0,q-1]\}$ such that these sets are rainbow.
Suppose that we are at the step when we  colour vertices in $S^p_{ij}$, $j\geq 2$, so vertices of every set that precedes $S^p_{ij}$  are coloured, the vertices in $S^p_{ij}$ are uncoloured. Let $c^{\prime\prime}$ be a proper  $L$-colouring of the coloured part of $G-H$ constructed up to now.  To simplify the notation let $S^p_{ij}=\{(x,j,1),(x,j,2),(x+1,j,1),(x+1,j,2)\}$. Thus each  vertex in $\{(x,j,1),(x,j,2)\}$ has at most two coloured neighbours that are not in $S^p_{ij}$ and each vertex in $\{(x+1,j,1),(x+1,j,2)\}$ has one coloured neighbour that is not in $S^p_{ij}$. Suppose that we cannot colour vertices in $S^p_{ij}$ such that $S^p_{ij}$ is rainbow. Since every vertex has four colours on its list, we can always colour three vertices in $S^p_{ij}$  with different colours, only the last  vertex being coloured in $S^p_{ij}$ obtains the colour just used  on  $S^p_{ij}$. Let $c^{\prime\prime}((x,j,1))=c_1,c^{\prime\prime}((x,j,2))=c_2,c^{\prime\prime}((x+1,j,1))=c_3,c^{\prime\prime}((x+1,j,2))=c_1$. If there is no vertex coloured with $c_1$ in $S^p_{ij-1}$ then we are done. Suppose that there is a vertex coloured with $c_1$ in $S^p_{ij-1}$. Since we are forced to use the colour $c_1$ on $(x+1,j,2)$, we necessarily have $L((x+1,j,2))=\{c_1,c_2,c_3,c^{\prime\prime}(x+1,j-1,2)\}$. If in $L((x+1,j,1))$ there is a colour $b$ such that  $b\notin\{c_1,c_2,c_3,c^{\prime\prime}((x+1,j-1,1))$ then we can colour $(x+1,j,1)$ with $b$ and next we colour $(x+1,j,2)$ with $c_3$, to obtain a rainbow set  $S^p_{ij}$, a contradiction. Thus $L((x+1,j,1))=\{c_1,c_2,c_3,c^{\prime\prime}(x+1,j-1,1)\}$. Since each vertex has  four different colours on the list, we have $c_1\neq c^{\prime\prime}(x+1,j-1,2)$ and $c_1\neq c^{\prime\prime}(x+1,j-1,1)$. Furthermore, $(x,j-1,1)$ has a neighbour coloured with $c_1$, thus $c^{\prime\prime}((x,j-1,1))\neq c_1$. However,  by our assumption in $S^p_{ij-1}$ there is a vertex coloured with $c_1$, so $c^{\prime\prime}((x,j-1,2))=c_1$. Observe that also $c_2\neq c^{\prime\prime}((x+1,j-1,2))$ and $c_2\neq c^{\prime\prime}((x+1,j-1,1))$. Thus if $c_2\neq c^{\prime\prime}((x,j-1,1))$ then we can colour $(x+1,j,1)$ with $c_2$ and $(x+1,j,2)$ with $c_3$ to obtain desired colouring. Assume that $c_2= c^{\prime\prime}((x,j-1,1))$. Observe that there is no vertex coloured with $c_3$ in $S^p_{ij-1}$. If $c_3\in L((x,j,1))$ then we colour $(x,j,1)$ with $c_3$ and next $(x+1,j,1)$ with $c_2$ to obtain a desired colouring. 
Otherwise, $(x,j,1)$ has a colour $b$ different from $c_1,c_2,c_3$ and $c^{\prime\prime}(x-1,j,1)$ on its list. If we colour $(x,j,1)$ with $b$, the $S^p_{ij}$ is rainbow, a contradiction.

\begin{claim}
\label{claim:rainbow_sets_two}
If the set $S^p_{ij}$ is not rainbow and $S^p_{ij-1}$ is not rainbow, i.e., in $S^p_{ij-1}$ there are two vertices coloured with $b_1$, then in $S^p_{ij}$ there is no vertex coloured with $b_1$.
\end{claim}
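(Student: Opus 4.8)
My plan is to argue by contradiction, using that each of $S^p_{ij}$ and $S^p_{ij-1}$ induces a $4$-cycle (a copy of $P_2\square P_2$) and that the two blocks are linked by the perfect matching of $j$-edges pairing $(a,j,\ell)$ with $(a,j-1,\ell)$. I first record the shape of a non-rainbow proper colouring of such a $C_4$: the repeated colour must occupy a pair of \emph{antipodal} vertices, and the other two vertices receive two further, mutually distinct colours. Thus $b_1$ sits on one antipodal pair of $S^p_{ij-1}$, and the repeated colour $c$ of $S^p_{ij}$ sits on one antipodal pair of $S^p_{ij}$. By the second colouring rule used in the construction, $c$ does not appear in $S^p_{ij-1}$, so $c\neq b_1$; consequently, were $b_1$ to occur in $S^p_{ij}$ at all, it would have to be the colour of one of the two singleton-coloured vertices of $S^p_{ij}$.

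Assume for contradiction that $b_1$ does occur in $S^p_{ij}$. Each antipodal pair of $S^p_{ij}$ contains exactly one vertex of the ``far'' column (first coordinate $x+1$); let $f$ be the far-column vertex carrying $c$, let $f'$ be its matched vertex in $S^p_{ij-1}$, and observe that the two within-block neighbours of $f$ are precisely the two singletons, one of which is coloured $b_1$. The point I would exploit is that $f$ has very few already-coloured neighbours: when $S^p_{ij}$ is processed, the only coloured neighbours of $f$ are its two within-block neighbours and $f'$, since the remaining two grid-neighbours of $f$ lie in blocks that come later in $\prec^p$ and are still uncoloured. Hence, were $L(f)$ to contain a colour $w$ outside the four values $c$, the two singleton colours, and $c^{\prime\prime}(f')$, then reassigning $f:=w$ would keep the colouring proper and make $S^p_{ij}$ rainbow, contradicting that the construction (which colours a block rainbow whenever possible) left $S^p_{ij}$ non-rainbow. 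Since $c$ and the two singleton colours are three distinct colours and $|L(f)|=4$, this forces $c^{\prime\prime}(f')$ to be the missing fourth list colour, hence distinct from all three and in particular $c^{\prime\prime}(f')\neq b_1$.

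It then remains to close the contradiction. The matched vertices of $f$ and of its antipodal $c$-partner form one antipodal pair of $S^p_{ij-1}$, while the matched vertices of the two singletons form the other antipodal pair. The previous step shows $f'$ is not coloured $b_1$, so the $b_1$-pair of $S^p_{ij-1}$ is not the pair containing $f'$; therefore the $b_1$-pair is exactly the pair matched to the two singletons of $S^p_{ij}$. The corresponding matching edges then force both singletons to avoid $b_1$, while the $c$-pair carries $c\neq b_1$; hence $b_1$ appears nowhere in $S^p_{ij}$, contradicting our assumption and proving the claim. I expect the recolouring step to be the crux: the whole argument hinges on singling out the far-column vertex $f$, whose list is essentially unconstrained by previously coloured vertices, so that one reassignment would have produced a rainbow block unless the colour of its matched neighbour $f'$ is pinned down away from $b_1$.
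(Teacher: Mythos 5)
Your proof is correct. Its crux --- the recolouring argument showing that, because the construction makes a block rainbow whenever it can, the list of the far-column vertex $f$ carrying the repeated colour of $S^p_{ij}$ must be exactly $\{c,s_1,s_2,c''(f')\}$, whence $c''(f')\notin\{c,s_1,s_2\}$ --- is precisely the paper's central step. But you close the argument differently. The paper applies this list-pinning to \emph{both} far-column vertices of $S^p_{ij}$ (for the singleton-coloured one, $(x+1,j,1)$, this needs a more delicate recolouring of two vertices at once: put a spare colour on $(x+1,j,1)$ and move $c_3$ onto $(x+1,j,2)$); it then concludes that neither $c''((x+1,j-1,1))$ nor $c''((x+1,j-1,2))$ appears on $S^p_{ij}$, and finishes in one line, since the $b_1$-pair of $S^p_{ij-1}$, being non-adjacent, must contain one of those two far-column vertices. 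You instead pin down only $L(f)$ and replace the second pinning by two ingredients the paper's proof never invokes: the construction's second rule (the repeated colour $c$ of $S^p_{ij}$ is absent from $S^p_{ij-1}$, giving $c\neq b_1$), and properness across the matching edges (once $c''(f')\neq b_1$ forces the $b_1$-pair of $S^p_{ij-1}$ to be the antipodal pair matched to the two singletons, those singletons cannot receive $b_1$). The trade-off: your route avoids the fussiest part of the paper's argument, the simultaneous two-vertex recolouring, at the price of leaning on the behavioural guarantee of rule two --- which is legitimate here, since the Claim is asserted for colourings obeying both rules, but it makes your proof a statement about the constructed colouring rather than about the lists alone.
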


\begin{proof}
Without loss of generality we may assume $c^{\prime\prime}((x,j,1))=c_1,c^{\prime\prime}((x,j,2))=c_2,c^{\prime\prime}((x+1,j,1))=c_3,c^{\prime\prime}((x+1,j,2))=c_1$. 
Similarly as above we  observe that $L((x+1,j,1))=\{c_1,c_2,c_3,c^{\prime\prime}((x+1,j-1,1))\}$ and $L((x+1,j,2))=\{c_1,c_2,c_3,c^{\prime\prime}((x+1,j-1,2))\}$. Since the colours on lists are different,  $c^{\prime\prime}((x+1,j-1,1))\notin \{c_1,c_2,c_3\}$ and $c^{\prime\prime}(x+1,j-1,2)\notin \{c_1,c_2,c_3\}$ and hence neither $c^{\prime\prime}((x+1,j-1,1))$ nor  $c^{\prime\prime}((x+1,j-1,2))$ is used on $S^p_{ij}$. The argument that $c^{\prime\prime}((x,j-1,1))\neq c^{\prime\prime}((x,j-1,2))$ completes the proof.
\end{proof}

Previous arguments imply that either $S^p_{ij}$ is rainbow or $S^p_{ij}\cup S^p_{ij-1}$ can be divided into two 4-elements rainbow sets in $(G-H,c^{\prime\prime})$, as it has been shown that each colour is used in $S^p_{ij}\cup S^p_{ij-1}$ at most twice.

We use the similar method as in the proof of Lemma \ref{lem:equitable-colouring-p_n_one-p_n_two-with_three} to show that there is a rainbow 4-partition of $(G-H,c^{\prime\prime})$. We divide $V(G-H)$ in the following way (cf. Fig. \ref{three_dimensional_four-colouring}):

\begin{itemize}
\item We divide the set of vertices of each component step by step.
\item In each component $G^p$, we start with the last set due to $\prec ^p$ and go down according this ordering.
\item  If $S^p_{ij}$ is rainbow then it forms a set of the rainbow special 4-partition of $(G-H,c^{\prime\prime})$. Otherwise, we partite $S^p_{ij}\cup S^p_{ij-1}$ into two rainbow 4-element sets  that form two sets of the rainbow 4-partition of $(G-H,c^{\prime\prime})$. We modify $\prec ^p$ by removing  sets that have been already included into the rainbow 4-partition.
\end{itemize}

Recall that for $i\in[0,q-1]$ the sets $S^p_{i1}$  are rainbow, so the above partition results in a rainbow special 4-partition of $(G-H,c^{\prime\prime})$. Thus, together with the rainbow 4-partition of $(H,c^{\prime})$, we obtain the rainbow 4-partition of $(G,c^{\prime}\cup c^{\prime\prime})$. Hence for every  $4$-uniform list assignment $L$ there is a proper $L$-colouring $c$ such that $(G,c)$ has a rainbow 4-partition, and so   $G$ is equitably  4-choosable, by Lemma  \ref{lem:rainbow-colourable}.

\end{proof}

\begin{remark}
Lemma \emph{\ref{lem:equitable-colouring-p_n_one-p_n_two-p_two}} is still true when components of $G$ are of different size.
\end{remark}
Observe that the $4$-partition given in the proof of Lemma \ref{lem:equitable-colouring-p_n_one-p_n_two-p_two} does not meet the assumptions of Lemma \ref{lem:vertex_partition}, thus from that proof we cannot conclude that such a graph is equitably  $k$-choosable for $k>4$. However, if each component of $G$ is isomorphic to $P_{n_1}\square P_{n_2}\square P_2$ or $P_{n_1}\square P_{n_2}$ then $\Delta(G) \leq 5$ and by Theorem \ref{thm:Kierstead-Kostochka} we have that $G$ is equitably  $k$-choosable for $k\ge 6$.


\section{Equitable list vertex arboricity of grids}
\label{sec:arboricity_grids}

In this section we apply tools described in the previous sections what causes in giving new results concerning equitable list arboricity of $d$-dimensional grids $P_{n_1} \square \cdots \square P_{n_d}$.

First, observe that every $2$-dimensional grid  has a spanning linear forest, i.e. a union of disjoint paths), that covers all cycles. Since every linear forest is equitably  $k$-choosable for any $k\ge 2$ (cf. Lemma \ref{lem:equitable-colouring-cycles}) then, using Lemma \ref{lem:covering}, we have the following

\begin{theorem}
\label{thm:grids_two}
Let $k\in \mathbb{N}$. If $k\geq 2$ then every $2$-dimensional grid is equitably $k$-list arborable. 
\end{theorem}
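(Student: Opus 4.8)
The statement to prove is Theorem~\ref{thm:grids_two}: every 2-dimensional grid $P_{n_1}\square P_{n_2}$ is equitably $k$-list arborable for all $k\ge 2$. The plan is to invoke Lemma~\ref{lem:covering}, which reduces the arboricity question to a choosability question: it suffices to exhibit a spanning subgraph $H$ of $G=P_{n_1}\square P_{n_2}$ that covers all cycles of $G$ and is itself equitably $k$-choosable. The paragraph immediately preceding the theorem already sketches the idea, so the real content is to produce such an $H$ and verify the two required properties.

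First I would construct $H$ explicitly as a spanning linear forest of the grid, that is, a disjoint union of paths whose vertex set is all of $V(G)$. A natural choice is the ``snake'' (boustrophedon) Hamiltonian path obtained by traversing the first column top-to-bottom, crossing to the second column, traversing it bottom-to-top, and so on; alternatively one may simply take the union of the $n_2$ horizontal path-layers $\{(i,j):i\in[n_1]\}$ for each fixed $j$, which is a linear forest with $\Delta(H)\le 2$. Either choice gives $\Delta(H)\le 2$ and is bipartite, since any subgraph of the bipartite grid is bipartite.

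Next I would verify that $H$ covers all cycles of $G$. The cleanest way is to choose $H$ so that $G-E(H)$ is also a forest, equivalently so that $H$ contains at least one edge of every cycle of $G$. Using the layered linear forest above, $G-E(H)$ consists only of the vertical edges joining consecutive rows; these vertical edges alone form a forest (indeed a disjoint union of $n_1$ paths), so every cycle of $G$ must use at least one horizontal edge, i.e.\ an edge of $H$. Hence $H$ covers all cycles of $G$ in the sense of the definition given before Lemma~\ref{lem:covering}.

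Finally, equitable $k$-choosability of $H$ for every $k\ge 2$ follows directly from Lemma~\ref{lem:equitable-colouring-cycles}, since $H$ is bipartite with $\Delta(H)\le 2$. Applying Lemma~\ref{lem:covering} with this $H$ then yields that $G$ is equitably $k$-list arborable for all $k\ge 2$, completing the proof. I do not expect a serious obstacle here: the only point requiring a little care is confirming that the chosen spanning forest genuinely meets every cycle, and the layered construction makes this transparent because the complementary (vertical) edge set is acyclic. The hardest conceptual step is simply recognizing that covering all cycles plus equitable choosability of $H$ is exactly what Lemma~\ref{lem:covering} packages, but that lemma does all the work once $H$ is in hand.
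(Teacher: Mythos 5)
Your proof is correct and follows exactly the paper's route: the paper likewise takes a spanning linear forest covering all cycles (which you make explicit via the horizontal path-layers), applies Lemma~\ref{lem:equitable-colouring-cycles} to get equitable $k$-choosability of that forest for $k\ge 2$, and concludes via Lemma~\ref{lem:covering}. Your verification that the complementary vertical edges form a forest is a clean justification of the covering property that the paper leaves as an observation.
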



\subsection{3-dimensional grids}

\begin{theorem}
\label{thm:grids-three-small}
Let $k,n_2,n_3 \in \mathbb{N}$ with $n_2\geq 2$, $n_3\geq 2$. If $k \geq 2$ then $P_2 \square P_{n_2}\square P_{n_3}$ is equitably $k$-list arborable.
\end{theorem}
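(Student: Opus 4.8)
The plan is to apply Lemma~\ref{lem:covering}: it suffices to exhibit a spanning subgraph $H$ of $G=P_2\square P_{n_2}\square P_{n_3}$ that covers all cycles of $G$ and is equitably $k$-choosable for every $k\ge 2$. I write each vertex as $(a,b,c)$ with $a\in\{1,2\}$, $b\in[n_2]$, $c\in[n_3]$, and think of $G$ as two ``layers'' ($a=1$ and $a=2$), each a copy of the plane grid $P_{n_2}\square P_{n_3}$, joined by the perfect matching $A=\{(1,b,c)(2,b,c):b\in[n_2],\,c\in[n_3]\}$ of all $P_2$-direction edges. In each layer $a$ I take the ``brick-wall'' matching $M_a=\{(a,b,c)(a,b+1,c):c\in[n_3],\,b\in[n_2-1],\,b\equiv c\pmod 2\}$, and set $H=A\cup M_1\cup M_2$. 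Since $A$ is a matching and $M_1\cup M_2$ is a matching that is edge-disjoint from $A$, the graph $H$ is a union of two matchings, hence a disjoint union of paths and even cycles; in particular $H$ is bipartite with $\Delta(H)\le 2$, so by Lemma~\ref{lem:equitable-colouring-cycles} it is equitably $k$-choosable for every $k\ge 2$. Thus the whole theorem (for all $k\ge2$ at once) follows once I verify that $H$ covers all cycles of $G$.

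For the covering I would split the cycles of $G$ into two types. If a cycle $C$ uses at least one $P_2$-direction edge $(1,b,c)(2,b,c)$, then this edge lies in $A\subseteq E(H)$ and both its endpoints lie on $C$, so $C$ is covered; this disposes of every cycle meeting both layers, because the only edges joining the layers are those of $A$ (here the hypothesis that the first factor is $P_2$ is essential). The remaining cycles lie entirely in one layer, i.e.\ are cycles of $P_{n_2}\square P_{n_3}$, and for these I must show that the brick-wall matching $M_a$ supplies two adjacent vertices on $C$.

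The heart of the argument, and the step I expect to be the main obstacle, is therefore the purely two-dimensional claim that in $P_{n_2}\square P_{n_3}$ the brick-wall matching covers every cycle. I would prove it by examining the top of the cycle. Let $C$ be a cycle, drop the layer index, and let $c^\ast$ be the largest value of $c$ occurring on $C$. Consider the leftmost maximal horizontal run of $C$ in row $c^\ast$. Since no vertex of $C$ lies in row $c^\ast+1$ and none lies to the left of the run, a short degree count (each vertex of $C$ has exactly two neighbours on $C$) shows the run contains at least two vertices $(b_0,c^\ast),(b_0+1,c^\ast),\dots$ and that its extreme vertices descend, so $(b_0,c^\ast-1)$ lies on $C$, and when the run has exactly two vertices also $(b_0+1,c^\ast-1)$ lies on $C$. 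If the run has three or more vertices, two consecutive horizontal edges occur, and one of them is a domino of $M_a$, namely the one whose left column is $\equiv c^\ast\pmod 2$, covering $C$. If the run has exactly two vertices, then either $b_0\equiv c^\ast\pmod 2$, so $(b_0,c^\ast)(b_0+1,c^\ast)\in M_a$ is an edge of $C$, or $b_0\not\equiv c^\ast\pmod2$, whence $b_0\equiv c^\ast-1\pmod2$ and $(b_0,c^\ast-1)(b_0+1,c^\ast-1)\in M_a$ is a domino both of whose endpoints were just shown to lie on $C$. In every case $H$ has an edge with both endpoints on $C$, so $H$ covers all cycles of $G$, and Lemma~\ref{lem:covering} completes the proof for all $k\ge 2$.
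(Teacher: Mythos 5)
Your proposal is correct and follows essentially the same route as the paper: the same spanning subgraph $H$ (the inter-layer perfect matching together with identical brick-wall matchings $b\equiv c\pmod 2$ in the two layers), the same appeal to Lemma~\ref{lem:equitable-colouring-cycles} and Lemma~\ref{lem:covering}, and the same reduction to showing the brick-wall matching covers every cycle of $P_{n_2}\square P_{n_3}$. The only difference is presentational: the paper verifies that two-dimensional covering claim by induction on $n_3$ (looking at a vertical edge of an uncovered cycle in the last column), whereas you argue directly on the topmost row reached by the cycle, which is the same extremal idea.
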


\begin{proof}
We will prove that $P_2 \square P_{n_2}\square P_{n_3}$ contains a subgraph $H$ with maximum degree at most two that covers all cycles. Since $P_2 \square P_{n_2}\square P_{n_3}$ is bipartite then $H$ is also bipartite so, by Lemma \ref{lem:equitable-colouring-cycles}, $H$ is certainly equitably  $k$-choosable for any $k\ge 2$. Hence, by Lemma \ref{lem:covering}, the proof will follow.

\begin{figure}[htb]
\begin{center}
\includegraphics[scale=1]{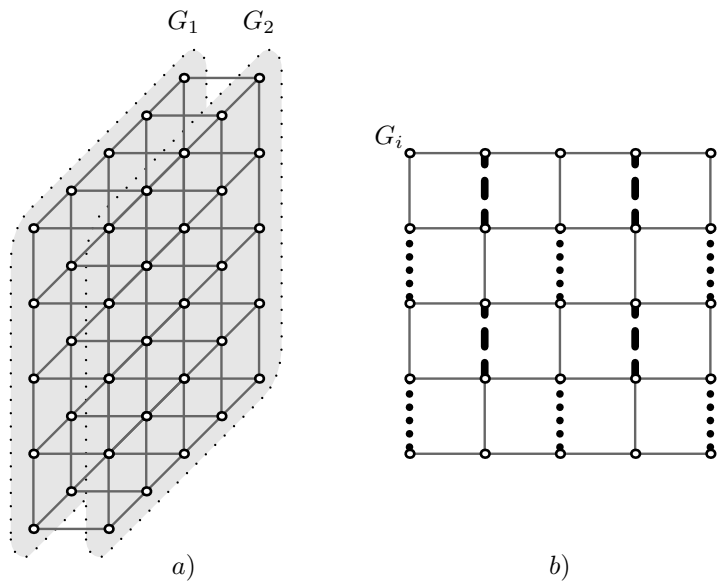}
\caption{Illustration for the proof of Theorem \ref{thm:grids-three-small}; a)$P_2 \square P_{5} \square P_{5}$ with depicted layers $G_1$ and $G_2$; b) layer $G_i$ with depicted set $M_i^\prime$ (dotted line) and set $M_i^{\prime\prime}$ (dashed line).}
\label{two_n_n}
\end{center}
\end{figure}

We can see $P_2 \square P_{n_2}\square P_{n_3}$ as two copies of $P_{n_2}\square P_{n_3}$ (we call them layers $G_1$ and $G_2$) joined by some edges. Let $V(G_1)=\{(1,y,z):y\in [n_2], z\in [n_3]\}$ be the vertex set of the  layer $G_1$ and let $V(G_2)=\{(2,y,z):y\in [n_2],z\in [n_3]\}$ be the vertex set of the  layer $G_2$ (cf. Fig. \ref{two_n_n}a)). In each  layer we choose a maximal matching in the following way. In each column we choose a maximal matching. We start with the first edge  if the column is odd and with the second edge if the column is even. More formally, for $i\in [2]$, $M_i^\prime=\{(i,2p+1,r)(i,2p+2,r):p\in[0, \left\lfloor (n_2-2)/2\right\rfloor],r\in  [n_3],r \;\mbox{is odd}\}$ and $M_i^{\prime\prime}=\{(i,2p,r)(i,2p+1,r):p\in [\left\lfloor (n_2-1)/2\right\rfloor],r\in [n_3],r\; \mbox{is even}\}$ (cf. Fig.~\ref{two_n_n}b)). Let $M_i$ be a spanning subgraph of $G_i$ such that $V(M_i)=V(G_i)$ and $E(M_i)=M_i^\prime\cup M_i^{\prime\prime}$. We show that  $M_i$ covers all cycles in $G_i$. 
Since  both $G_1,G_2$ are isomorphic to $P_{n_2}\square P_{n_3}$ we simplify notation and show that $M=M^\prime\cup M^{\prime\prime}$ covers all cycles in $P_{n_2}\square P_{n_3}$, where $M^\prime=\{(2p+1,r)(2p+2,r):p\in[0, \left\lfloor (n_2-2)/2\right\rfloor],r\in  [n_3],r \;\mbox{is odd}\}$ and $M^{\prime\prime}=\{(2p,r)(2p+1,r):p\in [\left\lfloor (n_2-1)/2\right\rfloor],r\in [n_3],r\; \mbox{is even}\}$. 
We prove it by induction on $n_3$. It is obviously true for $n_3=2$. Thus by induction hypothesis we may assume that such a spanning subgraph covers all cycles of $P_{n_2}\square P_{n_3-1}$. Suppose that $P_{n_2}\square P_{n_3}$ contains a cycle $C$ not covered by $M$. Thus $C$ contains an edge whose vertices have second coordinates $n_3$, say $(x,n_3)(x+1,n_3)$. So  $(x,n_3)(x+1,n_3)\notin M$, however by our choice of $M$ we have $(x-1,n_3)(x,n_3)\in M$ and $(x+1,n_3)(x+2,n_3)\in M$ (whenever such edges exist in $P_{n_2}\square P_{n_3})$.   Thus $C$ must contain vertices $(x,n_3-1),(x+1,n_3-1)$ but $(x,n_3-1)(x+1,n_3-1)\in M$, which contradicts that $M$ does not cover $C$. Now we construct a spanning subgraph $H$ of $P_2 \square P_{n_2}\square P_{n_3}$ in the following way.  Let us denote the set of edges in  $P_2 \square P_{n_2}\square P_{n_3}$ joining vertices between $G_1$ and $G_2$ by $E(G_1,G_2)$. We set $E(H)=M_1\cup M_2\cup E(G_1,G_2)$. Thus $H$ covers all cycles of $P_2 \square P_{n_2}\square P_{n_3}$ and $\Delta (H)=2$, and so $P_2 \square P_{n_2}\square P_{n_3}$ is equitably $k$-list arborable for every $k \geq 2$.
\end{proof}


\begin{theorem}
\label{thm:grids-p_three-p_three-p_n}
Let $n_3, k \in \mathbb{N}$. If $k\geq 2$ then
$P_3 \square P_{3}\square P_{n_3}$ is equitably $k$-list arborable.
\end{theorem}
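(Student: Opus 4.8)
The plan is to follow the strategy of Theorem~\ref{thm:grids-three-small}: I will exhibit a spanning subgraph $H$ of $G = P_3 \square P_3 \square P_{n_3}$ that covers all cycles of $G$ and satisfies $\Delta(H) \le 2$. Since $G$ is bipartite, so is $H$, and a bipartite graph of maximum degree at most $2$ (a disjoint union of paths and even cycles) is equitably $k$-choosable for every $k \ge 2$ by Lemma~\ref{lem:equitable-colouring-cycles}. Lemma~\ref{lem:covering} then yields that $G$ is equitably $k$-list arborable for every $k \ge 2$ in one stroke, which is exactly what is needed: recall that equitable colourings are not monotone in $k$, so producing a single $H$ good for all $k \ge 2$ is the clean route rather than arguing each $k$ separately.

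To build $H$ I would view $G$ as $n_3$ copies $L_1, \dots, L_{n_3}$ of the $3 \times 3$ grid $P_3 \square P_3$ stacked along the third coordinate, consecutive layers being joined by the nine vertical ($z$-direction) edges. Inside each layer I place a three-edge brick-wall matching on the horizontal ($x$-direction) edges; as in the $2$-dimensional case such a matching already covers every cycle living inside a single layer, and it uses degree at most $1$ per vertex. I would alternate the phase of this brick wall according to the parity of the layer index, so that over any two consecutive layers both $x$-edges of every $xz$-face become available; this makes the brick walls alone cover all faces perpendicular to the $y$-axis as well. The faces perpendicular to the $x$-axis (the $yz$-faces) receive no horizontal edge, so I cover them with a carefully chosen set of vertical edges, e.g.\ in each row of three columns taking the middle column at odd levels and the two outer columns at even levels. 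A short bookkeeping check shows every vertex keeps degree at most $2$ (it carries at most one brick-wall edge and at most one vertical edge), so indeed $\Delta(H)\le 2$ and $H$ is a union of paths and even cycles.

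The substantive step is to prove that this $H$ actually covers \emph{all} cycles of $G$, not merely the unit $4$-faces. A pure face-hitting count is not enough: since the feedback-edge number $|E(G)|-|V(G)|+1 = 12n_3-8$ exceeds the at most $9n_3$ edges that a subgraph of maximum degree $2$ can have, $G$ minus the edges of $H$ necessarily still contains cycles, and these must be covered through $H$-\emph{chords} rather than through their own edges. I would establish the covering property by induction on $n_3$, imitating the top-row argument of Theorem~\ref{thm:grids-three-small}: a hypothetical uncovered cycle $C$ either avoids the last layer $L_{n_3}$, in which case it lives in $P_3 \square P_3 \square P_{n_3-1}$ and is covered by the induction hypothesis, or it meets $L_{n_3}$; in the latter case I trace $C$ through the vertices of $L_{n_3}$ and use the brick-wall/vertical-edge pattern near the boundary to force an $H$-edge (typically a chord one layer down) having both endpoints on $C$, a contradiction.

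The main obstacle is precisely this last verification. Unlike the $P_2 \square P_{n_2} \square P_{n_3}$ case, where all inter-layer edges form a single perfect matching so that every cycle using a vertical edge is covered immediately, here the interior vertices have two vertical neighbours and all three dimensions are genuinely present (all factors have order at least $3$ once $n_3\ge 3$), so one cannot simply include every vertical edge. Coordinating the layer-alternating brick walls with the sparse vertical matching so that every long cycle winding through the third coordinate still meets an $H$-edge or $H$-chord, while respecting the degree-$2$ cap, is the delicate point; getting the two boundary layers and the parity interactions exactly right is where the real work lies.
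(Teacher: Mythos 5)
Your reduction is exactly the paper's: exhibit a spanning subgraph $H$ of $G=P_3\square P_3\square P_{n_3}$ with $\Delta(H)\le 2$ covering all cycles, note that $H$ is bipartite, and then Lemma~\ref{lem:equitable-colouring-cycles} plus Lemma~\ref{lem:covering} give equitable $k$-list arborability for all $k\ge 2$ at once. But everything after this reduction is missing, and that is the entire substance of the theorem. You describe $H$ only loosely (``carefully chosen'', ``e.g.''), you check at best that unit faces are hit, and you say yourself that proving the covering property for all cycles ``is where the real work lies''. In the paper this step is not a boundary detail: it requires a specific, highly asymmetric construction (the layer $x=1$ gets all three full paths in the third direction; the layer $x=2$ gets a comb formed by all $y$-edges plus every other $z$-edge along one row; the layer $x=3$ gets two full $z$-paths and alternate edges of the third; and the ladder matching $E_{2,3}$ joins layers $2$ and $3$), followed by an induction on $n_3$ with a two-case parity analysis. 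A proposal that stops before this point has not proved the theorem.

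Moreover, the $H$ you sketch is not merely unverified; in its natural readings it is false. Take the reading that passes your own checks: in layer $z$ the brick wall is $(1,1,z)(2,1,z)$, $(2,2,z)(3,2,z)$, $(1,3,z)(2,3,z)$ for odd $z$ and the mirrored triple $(2,1,z)(3,1,z)$, $(1,2,z)(2,2,z)$, $(2,3,z)(3,3,z)$ for even $z$, while the vertical edges are $(x,2,z)(x,2,z+1)$ for odd $z$ and $(x,1,z)(x,1,z+1)$, $(x,3,z)(x,3,z+1)$ for even $z$ (all $x$). This $H$ does satisfy $\Delta(H)\le 2$ and does hit every unit face of every orientation. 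Yet already for $n_3=2$ the $8$-cycle $C$ through $(2,1,1)$, $(3,1,1)$, $(3,1,2)$, $(3,2,2)$, $(3,3,2)$, $(3,3,1)$, $(2,3,1)$, $(2,2,1)$ is uncovered: none of its eight edges lies in $H$, and each $H$-edge meeting $V(C)$, namely $(1,1,1)(2,1,1)$, $(2,1,2)(3,1,2)$, $(3,2,1)(3,2,2)$, $(2,3,2)(3,3,2)$, $(1,3,1)(2,3,1)$, $(2,2,1)(3,2,1)$ and $(2,2,1)(2,2,2)$, has its other endpoint outside $C$, so no two vertices of $C$ are joined in $H$; the same cycle survives in every $n_3\ge 2$. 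Under the other reading of your vertical-edge rule (middle/outer columns indexed by the first coordinate) even some unit $yz$-faces are missed. So the ``delicate point'' you postponed is precisely where your construction breaks, and it is not fixable by boundary adjustments: a structurally different $H$, such as the paper's asymmetric comb-based one, together with its inductive covering argument, is what the proof actually consists of.
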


\begin{proof}
Similarly as in the proof of Theorem \ref{thm:grids-three-small}, we prove that $P_3 \square P_{3}\square P_{n_3}$ contains a spanning subgraph $HP_{3 \times 3\times n_3}$ with maximum degree at most two that covers all cycles. Since $P_3 \square P_{3}\square P_{n_3}$ is bipartite, $HP_{3 \times 3\times n_3}$ is also bipartite so, by Lemma \ref{lem:equitable-colouring-cycles}, $HP_{3 \times 3\times n_3}$ is equitably  $k$-choosable for any $k\ge 2$. Thus, by Lemma \ref{lem:covering}, the proof will follow.

\begin{figure}[htb]
\begin{center}
\includegraphics[scale=1]{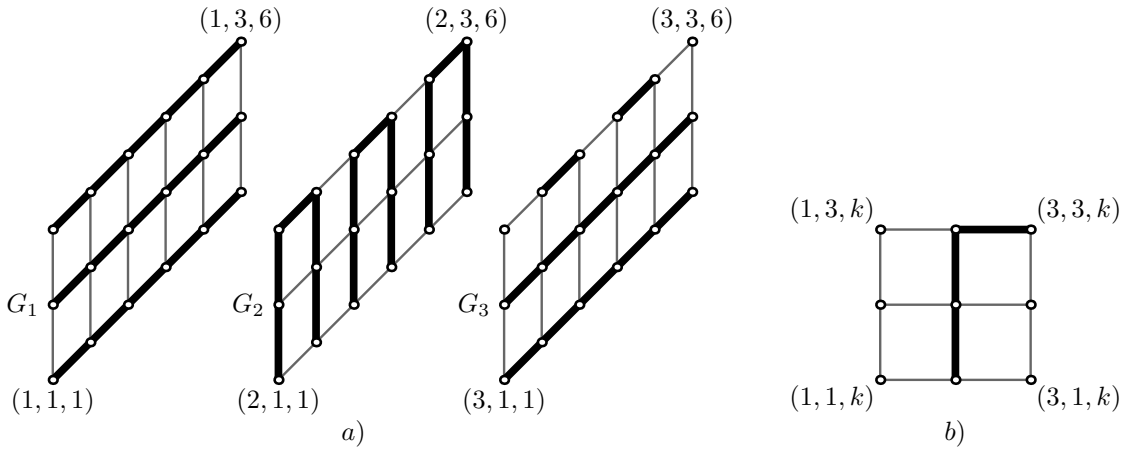}
\caption{Illustration for the proof of Theorem \ref{thm:grids-p_three-p_three-p_n}}
\label{three_three_n}
\end{center}
\end{figure}

Let $G_1,G_2$ and $G_3$ be layers of $P_3 \square P_{3}\square P_{n_3}$ such that $V(G_i)=\{(i,y,z):y\in [3], z\in [n_3]\}$ for $i\in[3]$. In each layer $G_i$ we choose the spanning subgraph $M_i$ in the following way (cf. Fig.~\ref{three_three_n}a)):

\begin{itemize}
\item $E(M_1)=\{(1,i,j)(1,i,j+1): i \in [3], j\in[n_3-1]\}$;
\item $E(M_2)=\{(2,1,i)(2,1,i+1):i\in[n_3-1]_{ODD}\}\cup \{(2,1,i)(2,2,i),(2,2,i)(2,3,i):i\in[n_3]\}$;
\item $E(M_3)=\{(3,1,i)(3,1,i+1):i\in[n_3-1]\}\cup \{(3,2,i)(3,2,i+1):i\in[n_3-1]\}\cup \{(3,3,i)(3,3,i+1):i\in[n_3-1]_{EVEN}\}$.
\end{itemize}

\noindent Moreover, 
\begin{itemize}
    \item $E_{2,3}=\{(2,3,i)(3,3,i):i\in[n_3]\}$.
\end{itemize}

\noindent The subgraph  $HP_{3 \times 3\times n_3}$ is defined in the following way: $V(HP_{3 \times 3\times n_3})=V(P_3 \square P_{3}\square P_{n_3})$ and $E(HP_{3 \times 3\times n_3})=E(M_1)\cup E(M_2)\cup E(M_3)\cup E_{2,3}$.  

We show that $HP_{3 \times 3\times n_3}$ covers all cycles of $P_3 \square P_{3}\square P_{n_3}$. Let $L_i$ for $i\in[n_3]$  be  layers that are isomorphic to $P_3\square P_3$, so $V(L_i)=\{(j,\ell,i):j\in[3],\ell\in[3]\}$. Observe that the subgraphs induced by $V(HP_{3 \times 3\times n_3}) \cap V(L_i)$ are isomorphic (cf. Fig.~\ref{three_three_n}b)). 

If a cycle in $P_{3} \square P_3 \square P_{n_3}$ contains an edge from $HP_{3 \times 3 \times n_3}$ then obviously it is covered by $HP_{3 \times 3 \times n_3}$. Thus we focus only on cycles in $P_{3} \square P_3 \square P_{n_3} - E(HP_{3 \times 3 \times n_3})$. We use the induction method to proof that every cycle in
$P_{3} \square P_3 \square P_{n_3} - E(HP_{3 \times 3 \times n_3})$ contains two vertices $u$ and $v$ such that $uv \in E(HP_{3 \times 3 \times n_3})$. 

It is easy to see that  $HP_{3 \times 3 \times 1}$ covers all cycles in $P_{3} \square P_3 \square P_{1}$.
Let $n_3\geq 2$, assume that $HP_{3 \times 3 \times (n_3-1)}$ covers all cycles in $P_{3} \square P_3 \square P_{n_3-1}$ and consider  $HP_{3 \times 3 \times n_3}$ in  $P_{3} \square P_3 \square P_{n_3}$. Thus if there is an uncovered cycle in $P_{3} \square P_3 \square P_{n_3} - E(HP_{3 \times 3 \times n_3})$ then it must  contain vertices from layer  $L_{n_3}$. First observe that the only cycle of $L_{n_3}$ that contains no edge from $HP_{3 \times 3 \times n_3}$  contains vertices $(2,1,n_3)$ and $(2,2,n_3)$. Since $(2,1,n_3)(2,2,n_3)\in E(HP_{3 \times 3 \times n_3})$, all cycles of $L_{n_3}$ are covered by $HP_{3 \times 3 \times n_3}$. Thus if there is an uncovered cycle $C$ in $P_{3} \square P_3 \square P_{n_3} - E(HP_{3 \times 3 \times n_3})$ then it must  contain vertices from layers $L_{n_3}$ and $L_{n_3-1}$.
We consider two cases.

\medskip 
\noindent {\bf Case 1.} $n_3$ is even. $C$ must go through two out of three following edges: $a=(2,3,n_3-1)(2,3,n_3)$, $b=(2,2,n_3-1)(2,2,n_3)$, $c=(3,3,n_3-1)(3,3,n_3)$.
If $C$ contains edges $a$ and $b$ (edges $a$ and $c$, resp.) then it is covered by the 
edge $(2,2,n_3)(2,3,n_3)$ ($(2,3,n_3)(3,3,n_3)$, resp.).
If $C$ goes through the edges $b$ and $c$ then it 
must contain the vertex $(3,2,n_3)$. On the other hand,  edges $(3,3,n_3-2)(3,3,n_3-1)$ and $(2,3,n_3-1)(3,3,n_3-1)$ belong to  $HP_{3 \times 3 \times n_3}$. Hence $C$ must go through $(3,2,n_3-1)(3,3,n_3-1)$. This implies that the cycle is covered by the edge $(3,2,n_3-1)(3,2,n_3)$.

\medskip
\noindent {\bf Case 2.} $n_3$ is odd.  $C$ must go through two out of three following edges: $a=(2,3,n_3-1)(2,3,n_3)$, $b=(2,2,n_3-1)(2,2,n_3)$, $c=(2,1,n_3-1)(2,1,n_3)$.
If $C$ contains edges $a$ and $b$ ($b$ and $c$, resp.) then it is covered by the
edge $(2,2,n_3)(2,3,n_3)$ ($(2,1,n_3)(2,2,n_3)$, resp.).
If the cycle contains the edges $a$ and $c$ then, to avoid vertex $(2,2,n_3)$, it consecutively goes through the edge $a$, vertices $(1,3,n_3)$, $(1,2,n_3)$, $(1,1,n_3)$, $(2,1,n_3)$ and edge $c$. Observe that $(2,3,n_3-1)$ is incident with exactly two edges $(1,3,n_3-1)(2,3,n_3-1)$ and $(2,3,n_3-2)(1,3,n_3-1)$ that are not in $E(HP_{3 \times 3 \times n_3})$. Due to '$n_3$ even' case the cycle $C$ cannot go through the second one. If it goes through the first one then $(1,3,n_3-1)\in V(C)$ and $C$ is covered by $(1,3,n_3-1)(1,3,n_3)$.

Thus $HP_{3 \times 3 \times n_3}$ covers all cycles of $P_2 \square P_{n_2}\square P_{n_3}$. $\Delta (HP_{3 \times 3 \times n_3})=2$, and so $P_3 \square P_{n_3}\square P_{n_3}$ is equitably $k$-list arborable for every $k \geq 2$.
\end{proof}

\begin{theorem}
\label{main3}
Let $n_1,n_2,n_3,k\in \mathbb{N}$. If $k\geq 3$  then $P_{n_1}\square P_{n_2}\square P_{n_3}$ is equitably $k$-list arborable. 
\end{theorem}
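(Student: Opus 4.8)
The plan is to exhibit a spanning subgraph $H$ of $G=P_{n_1}\square P_{n_2}\square P_{n_3}$ that covers all cycles of $G$ and whose every component is a $2$-dimensional grid, and then to combine Lemma~\ref{lem:equitable-colouring-p_n_one-p_n_two-final} with Lemma~\ref{lem:covering}. First I would dispose of the degenerate cases: if some $n_i=1$, then $G$ is isomorphic to a grid of dimension at most $2$, so the statement follows from Theorem~\ref{thm:grids_two} in the two-dimensional case, while a one-dimensional grid (a path) is immediate, being acyclic and, by Lemma~\ref{lem:equitable-colouring-cycles}, equitably $k$-choosable for $k\ge 2$. Hence I may assume $n_1,n_2,n_3\ge 2$.

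Next I would view $G$ as $n_1$ layers $G_1,\dots,G_{n_1}$, where $G_i$ is the subgraph induced by $\{(i,y,z):y\in[n_2],z\in[n_3]\}$, each isomorphic to $P_{n_2}\square P_{n_3}$, joined by the edges that change only the first coordinate (call these the \emph{vertical} edges). Let $H$ be the spanning subgraph of $G$ obtained by deleting all vertical edges; thus $H=G_1\cup\cdots\cup G_{n_1}$ is a disjoint union of $n_1$ copies of $P_{n_2}\square P_{n_3}$.

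The key step is to verify that $H$ covers all cycles of $G$. Since $E(H)$ consists precisely of the non-vertical edges of $G$, it suffices to show that every cycle of $G$ contains a non-vertical edge. Indeed, a cycle all of whose edges are vertical would keep the coordinates $y$ and $z$ fixed, hence would lie inside a single fibre $\{(i,y_0,z_0):i\in[n_1]\}\cong P_{n_1}$, which is a path and therefore contains no cycle, a contradiction. Consequently each cycle $C$ of $G$ has an edge $xy\in E(H)$ with $x,y\in V(C)$, so $H$ covers all cycles of $G$.

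It then remains only to note that $H$ is equitably $k$-choosable for every $k\ge 3$: each of its components is isomorphic to $P_{n_2}\square P_{n_3}$, which is exactly the hypothesis of Lemma~\ref{lem:equitable-colouring-p_n_one-p_n_two-final}. Applying Lemma~\ref{lem:covering} to this $H$ yields that $G$ is equitably $k$-list arborable for every $k\ge 3$, completing the argument. I do not expect a genuine obstacle here: essentially all of the difficulty has been absorbed into Lemma~\ref{lem:equitable-colouring-p_n_one-p_n_two-final}, and the only thing to check directly is the (elementary) cycle-covering property of the layered subgraph $H$, which hinges on the observation that no cycle can be confined to a single $P_{n_1}$-fibre.
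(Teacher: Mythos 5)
Your proposal is correct and takes essentially the same route as the paper's own proof: the paper likewise deletes all edges running in the first coordinate direction (observing that these edges form a linear forest, so no cycle of $G$ can avoid the remaining spanning subgraph, whose components are copies of $P_{n_2}\square P_{n_3}$) and then combines Lemma~\ref{lem:equitable-colouring-p_n_one-p_n_two-final} with Lemma~\ref{lem:covering}. The only cosmetic difference is that the paper needs no separate treatment of the degenerate cases $n_i=1$, since Lemma~\ref{lem:equitable-colouring-p_n_one-p_n_two-final} holds for all $n_1,n_2\in\mathbb{N}$, path factors included.
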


\begin{proof} 
Let $G=P_{n_1}\square P_{n_2}\square P_{n_3}$ be  a $3$-dimensional grid. Let us define a set of edges $X_{ij}=\{(\ell,i,j)(\ell+1,i,j):\ell\in[n_1-1]\}$ for $i\in[n_2]$ and $j\in[n_3]$. First, observe that the graph $(V(G),X)$, where $X=\bigcup_{i\in[n_2],j\in[n_3]}X_{ij}$, is a linear forest. Thus $G-X$ covers all cycles of $G$. Furthermore, every component of $G-X$ is isomorphic to  $P_{n_2}\square P_{n_3}$. Thus, by Lemma \ref{lem:equitable-colouring-p_n_one-p_n_two-final}, $G-X$ is equitably  $k$-choosable for every $k\ge 3$. Finally, Lemma \ref{lem:covering} implies that $G$ is equitably $k$-list arborable for every $k \geq 3$.
\end{proof}


\subsection{4-dimensional grids}

\begin{theorem}
\label{four_dimensional_two}
Let $n_4, k \in \mathbb{N}$. If $k\geq 2$ then $P_{2} \square P_{2} \square P_{2} \square P_{n_4}$ is equitably $k$-list arborable. 
\end{theorem}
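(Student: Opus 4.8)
The plan is to reuse the covering strategy already employed for the other $k\ge 2$ grid results (Theorems~\ref{thm:grids-three-small} and~\ref{thm:grids-p_three-p_three-p_n}): I will exhibit a spanning subgraph $H$ of $G=P_2\square P_2\square P_2\square P_{n_4}$ with $\Delta(H)\le 2$ that covers all cycles of $G$. Since $G$ is bipartite (a Cartesian product of paths), so is $H$, whence Lemma~\ref{lem:equitable-colouring-cycles} gives that $H$ is equitably $k$-choosable for every $k\ge 2$, and Lemma~\ref{lem:covering} then yields the theorem at once for all $k\ge 2$.

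To build $H$ I would write $G=P_2\square G'$ with $G'=P_2\square P_2\square P_{n_4}$, viewing $G$ as two copies $G_1,G_2$ of $G'$ joined by the perfect matching $E_{12}$ of the outer $P_2$-edges (exactly as in the proof of Theorem~\ref{thm:grids-three-small}). The key reduction is: \emph{find a matching} $M$ of $G'$ that covers all cycles of $G'$, and then set $E(H)=M_1\cup M_2\cup E_{12}$, where $M_i$ is the copy of $M$ in $G_i$. This gives $\Delta(H)\le 2$, since each vertex meets at most one edge of its $M_i$ (a matching contributes degree $\le 1$) and exactly one edge of $E_{12}$. Moreover $H$ covers all cycles of $G$: a cycle either lies inside one copy $G_i\cong G'$, and is then covered by $M_i$, or it uses an edge of $E_{12}\subseteq E(H)$ and is therefore covered by that edge. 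The only nontrivial task is thus the reduction's hypothesis, namely the existence of the matching $M$.

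For $M$, I would regard $G'$ as $n_4$ layers $L_1,\dots,L_{n_4}$, each a $4$-cycle $C_4=P_2\square P_2$, with consecutive layers joined by a perfect matching; label the four vertices of every layer $1,2,3,4$ cyclically. I take $M$ to be the \emph{alternating} perfect matching: match $\{1,2\},\{3,4\}$ inside each odd layer and $\{2,3\},\{4,1\}$ inside each even layer (the two perfect matchings of $C_4$, used according to layer parity). One checks directly that every $4$-cycle of $G'$ contains an $M$-pair: each within-layer $C_4$ contains one of its two $M$-edges, and each cross-layer quadrilateral attached to a layer-edge $ab$ contains the corresponding $M$-edge in one of its two layers (the parity of the two incident layers differs, so precisely one of them matches $a$ to $b$).

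The hard part will be proving that $M$ covers \emph{all} cycles of $G'$, not merely the quadrilaterals, since for $n_4\ge 3$ there are long cross-layer cycles whose vertex sets could a priori meet each $M$-pair at most once. I would handle this by induction on $n_4$ (base cases $n_4\in\{1,2\}$ being direct; note $P_2\square P_2\square P_2$ is the cube). Suppose a cycle $C$ meets no $M$-pair; then $C$ uses no $M$-edge and contains at most one vertex of each within-layer matched pair. If $C$ reaches the top layer $L_{n_4}$, consider a maximal run of consecutive vertices of $C$ lying in $L_{n_4}$. Because the two non-matched edges of a $C_4$ are disjoint and the matched edge incident to a run endpoint cannot be used, such a run can only be a single non-matched edge $\{(a,n_4),(a+1,n_4)\}$ (a one-vertex run is impossible, as a top-layer vertex has only one descending edge but needs cycle-degree $2$). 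Both endpoints of the run must then descend to $L_{n_4-1}$, so $(a,n_4-1)$ and $(a+1,n_4-1)$ both lie on $C$; since the parity flips between the layers, this pair is an $M$-edge of $L_{n_4-1}$, contradicting that $C$ meets no $M$-pair. Hence $C$ avoids $L_{n_4}$ and lies in $P_2\square P_2\square P_{n_4-1}$, where the induction hypothesis applies. This closes the argument and completes the plan.
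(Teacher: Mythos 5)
Your proposal is correct, and it rests on the same two pillars as the paper's proof: exhibit a spanning bipartite subgraph $H$ with $\Delta(H)\le 2$ that covers all cycles of $G$, then invoke Lemma~\ref{lem:equitable-colouring-cycles} and Lemma~\ref{lem:covering}. In fact, after relabelling coordinates your $H$ is the very subgraph the paper uses: each of your $4$-cycles (an $M$-edge in $G_1$, its copy in $G_2$, and the two $E_{12}$-edges joining them) is one of the paper's `front'/`back' or `top'/`bottom' face cycles, alternating with the parity of the $P_{n_4}$-coordinate. Where you genuinely differ is the decomposition used to verify the covering property: the paper slices $G$ along the $P_{n_4}$-axis into $n_4$ cubes and runs the induction directly on $G$ (an uncovered cycle must use a non-$H$ edge of the last cube, which forces two vertices of the previous cube joined by an $H$-edge into the cycle), whereas you split off a $P_2$-factor, reduce covering in $G$ to the claim that a \emph{matching} $M$ covers all cycles of $G'=P_2\square P_2\square P_{n_4}$ (cross cycles being hit by $E_{12}$), and run the induction one dimension lower, inside $G'$, via the run-length and parity analysis in the top layer. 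Your version is slightly more modular: it isolates a reusable statement about matchings covering all cycles of $P_2\square P_2\square P_n$, in the same spirit as the paper's own proof of Theorem~\ref{thm:grids-three-small}, while the paper's version avoids the within-layer case analysis by working with whole cubes. Both verifications are sound and both deliver the theorem for every $k\ge 2$.
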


\begin{figure}[htb]
\begin{center}
\includegraphics[scale=1]{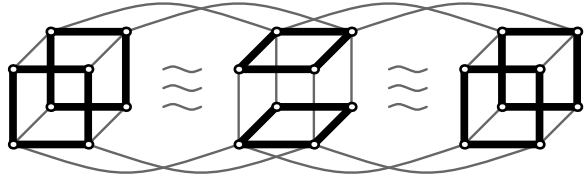}
\caption{Illustration for the proof of Theorem \ref{four_dimensional_two}}
\label{four_dimensional_two-arboricity}
\end{center}
\end{figure}

\begin{proof}
Let $G=P_{2} \square P_{2} \square P_{2} \square P_{n_4}$. We can see $G$ as $n_4$ $3$-dimensional cubes $Q^1,\ldots ,Q^{n_4}$ joined by some edges. Let $H$ be a spanning subgraph of $G$ that contains two cycles of length $4$ of each cube $Q^i$: 'front' and 'back' cycles of $Q^i$ with $i$ odd, 'top' and 'bottom' cycles of $Q^i$ with $i$ even (cf. Fig. \ref{four_dimensional_two-arboricity}). More formally, let us define a spanning subgraph $H$ of $G$   in the following way $E(H)=E_1\cup E_2$, where

\begin{itemize}
\item[$E_1=$]$\{(1,1,1,i)(1,2,1,i),(1,1,1,i)(2,1,1,i),(1,2,1,i)(2,2,1,i),(2,1,1,i)(2,2,1,i),\\
                (1,1,2,i)(1,2,2,i),(1,1,2,i)(2,1,2,i),(1,2,2,i)(2,2,2,i),(2,1,2,i)(2,2,2,i):i\in [n_4]_{ODD}\}$

\item[$E_2=$]$\{(1,1,1,j)(1,1,2,j),(1,1,1,j)(2,1,1,j),(2,1,1,j)(2,1,2,j),(1,1,2,j)(2,1,2,j),\\
       (1,2,1,j)(1,2,2,j),(1,2,1,j)(2,2,1,j),(2,2,1,j)(2,2,2,j),(1,2,2,j)(2,2,2,j):j\in [n_4]_{EVEN}\}$.
 
\end{itemize}

We prove by induction on $n_4$ that $H$ covers all cycles of $P_{2} \square P_{2} \square P_{2} \square P_{n_4}$. It is obviously true for $n_4=1$. Assume that it is true for $P_{2} \square P_{2} \square P_{2} \square P_{n_4-1}$. Without loss of generality we may assume that $n_4$ is even. Suppose that  there is a cycle $C$ in $G$ that has no two vertices adjacent by an edge in $H$. Since  there is no such a cycle in $P_{2} \square P_{2} \square P_{2} \square P_{n_4-1}$, it follows that $C$ contains an edge of the cube $Q^{n_4}$ induced by the vertices of the form $(i,j,\ell, n_4)$, $i\in [2],j\in [2],\ell\in [2]$  that is not in $H$. By symmetry we may assume that $C$ contains $(1,1,1,n_4)(1,2,1,n_4)$. Thus $C$ must also contain vertices  $(1,1,1,n_4-1)$ and $(1,2,1,n_4-1)$, however $(1,1,1,n_4-1)(1,2,1,n_4-1)\in E(H)$, a contradiction. Since $H$ is equitably  $k$-choosable for $k\ge 2$ by Lemma \ref{lem:equitable-colouring-cycles}, $G$  is equitably $k$-list arborable for $k\ge 2$ by Lemma \ref{lem:covering}.
\end{proof}

\begin{theorem}
\label{four_dimensional_three}
Let  $n_3,n_4,k \in \mathbb{N}$. If $k\geq 3$ then $P_{2} \square P_{2} \square P_{n_3} \square P_{n_4}$ is equitably $k$-list arborable. 
\end{theorem}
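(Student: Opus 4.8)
The plan is to use the covering-subgraph method of Lemma~\ref{lem:covering}, exactly in the spirit of Theorems~\ref{thm:grids-three-small}--\ref{four_dimensional_two}: I would exhibit a spanning subgraph $H$ of $G=P_2\square P_2\square P_{n_3}\square P_{n_4}$ that covers all cycles of $G$ and all of whose components are $2$-dimensional grids, so that $H$ is equitably $k$-choosable for every $k\ge 3$ by Lemma~\ref{lem:equitable-colouring-p_n_one-p_n_two-final}; Lemma~\ref{lem:covering} then yields the theorem. Write vertices as $(a,b,i,j)$ with $a,b\in[2]$, $i\in[n_3]$, $j\in[n_4]$, and for each fixed $j$ call $T_j=\{(a,b,i,j)\}\cong(P_2\square P_2)\square P_{n_3}=C_4\square P_{n_3}$ the $j$-th \emph{tube}.

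First I would construct $H$ as follows. In each tube I delete one of the four edge-directions of the $C_4=P_2\square P_2$ uniformly over all $i$, so that the $C_4$ becomes a path $P_4$ and the tube becomes a grid $P_4\square P_{n_3}$; crucially I \emph{alternate} the deleted direction with the parity of $j$, deleting two distinct (say opposite) edges of the $C_4$ for even and for odd $j$. Then I delete every edge of the $P_{n_4}$-direction, which separates the tubes from one another. The resulting $H$ is a disjoint union of $n_4$ copies of $P_4\square P_{n_3}$, so Lemma~\ref{lem:equitable-colouring-p_n_one-p_n_two-final} applies and gives equitable $k$-choosability of $H$ for all $k\ge 3$.

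Next I would verify that $H$ covers all cycles of $G$. Every cycle that uses a $P_{n_3}$-direction edge, or any cross edge other than a deleted one, is covered at once, since all such edges lie in $H$. The only remaining cycles use exclusively deleted cross edges and $P_{n_4}$-direction edges; such a cycle keeps the coordinate $i$ fixed, hence lives inside a single column $\{(a,b,i_0,j)\}\cong C_4\square P_{n_4}$. Here the alternation is decisive: the edge cut at one parity of $j$ survives (as an edge of the remaining $P_4$) at the opposite parity, so a column cycle running between two tubes of a given parity necessarily contains two vertices joined by a surviving edge in an intermediate tube of the other parity. Establishing this covering claim cleanly for all column cycles is the heart of the argument and the main obstacle; I would prove it by induction on $n_4$, precisely the parity/alternation bookkeeping used for the cubes in Theorem~\ref{four_dimensional_two}.

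What makes this the crux is the $P_2\square P_2$ factor. Its $C_4$ faces (one for each $(i,j)$) cannot be covered by splitting $G$ into the four natural two-dimensional columns $P_{n_3}\square P_{n_4}$, because covering such a $C_4$ forces a cross edge, which in turn fuses two columns; and, unlike $P_2\square P_2\square P_2\square P_{n_4}$, no covering subgraph of maximum degree $2$ exists, so one cannot reach $k\ge 2$ and must pass to grid components of degree up to $4$. Breaking each $C_4$ into a $P_4$ and alternating the cut is exactly the device that simultaneously yields clean $P_4\square P_{n_3}$ components and keeps every cycle covered.
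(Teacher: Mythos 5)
Your proposal is correct and uses the paper's overall method --- a parity-alternating spanning subgraph that covers all cycles, fed into Lemma~\ref{lem:covering} --- but with a different decomposition and a different choosability lemma. The paper deletes from each layer $P_2\square P_2\square P_{n_3}$ the \emph{whole} matching of $C_4$-edges in one $P_2$-direction (alternating the direction with the parity of the layer index), so every layer splits into two copies of $P_2\square P_{n_3}$, whose equitable $k$-choosability for $k\ge 3$ comes from the lighter Lemma~\ref{lem:equitable-colouring-p_n_one-p_two} (components in ${\cal G}_1$). You delete only \emph{one} edge per $C_4$ (opposite edges for opposite parities of $j$), so each tube survives as a single $P_4\square P_{n_3}$, and you must invoke the heavier Lemma~\ref{lem:equitable-colouring-p_n_one-p_n_two-final}, which rests on the harder $3$-choosability argument of Lemma~\ref{lem:equitable-colouring-p_n_one-p_n_two-with_three}. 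Your covering claim does close, and in fact needs no induction: every deleted edge preserves both the $P_{n_3}$-coordinate $i$ and the first $P_2$-coordinate $a$, so an uncovered cycle would lie in a ladder $P_2\square P_{n_4}$ (rails being the $P_{n_4}$-edges, rungs being the deleted cross edges, which occur only at one parity of $j$); any cycle in a ladder consists of exactly two rungs plus the rail segments between them, and since those two rungs sit at same-parity indices $j_1<j_2$, the cycle contains both endpoints of the surviving rung at any intermediate index of the other parity --- a contradiction. So both routes work and give the same conclusion: the paper's cut buys simpler components (maximum degree $3$, handled by the ${\cal G}_1$ lemma) at the price of deleting more edges, while yours keeps $H$ larger and connected per tube at the price of the stronger choosability input. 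One incidental error: your closing assertion that no covering subgraph of maximum degree $2$ exists for these grids is unjustified, and it is false for $n_3=2$, where Theorem~\ref{four_dimensional_two} exhibits exactly such a subgraph for $P_2\square P_2\square P_2\square P_{n_4}$; but that remark is pure motivation and does not affect the validity of your proof.
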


\begin{figure}[htb]
\begin{center}
\includegraphics[scale=1]{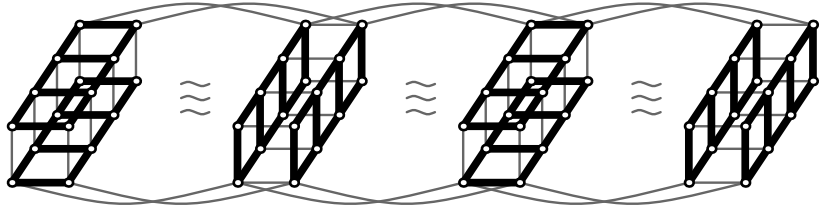}
\caption{Illustration for the proof of Theorem \ref{four_dimensional_three}}
\label{four_dimensional_three-arboricity}
\end{center}
\end{figure}

\begin{proof}
Let $G=P_{2} \square P_{2} \square P_{n_3} \square P_{n_4}$. We show that there is a spanning subgraph $H$ of $G$ that covers all cycles of $G$ such that each component of $H$ is isomorphic to $P_2\square P_{n_3}$. Since  $H$ is equitably  $k$-choosable for $k\ge 3$, by Lemma \ref{lem:equitable-colouring-p_n_one-p_two}, we apply Lemma \ref{lem:covering} to show that $G$ is equitably $k$-list arborable for every $k \geq 3$. We can cf. $G$ as $n_4$  layers $G_1,\ldots,G_{n_4}$, each of which is isomorphic to a $3$-dimensional grid  $P_{2} \square P_{2} \square P_{n_3}$, joined by some edges. To obtain $H$ from every grid $G_i$ we take two disjoint $P_{2} \square P_{n_3}$, if $i$ is odd we take 'top' and 'bottom' $P_{2} \square P_{n_3}$, if $i$ is even we take 'left' and 'right' $P_{2} \square P_{n_3}$ (cf. Fig.  \ref{four_dimensional_three-arboricity}).  Let $H=\bigcup _{i\in [n_4]_{ODD}}(H_{1i}\cup H_{2i})\cup \bigcup_{j\in [n_4]_{EVEN}}(H'_{1j}\cup H'_{2j})$ be a spanning subgraph of $G$, where

\begin{itemize}
\item $H_{1i}=G[\{(1,1,p,i),(2,1,p,i):p\in [n_3]]$ ('bottom'), 
\item $H_{2i}=G[\{(1,2,p,i),(2,2,p,i):p\in n_3]]$ ('top'),
\item $H'_{1j}=G[\{(1,1,p,j),(1,2,p,j):p\in [n_3]]$ ('left'), 
\item $H'_{2j}=G[\{(2,1,p,j),(2,2,p,j):p\in [n_3]]$ ('right').
\end{itemize}

We prove by induction on $n_4$ that $H$ covers all cycles of $G$. It is easy to see that if $n_4=1$, the subgraph $H$ covers all cycles of $G$.  Now, suppose that $H$ covers all cycles of   $P_{2} \square P_{2} \square P_{n_3} \square P_{n_4-1}$. Without  loss of generality we may assume that $n_4$ is odd. If $G$ contains a cycle $C$ not covered by $H$ then there is  an edge in $C$ whose end vertices have the last coordinate $n_4$ and that are not in $H$. Let $(1,1,p,n_4)(1,2,p,n_4)$ be such an edge. Since all edges adjacent to the edge $(1,1,p,n_4)(1,2,p,n_4)$ except $(1,1,p,n_4)(1,1,p,n_4-1)$ and $(1,2,p,n_4)(1,2,p,n_4-1)$ are in $H$ then the vertices $(1,1,p,n_4-1)$ and $(1,2,p,n_4-1)$ must be in $C$.  However, $(1,1,p,n_4-1)(1,2,p,n_4-1)\in E(H)$, which contradicts the assumption that $H$ does not cover $C$. Thus, by Lemma \ref{lem:equitable-colouring-p_n_one-p_two} and  Lemma \ref{lem:covering}, the theorem holds.
\end{proof}

\begin{theorem}
\label{four_dimensional_four}
Every  $4$-dimensional grid  is equitably $4$-list  arborable. 
\end{theorem}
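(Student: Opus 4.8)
The plan is to reduce the $4$-dimensional grid to Lemma~\ref{lem:equitable-colouring-p_n_one-p_n_two-p_two} through the covering tool of Lemma~\ref{lem:covering}, in the same spirit as the proof of Theorem~\ref{main3}. Write $G=P_{n_1}\square P_{n_2}\square P_{n_3}\square P_{n_4}$ with vertex set $\{(a,b,c,d)\}$ and think of $G$ as an array of copies of the $2$-dimensional grid $P_{n_1}\square P_{n_2}$, one copy for each position $(c,d)$ of the \emph{base grid} $P_{n_3}\square P_{n_4}$, where an edge of the base grid is realised in $G$ as the perfect matching joining the two corresponding copies in the third or fourth coordinate. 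I call the edges of $G$ lying in the first two coordinates \emph{copy-internal} and the edges lying in the last two coordinates \emph{connecting}.

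First I would fix a spanning matching $B$ of the base grid $P_{n_3}\square P_{n_4}$ that covers all of its cycles; the brick-wall matching built in the proof of Theorem~\ref{thm:grids-three-small} has exactly these two properties (it is a matching and it was shown there to cover all cycles of a $2$-dimensional grid). I then define a spanning subgraph $H$ of $G$ by keeping every copy-internal edge together with, for each edge of $B$, the whole bundle of connecting edges realising it, and by discarding all remaining connecting edges. Because $B$ is a matching, the components of $H$ are of two kinds: a pair of copies joined along a $B$-edge forms $P_{n_1}\square P_{n_2}\square P_2$, while a copy sitting at a $B$-unmatched position forms $P_{n_1}\square P_{n_2}$. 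Hence Lemma~\ref{lem:equitable-colouring-p_n_one-p_n_two-p_two} applies and $H$ is equitably $4$-choosable.

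It remains to check that $H$ covers all cycles of $G$, which I expect to be the only point genuinely needing care. The key is a dichotomy: any cycle of $G$ that uses at least one copy-internal edge is automatically covered, since all copy-internal edges belong to $H$; and any cycle using no copy-internal edge keeps the pair $(a,b)$ constant, hence lies entirely in the base-layer $\{(a_0,b_0,c,d):c\in[n_3],\,d\in[n_4]\}$, which is isomorphic to $P_{n_3}\square P_{n_4}$. For such a cycle, the covering property of $B$ supplies two of its vertices joined by a $B$-edge, and the two corresponding vertices of the cycle in $G$ are then joined by a connecting edge of $H$. Thus every cycle of $G$ is covered, and applying Lemma~\ref{lem:covering} to the equitably $4$-choosable graph $H$ yields that $G$ is equitably $4$-list arborable.

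I do not anticipate obstacles beyond this cycle-covering verification. The two mild pitfalls are ensuring that discarding the non-$B$ connecting edges cannot leave an uncovered cycle (resolved by the dichotomy above, which shows that every remaining ``dangerous'' cycle is confined to a single base-layer) and confirming that the brick-wall matching of Theorem~\ref{thm:grids-three-small} is genuinely a matching, so that the components of $H$ are exactly the claimed dominoes $P_{n_1}\square P_{n_2}\square P_2$ and singletons $P_{n_1}\square P_{n_2}$ required by Lemma~\ref{lem:equitable-colouring-p_n_one-p_n_two-p_two}.
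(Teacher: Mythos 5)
Your proof is correct, and while it ends by invoking the same two key lemmas as the paper (Lemma \ref{lem:equitable-colouring-p_n_one-p_n_two-p_two} and Lemma \ref{lem:covering}), the construction of the covering subgraph $H$ is genuinely different. The paper slices $G$ along the fourth coordinate into $n_4$ copies of the $3$-dimensional grid $P_{n_1}\square P_{n_2}\square P_{n_3}$, discards \emph{all} connecting edges between consecutive copies, and instead removes from the $i$-th copy a parity-alternating matching $E_i$ of first-coordinate edges; the components of the resulting $H$ are $P_2\square P_{n_2}\square P_{n_3}$ and $P_{n_2}\square P_{n_3}$, with the $P_2$ factor arising \emph{inside} each $3$-dimensional layer, and the covering property is proved by a fresh induction on $n_4$ in which the parity alternation between consecutive layers forces any allegedly uncovered cycle into a contradiction. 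You slice instead along the last two coordinates, keeping every copy of $P_{n_1}\square P_{n_2}$ intact and selecting exactly those bundles of connecting edges that lie over a cycle-covering matching $B$ of the base grid $P_{n_3}\square P_{n_4}$ (the brick-wall matching from the proof of Theorem \ref{thm:grids-three-small}, which is indeed a matching and was shown there to cover all cycles of a $2$-dimensional grid; note that $n_3,n_4\ge 2$ since $G$ is a $4$-dimensional grid, so that construction applies). Because $B$ is a matching, your components are $P_{n_1}\square P_{n_2}\square P_2$ and $P_{n_1}\square P_{n_2}$, exactly as Lemma \ref{lem:equitable-colouring-p_n_one-p_n_two-p_two} requires, and your covering verification is a clean dichotomy needing no new induction: a cycle using a copy-internal edge is covered outright, and a cycle using none is confined to a single base-layer isomorphic to $P_{n_3}\square P_{n_4}$, where the already-established covering property of $B$ finishes the argument. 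What the paper's route buys is self-containedness within this theorem; what yours buys is modularity, since it recycles an existing covering result rather than proving a new one, and it scales directly: the same scheme (copies of $P_{n_1}\square\cdots\square P_{n_{d-2}}$ over a two-dimensional base, bundles selected by a cycle-covering matching) would produce the family ${\cal H}(n_1,\ldots,n_{d-2})$ of Lemma \ref{lem:arboricity_general_bound} for arbitrary $d\ge 3$ in one stroke, whereas the paper reruns its induction there.
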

\begin{proof}
Let $G=P_{n_1} \square P_{n_2} \square P_{n_3} \square P_{n_4}$. Again, we  determine a graph $H$, whose every component is isomorphic to $P_2 \square P_{n_2} \square P_{n_3}$ or $P_{n_2}\square P_{n_3}$, that covers all cycles of $G$. Next we  apply Lemmas \ref{lem:equitable-colouring-p_n_one-p_n_two-p_two} and \ref{lem:covering}, so $G$ is equitably $4$-list  arborable. 

\begin{figure}[htb]
\begin{center}
\includegraphics[scale=1]{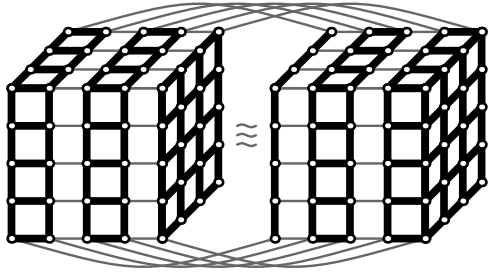}
\caption{Illustration for the proof of Theorem \ref{four_dimensional_four}}
\label{four_dimensional_four-arboricity}
\end{center}
\end{figure}

We can see $G$ as  $3$-dimensional grids  $G_i=P_{n_1} \square P_{n_2} \square P_{n_3},\;i\in[n_4]$ joined by some edges, i.e. $G_i=G[\{(r,s,t,i):r\in[n_1],s\in[n_2],t\in[n_3]\}],;i\in[n_4]$.
To obtain $H$ we take all copies of $G_i$ after removing the matching $E_i$ defined as follows (cf. Fig. \ref{four_dimensional_four-arboricity}). 

\noindent $E_i=\left\{\begin{matrix}
\{(r,s,t,i)(r+1,s,t,i):r\in [n_1-1]_{ODD},s\in[n_2],t\in[n_3]\}& \mbox{if} \;i \;\mbox{is odd,}\\
\{(r,s,t,i)(r+1,s,t,i):r\in [n_1-1]_{EVEN},s\in[n_2],t\in[n_3]\}& \mbox{if}\; i\; \mbox{is even.}\\
\end{matrix}\right.$

\noindent Now, $H=\bigcup_{i\in[n_4]}(G_i-E_i)$. We prove by induction on $n_4$ that $H$ covers all cycles of $G$. Since $E_1$ is a matching, $G_1-E_1$ obviously covers all cycles of $G_1$. Let $G'=P_{n_1} \square P_{n_2} \square P_{n_3} \square P_{n_4-1}$ and $H'=\bigcup_{i\in[n_4-1]}(G_i-E_i)$. Assume that $H'$ covers all cycles of $G'$. Without of loss generality we may assume that $n_4$ is odd. On the contrary, suppose that $G$ contains a cycle $C$ not covered by $H$. Thus $C$ contains an edge $e$ of $E_{n_4}$, say $e=(2r+1,s,t,n_4)(2r+2,s,t,n_4)$. So  vertices $(2r+1,s,t,n_4),(2r+2,s,t,n_4)$ are in $V(C)$. Since all edges of $G_{n_4}$ incident with $(2r+1,s,t,n_4)$ and $(2r+2,s,t,n_4)$, except $e$, are in $H$, we must have that $(2r+1,s,t,n_4-1)$ is a neighbour of $(2r+1,s,t,n_4)$ in $C$ and $(2r+2,s,t,n_4-1)$ is a neighbour of $(2r+2,s,t,n_4)$ in $C$. Thus $(2r+1,s,t,n_4-1),(2r+2,s,t,n_4-1)\in V(C)$, however $(2r+1,s,t,n_4-1)(2r+2,s,t,n_4-1)\in E(H)$, which contradicts that $C$ is not covered by $H$.
\end{proof}

In the proof of the next theorem we use Lemma \ref{lem:vertex_partition_arboricity}. We determine a special $5$-partition of a graph to show that the graph is equitably $k$-list  arborable for every $k \geq 5$. 

\begin{theorem}
Let  $k\in \mathbb{N}$. If $k\geq 5$ then every $4$-dimensional grid is equitably $k$-list  arborable. 
\label{main4}
\end{theorem}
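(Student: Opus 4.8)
The plan is to apply Lemma~\ref{lem:vertex_partition_arboricity} with $k=5$. Since every $4$-dimensional grid $G=P_{n_1}\square P_{n_2}\square P_{n_3}\square P_{n_4}$ has $\Delta(G)\le 8$, it suffices to produce a special $5$-partition $S_1\cup\cdots\cup S_{\eta+1}$ of $V(G)$ together with, for each block $S_j$ with $j\ge 2$, an ordering $x_1^j,\dots,x_5^j$ satisfying inequality~(\ref{ineq_two}), i.e. $|N_G(x_i^j)\cap(S_1\cup\cdots\cup S_{j-1})|\le 2i-1$. Because $\Delta(G)\le 8<9=2\cdot 5-1$, the constraint on $x_5^j$ is vacuous, and the bound $7$ on $x_4^j$ fails only for a degree-$8$ vertex with no neighbour inside its own block; the genuinely restrictive requirements are $|N_G(x_1^j)\cap(S_1\cup\cdots\cup S_{j-1})|\le 1$ and the bound $\le 3$ for $x_2^j$. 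So the whole task reduces to splitting the vertices into blocks of five in which one vertex is ``almost surrounded'' by its own block.

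I would build the partition by induction on $|V(G)|$, peeling off one admissible five-vertex block at a time (equivalently, constructing $S_{\eta+1},S_\eta,\dots$ in reverse, so that removing a block $S$ turns the condition into $|N_{G'}(x_i)\setminus S|\le 2i-1$ in the current graph $G'$). The block I would peel is a \emph{corner blob}: anchoring at a corner, take a unit square in two coordinate directions together with one further neighbour of the corner in a third direction, for instance $S=\{(1,1,1,1),(2,1,1,1),(1,2,1,1),(2,2,1,1),(1,1,2,1)\}$. A direct count shows its leaving-degrees are at most $1,3,3,4,4$ (the corner keeps all but one of its neighbours inside $S$), and sorting these against the bounds $1,3,5,7,9$ verifies (\ref{ineq_two}); since all $n_i\ge 2$ such a blob is always present, and shrinking any $n_i$ only lowers these degrees. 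Stripping the blob and recursing, with the final short block $S_1$ (of size $|S_1|\le 5$) carrying no constraint, would then yield the required partition, after which Lemma~\ref{lem:vertex_partition_arboricity} finishes the proof.

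The main obstacle is that a $4$-dimensional grid minus a corner blob is no longer a grid, so the induction cannot remain inside the class of grids. I would therefore run the induction over a larger family $\mathcal{F}$ of ``staircase regions'' --- induced subgraphs of a box $[1,n_1]\times\cdots\times[1,n_4]$ on a downward-closed vertex set --- which contains all $4$-dimensional grids and, crucially, is closed under removing a corner blob at a coordinatewise-maximal cell. The heart of the argument is then the purely combinatorial claim that every nonempty member of $\mathcal{F}$ admits \emph{some} admissible blob: besides the generic corner blob above, one must supply ad hoc blobs for thin or degenerate shapes (slabs with some $n_i=2$, regions only one or two cells thick in several directions, and leftover pieces of fewer than five cells), checking each time that the leaving-degrees respect $1,3,5,7,9$. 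Arranging $\mathcal{F}$ so that closure holds while keeping this case analysis finite is where the real work lies; everything else is bookkeeping made routine by the slack between $\Delta(G)\le 8$ and the threshold $2\cdot 5-1=9$.
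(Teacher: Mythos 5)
Your high-level frame coincides exactly with the paper's: apply Lemma~\ref{lem:vertex_partition_arboricity} with $k=5$, and build the special $5$-partition in reverse by repeatedly peeling off a five-vertex block whose leaving-degrees respect the thresholds $1,3,5,7,9$; your degree counts for the corner blob are also correct. The genuine gap is that the one claim carrying all the content --- that every region arising during the peeling admits an admissible block --- is never proved. You explicitly defer the definition of the family $\mathcal{F}$, its closure under blob removal, and the case analysis for thin and degenerate shapes as ``where the real work lies''; but that deferred claim \emph{is} the theorem, not bookkeeping. Moreover, the family you name does not have the closure property as stated: downward-closed (``staircase'') regions are not preserved by deleting your blob. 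Anchoring at a coordinatewise-maximal cell $v$ and deleting $v-e_1$ can leave the incomparable cell $v-e_1+e_2$ in place, and since $v-e_1+e_2$ dominates the deleted cell $v-e_1$, the remaining set is no longer downward-closed. Concretely, let $W$ be the downward closure of $\{v,\;v-e_1+e_2\}$ inside the grid: your blob at $v$ exists, but removing it destroys the staircase property. So the induction, as set up, does not close, and any repair must either change the family or change the blob.

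The paper resolves precisely this difficulty by making the block choice order-theoretic rather than geometric, which renders the existence claim shape-independent. It takes $x_1^j,x_2^j,x_3^j$ to be the three lexicographically first vertices of whatever vertex set remains, and $x_4^j,x_5^j$ to be surviving neighbours of $x_1^j$ when such exist (otherwise lex-first remaining vertices). Two observations then hold for an \emph{arbitrary} remaining subset of the grid, with no structural hypotheses: every lex-smaller neighbour of the lex-first vertex lies outside the set, so $x_1^j$ has at most four neighbours there, all of the form $x_1^j+e_i$; and if it has exactly four, then $x_1^j+e_4$ is the lex-second vertex, i.e.\ equals $x_2^j$ (nothing can lie lexicographically between them). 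The analogous remark for $x_2^j$ gives it at most one lex-smaller neighbour, namely $x_1^j$, and forces $x_3^j$ to be its neighbour whenever it has four forward neighbours. Altogether the leaving-degrees fall under the thresholds $1,3,5,7,9$ for every possible remaining set, so no auxiliary family, no closure lemma, and no analysis of thin shapes is needed. Replacing your geometric corner blob with this lexicographic block is the natural way to complete your argument.
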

\begin{proof}
Let $G=P_{n_1} \square P_{n_2} \square P_{n_3} \square P_{n_4}$ and $V(G)=\{(i,j,k,l): i \in [n_1], j \in [n_2], k \in [n_3], l \in [n_4]\}$,
We  determine a special $5$-partition $S_1 \cup \cdots \cup S_{\eta+1}$ of $G$, with $|V(G)|=5\eta+r$ and $r\in [5]$, that fulfills the assumptions of Lemma \ref{lem:vertex_partition_arboricity}.  So, by Lemma \ref{lem:vertex_partition_arboricity}, the theorem will follow. 
We  depict sets $S_j$ of size $5$ step by step in decreasing order, starting with determining a set $S_{\eta+1}$ and next, in the same manner,  sets $S_{\eta}, \ldots ,S_2$. The last set $S_1$ is formed by vertices in $V(G) \backslash (S_2 \cup \cdots \cup S_{\eta+1})$, so its size   is less than or equal to $5$. Since the assumtions of Lemma \ref{lem:vertex_partition_arboricity} are obviously fulfilled for each $4$-dimentional grid $G$ satisfying $|V(G)|\le 5$ we may assume that $|V(G)|\geq 6$.

Let $j\in [2,\eta+1]$. To determine a set $S_j$ consisting of elements $x_1^j, \ldots ,x_5^j$, we use the sets $S_{j+1}, \ldots ,S_{\eta+1}$ constituted in the previous steps. 
Let $G_j=G-(S_{j+1}\cup \cdots \cup S_{\eta+1})$. Thus $G_j$ is the graph induced in $G$ by the union of sets $S_1, \ldots ,S_j$, whose forms are unknown at this moment. Observe that $V(G_{j-1})$ is equal to $S_1\cup \cdots \cup S_{j-1}$. Hence $V(G_{j-1})$ is the set involved in the condition (\ref{ineq_two}) of Lemma  \ref{lem:vertex_partition_arboricity}. Precisely, this condition  can be rewritten here in the form
$$|N_{G_{j-1}}(x_i^j)|\le 2i-1.$$

To find $x_1^j, \ldots ,x_5^j$ that satisfy the condition (\ref{ineq_two}) of Lemma \ref{lem:vertex_partition_arboricity}, let us  do as follows.

Let $L_{lex}$ be the list of all vertices of $V(G_{j-1})$ ordered lexicographically. Note that if vertex $(a,b,c,d)$ is the first in the list then it has at most four neighbours in the list: $(a+1,b,c,d)$, $(a,b+1,c,d)$, $(a,b,c+1,d)$, $(a,b,c,d+1)$, moreover if it has exactly four neighbours then $(a,b,c,d+1)$ is the second in the list.

Let $x_1^j$ be the first, $x_2^j$ the second and $x_3^j$ the third vertex in the list $L_{lex}$. Remove those vertices from the list. If there is still any neighbour of $x_1^j$ in the list then let $x_4^j$ be this neighbour, otherwise let $x_4^j$ be the first element in the list. Remove $x_4^j$ from the list and similarly choose $x_5^j$. If there is any neighbour of $x_1^j$ in the list then let $x_5^j$ be this neighbour, otherwise let $x_5^j$ be the first element in the list.

We will prove that the set $S_j$, determined in the way described above, fulfill the assumption of Lemma \ref{lem:vertex_partition_arboricity}.
We know that $|N_{G_{j}}(x_1^j)|\le 4$. If $|N_{G_{j}}(x_i^j)| = 4$ then we have chosen to $S_j$ at least three of the neighbours of $x_1^j$: $x_2^j$, $x_4^j$, $x_5^j$. On the other hand, if $2 \leq |N_{G_{j}}(x_1^j)|\leq 3$ then at least two neighbours of $x_1^j$ are chosen to $S_j$. In every case we have $|N_{G_{j-1}}(x_1^j)|\le 1$. If $|N_{G_{j}}(x_2^j)| = 4$ then $x_2^j$ and $x_3^j$ are adjacent, so $|N_{G_{j-1}}(x_2^j)|\le 3$. After removing $x_1^j$ and $x_2^j$, the vertex $x_3^j$ was the first in the list so $|N_{G_{j-1}}(x_3^j)|\le 4 \leq 5$. If $x_4^j$ was chosen as the first in the list then $|N_{G_{j-1}}(x_4^j)|\le 4$, otherwise at least one of its neighbours, i.e. $x_1^j$, is in $S_j$, so $|N_{G_{j-1}}(x_4^j)|\le 7$. Obviously $|N_{G_{j-1}}(x_5^j)|\le 9$.

\end{proof}


\subsection{$d$-dimensional grids, the general upper bound}

In Section \ref{lem:generalization} we give a general upper bound on the equitable list vertex arboricity of all graphs. Now we improve this bound for $d$-dimensional grids.

Assume that $d\geq 3$ and $n_1, \ldots ,n_{d-2}\in \mathbb{N}\setminus \{1\}$. Let us define the following family of graphs.

\medskip 
\noindent ${\cal H}(n_1,\ldots,n_{d-2})= \{G: \mbox{each component of}\; G \;\mbox{is isomorphic to}\; P_{n_1} \square \cdots \square P_{n_{d-2}}\square P_2 \\ \mbox{or}\; P_{n_1} \square \cdots \square P_{n_{d-2}}\}$.

\begin{lemma}
\label{lem:arboricity_general_bound}
Let $d\in \mathbb{N}$ with $d \geq 3$, $n_1, \ldots ,n_{d-2}\in \mathbb{N}\setminus \{1\}$ and $G=P_{n_1} \square \ldots \square P_{n_d}$. There is a graph $H\in {\cal H}(n_1,\ldots,n_{d-2})$ that covers all cycles of $G$.
\end{lemma}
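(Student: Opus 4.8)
The plan is to construct $H$ explicitly by an inductive ``peeling'' along the last coordinate direction, reusing the device that appears in all the preceding $3$- and $4$-dimensional theorems (Theorems~\ref{thm:grids-three-small}, \ref{four_dimensional_two}, \ref{four_dimensional_three}, \ref{four_dimensional_four}): view $G=P_{n_1}\square\cdots\square P_{n_d}$ as $n_d$ copies $G_1,\ldots,G_{n_d}$ of the $(d-1)$-dimensional grid $P_{n_1}\square\cdots\square P_{n_{d-1}}$, joined by ``vertical'' edges in the $d$th direction. I would proceed by induction on $d$. The base case $d=3$ is exactly what is needed to anchor the induction, and here each component must be isomorphic to $P_{n_1}\square P_2$ or $P_{n_1}$; the constructions inside Theorems~\ref{thm:grids-three-small} and~\ref{four_dimensional_three} already exhibit such spanning subgraphs covering all cycles, so I would cite or mildly adapt them.

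For the inductive step, assume every $(d-1)$-dimensional grid $G'=P_{n_1}\square\cdots\square P_{n_{d-1}}$ admits a cycle-covering spanning subgraph $H'\in\mathcal H(n_1,\ldots,n_{d-3})$ whose components are copies of $P_{n_1}\square\cdots\square P_{n_{d-3}}\square P_2$ or $P_{n_1}\square\cdots\square P_{n_{d-3}}$. Apply $H'$ inside each layer $G_i$, obtaining layerwise cycle coverage; the remaining danger is cycles that use vertical edges between consecutive layers. The key idea, mirroring the even/odd alternation used in the lower-dimensional proofs, is to \emph{alternate} which copy of $H'$ we install in odd versus even layers so that, whenever two consecutive layers are ``glued'' by a vertical edge that could complete an uncovered cycle, the two endpoints of that vertical edge lie in a common covered component. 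Concretely, I would keep all vertical edges of $G$ in $H$ and choose the intra-layer edges so that the components of $H$ merge pairs of consecutive layers into copies of $(P_{n_1}\square\cdots\square P_{n_{d-3}}\square P_2)\square P_2\cong P_{n_1}\square\cdots\square P_{n_{d-3}}\square P_2\square P_2$; this is where the extra $P_2$ factor in the definition of $\mathcal H$ comes from, and why the induction must allow components with one appended $P_2$.

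The main obstacle, and the step I would spend the most care on, is verifying that $H$ still covers \emph{all} cycles of $G$, not merely those confined to a single layer or to a single glued pair. The clean way to handle this is the same induction-on-$n_d$ argument used throughout the paper: assume $H$ restricted to the first $n_d-1$ layers covers all cycles of $P_{n_1}\square\cdots\square P_{n_{d-1}}\square P_{n_d-1}$, then show any putative uncovered cycle $C$ in the full grid must use a non-$H$ edge of the top layer $G_{n_d}$; tracing the forced neighbours of that edge's endpoints (all of whose other incident edges lie in $H$) pushes $C$ down into layer $n_d-1$ through a vertical edge whose matching horizontal edge is already in $H$, yielding the contradiction. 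The delicate bookkeeping is ensuring the alternating choice of $H'$-copy guarantees that the relevant horizontal edge in layer $n_d-1$ is indeed present; this forces the parity convention on the alternation and must be checked against the two possible component types produced by the inner induction hypothesis. Finally I would confirm that the resulting components are exactly of the two allowed isomorphism types, so that $H\in\mathcal H(n_1,\ldots,n_{d-2})$ as required.
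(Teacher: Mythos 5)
Your construction has an internal inconsistency that hides a fatal flaw. You say you would ``keep all vertical edges of $G$ in $H$'' while also claiming the components of $H$ merge \emph{pairs} of consecutive layers into copies of $P_{n_1}\square\cdots\square P_{n_{d-3}}\square P_2\square P_2$. These two claims are incompatible: if every vertical edge is present, every vertex of layer $i$ is joined in $H$ to its copy in layer $i+1$, so each component of $H$ sweeps through all $n_d$ layers and (for $n_d\geq 3$) cannot be isomorphic to either type allowed in $\mathcal{H}(n_1,\ldots,n_{d-2})$. Reading you charitably (vertical edges kept only inside pairs of layers), the deeper problem is the decision to install the \emph{inductively obtained} $H'$ inside each layer. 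That $H'$ is very sparse: its components span only $d-3$ of the layer's $d-1$ coordinate directions fully, one further direction only by a matching, and it contains \emph{no} edge at all in the remaining direction. Consequently, for any two consecutive layers whose connecting vertical edges are not in $H$, you can pick an intra-layer edge $uv$ missing from the first layer's copy of $H'$ whose translate $u'v'$ is also missing from the second layer's copy (already for $d=4$: one copy omits all direction-$3$ edges and all but a matching of direction-$2$ edges, the other omits all direction-$2$ edges, so some direction-$2$ edge is missing from both no matter how you alternate or shift the copies). Then $u,v,v',u'$ is a $4$-cycle of $G$ none of whose vertex pairs is joined by an edge of $H$, i.e., an uncovered cycle. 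No parity convention can repair this, because disjointness of the missing-edge sets of consecutive layers is impossible when each layer misses a full direction's worth of edges. Your final verification sketch also silently uses the fact that, at an endpoint of an uncovered top-layer edge, \emph{all} other intra-layer edges lie in $H$; that holds only when the per-layer missing set is a matching, which your set-up violates.

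The paper's proof does not induct on $d$ at all; it generalizes the construction of Theorem~\ref{four_dimensional_four} directly. Inside each layer $G_i\cong P_{n_1}\square\cdots\square P_{n_{d-1}}$ it keeps the \emph{entire} layer except a matching $E_i$ of first-direction edges, with the parity of the matching ($y_1$ odd versus $y_1$ even) alternating with $i$, and it puts \emph{no} vertical edges into $H$. The components of $G_i-E_i$ are then automatically $P_2\square P_{n_2}\square\cdots\square P_{n_{d-1}}$ or $P_{n_2}\square\cdots\square P_{n_{d-1}}$, so $H=\bigcup_i(G_i-E_i)\in\mathcal{H}$, and a single induction on $n_d$ finishes: an uncovered cycle must contain an edge $e\in E_{n_d}$; since every other intra-layer edge at the endpoints of $e$ lies in $H$, the cycle is forced down to layer $n_d-1$ through the two vertical edges, where the translated edge has the opposite parity and hence lies in $H$ --- a contradiction. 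The point you missed is that the per-layer subgraph must be dense (layer minus a matching), not the sparse $H'$ delivered by an induction on dimension.
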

\begin{proof}
The idea of determining a graph $H$ is the same as in the proof of Theorem \ref{four_dimensional_four}.
We can see $G$ as $n_d$ copies of a $(d-1)$-dimensional grid  $P_{n_1} \square \cdots \square P_{n_{d-1}}$ joined by some edges. Let $G_i=G[\{(y_1,\ldots,y_{d-1},i):y_j\in[n_j],j\in[d-1]\}]$, $i\in [n_d]$. To obtain $H$, we delete from every $G_i$ the matching $E_i$ defined as follows.


\begin{description}
    \item[Case 1] $i$ is odd
    $$E_i=\{(y_1,y_2,\ldots,y_{d-1},i)(y_1+1,y_2,\ldots,y_{d-1},i):\\y_1\in [n_1-1]_{ODD}\}.$$
    \item[Case 2] $i$ is even
    $$E_i=\{(y_1,y_2,\ldots,y_{d-1},i)(y_1+1,y_2,\ldots,y_{d-1},i):\\y_1\in [n_1-1]_{EVEN}\}.$$
\end{description}

\noindent In both cases we take $y_j\in[n_j]$ for $j\in[2,d-1]$.
\medskip
\noindent Put $H=\bigcup_{i\in[n_d]}(G_i-E_i)$. Note that $H\in {\cal H}(n_1,\ldots,n_{d-2})$. We prove by induction on $n_d$ that $H$ covers all cycles of $G$. Since $E_1$ is a matching of $G_1$, obviously $G_1-E_1$  covers all cycles of $G_1$. Let $G'=P_{n_1} \square \cdots \square P_{n_{d}-1}$ and $H'=\bigcup_{i\in[n_{d}-1]}(G_i-E_i)$. By the induction hypothesis, $H'$ covers all cycles of $G'$. Without  loss of generality we may assume that $n_d$ is odd. On the contrary, suppose that $G$ contains a cycle $C$ not covered by $H$. Thus $C$ contains an edge $e$ of $E_{n_d}$, say $e=(2r+1,y_2,\ldots,y_{d-1},n_d)(2r+2,y_2,\ldots,y_{d-1},n_d)$. So  vertices $(2r+1,y_2,\ldots,y_{d-1},n_d),(2r+2,y_2,\ldots,y_{d-1},n_d)$ are in $V(C)$. Since all edges of $G_{n_d}$ incident with $(2r+1,y_2,\ldots,y_{d-1},n_d)$ and $(2r+2,y_2,\ldots,y_{d-1},n_d)$, except $e$, are in $H$, we must have that $(2r+1,y_2,\ldots,y_{d-1},n_d-1)$ is a neighbour of $(2r+1,y_2,\ldots,y_{d-1},n_d)$ in $C$ and $(2r+2,y_2,\ldots,y_{d-1},n_d-1)$ is a neighbour of $(2r+2,y_2,\ldots,y_{d-1},n_d)$ in $C$. Thus $(2r+1,y_2,\ldots,y_{d-1},n_d-1),(2r+2,y_2,\ldots,y_{d-1},n_d-1)\in V(C)$, however $(2r+1,y_2,\ldots,y_{d-1},n_d-1)(2r+2,y_2,\ldots,y_{d-1},n_d-1)\in E(H)$, which contradicts that $C$ is not covered by $H$.
\end{proof}

\begin{observation}
\label{obs:Delta}
Let $d\in \mathbb{N}$, $n_1, \ldots ,n_{d-2}\in \mathbb{N}\setminus \{1\}$ and $H\in {\cal H}(n_1,\ldots,n_{d-2})$. If $d \geq 3$ then $\Delta(H)\le 2d-3$.
\end{observation}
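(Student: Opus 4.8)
The plan is to bound $\Delta(H)$ by analysing the degree of an arbitrary vertex in each of the two possible component types, using the standard fact that in a Cartesian product $G_1\square\cdots\square G_m$ the degree of a vertex $(v_1,\ldots,v_m)$ equals $\sum_{j=1}^m \deg_{G_j}(v_j)$. Since the maximum degree of $H$ is attained within one of its components, and every component is isomorphic either to $P_{n_1}\square\cdots\square P_{n_{d-2}}\square P_2$ or to $P_{n_1}\square\cdots\square P_{n_{d-2}}$, it suffices to bound the maximum degree of each of these two products.

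First I would record the elementary per-factor estimate: for a path $P_n$ with $n\ge 2$ every vertex has degree at most $2$, and the endpoints (in particular, both vertices of $P_2$) have degree exactly $1$. Consequently, in the degree-sum formula each factor $P_{n_i}$ with $n_i\ge 2$ contributes at most $2$, whereas the factor $P_2$ contributes at most $1$. This asymmetry between a general path factor and the $P_2$ factor is the whole point of the bound.

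Next I would treat the two component types separately. A component isomorphic to $P_{n_1}\square\cdots\square P_{n_{d-2}}\square P_2$ is a product of $d-1$ path factors, of which $d-2$ are the $P_{n_i}$ (each contributing at most $2$) and one is $P_2$ (contributing at most $1$); summing these contributions bounds the degree of any vertex by $2(d-2)+1 = 2d-3$. A component isomorphic to $P_{n_1}\square\cdots\square P_{n_{d-2}}$ is a product of $d-2$ path factors, each contributing at most $2$, so every vertex has degree at most $2(d-2) = 2d-4 \le 2d-3$.

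Combining the two cases gives $\Delta(H)\le 2d-3$, as claimed. There is no genuine obstacle here; the only thing to be careful about is the bookkeeping of the number of factors, so that the $d-2$ general path factors and the single $P_2$ factor are each counted exactly once, and the observation that the $P_2$ factor adds only $1$ rather than $2$ is precisely what yields the bound $2d-3$ instead of $2d-2$.
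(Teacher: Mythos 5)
Your proof is correct and matches the reasoning the paper leaves implicit: the Observation is stated without proof, and the intended justification is exactly your degree-sum argument for Cartesian products, with the $d-2$ path factors contributing at most $2$ each and the $P_2$ factor contributing only $1$, giving $2(d-2)+1=2d-3$ for the larger component type and $2d-4$ for the smaller one.
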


Observation \ref{obs:Delta} together with Theorem \ref{thm:Kierstead-Kostochka}(i)-(ii) and Lemma \ref{lem:covering} imply the following result.

\begin{theorem} Let $d,k\in \mathbb{N}$.
\begin{enumerate}[(i)]
\item If   $k\ge 8$ then every $5$-dimensional grid is equitably $k$-list arborable.
\item If $d\in [6,16]$ and $k\ge 2d-2+\frac{2d-4}{7}$ then every $d$-dimensional grid is equitably $k$-list arborable.
\item If $d\ge 17$ and $k\ge 2d-3+\frac{2d-3}{6}$ then every $d$-dimensional grid is equitably $k$-list arborable.
\end{enumerate}
\end{theorem}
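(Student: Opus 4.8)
The plan is to invoke the cycle-covering machinery and reduce the entire statement to the equitable choosability results of Kierstead and Kostochka (Theorem~\ref{thm:Kierstead-Kostochka}), exactly as announced in the sentence preceding the theorem. First I would take an arbitrary $d$-dimensional grid $G=P_{n_1}\square\cdots\square P_{n_d}$; by definition every factor satisfies $n_i\ge 2$, so in particular $n_1,\ldots,n_{d-2}\in\mathbb{N}\setminus\{1\}$ and the hypotheses of Lemma~\ref{lem:arboricity_general_bound} and Observation~\ref{obs:Delta} are met once $d\ge 3$ (which holds throughout, since $d\ge 5$ in all three parts). Lemma~\ref{lem:arboricity_general_bound} then produces a spanning subgraph $H\in\mathcal{H}(n_1,\ldots,n_{d-2})$ that covers all cycles of $G$, and Observation~\ref{obs:Delta} gives $\Delta(H)\le 2d-3$. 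By Lemma~\ref{lem:covering} it suffices to prove that $H$ is equitably $k$-choosable for the claimed values of $k$, so the remaining work is purely about the maximum degree of $H$.

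Set $r=2d-3$, so that $\Delta(H)\le r$ and we may apply Theorem~\ref{thm:Kierstead-Kostochka} with this $r$. The three parts correspond to the three regimes of that theorem. For part (i), $d=5$ gives $r=7\le 7$, so Theorem~\ref{thm:Kierstead-Kostochka}(i) yields equitable $k$-choosability of $H$ for $k\ge r+1=8$. For part (ii), $d\in[6,16]$ gives $r=2d-3\in[9,29]$, in particular $r\le 30$, so the first branch of Theorem~\ref{thm:Kierstead-Kostochka}(ii) applies and gives the threshold $k\ge r+1+\tfrac{r-1}{7}$. For part (iii), $d\ge 17$ gives $r=2d-3\ge 31$, so the second branch applies with threshold $k\ge r+\tfrac{r}{6}$. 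In each case Lemma~\ref{lem:covering} converts equitable $k$-choosability of $H$ into equitable $k$-list arborability of $G$.

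The only genuine content beyond bookkeeping is to verify that substituting $r=2d-3$ into these thresholds reproduces the exact expressions in the statement: $r+1+\tfrac{r-1}{7}=2d-2+\tfrac{2d-4}{7}$ for (ii), and $r+\tfrac{r}{6}=2d-3+\tfrac{2d-3}{6}$ for (iii). I would carry out these two one-line simplifications and confirm that the regime boundaries ($d=5\leftrightarrow r=7$, $d\in[6,16]\leftrightarrow r\in[9,29]$, $d\ge 17\leftrightarrow r\ge 31$) line up cleanly with the case split of Theorem~\ref{thm:Kierstead-Kostochka}, leaving no grid uncovered at the transitions $d=5$ and $d=16\to 17$.

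I do not anticipate a real obstacle: the result is essentially a corollary of the tools already assembled. The one point demanding care is that $\Delta(H)\le 2d-3$ is only an upper bound, so I deliberately use $r=2d-3$ as a safe worst-case value rather than the actual maximum degree of $H$; this is legitimate because Theorem~\ref{thm:Kierstead-Kostochka} requires only $\Delta(H)\le r$, and the stated thresholds are obtained by evaluating its bound at $r=2d-3$ directly. Thus the \emph{main obstacle} is merely confirming that the numeric thresholds and the range of $r$ partition correctly into the three cases.
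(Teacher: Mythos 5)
Your proposal is correct and follows exactly the paper's own argument: the paper proves this theorem in one sentence by combining Lemma~\ref{lem:arboricity_general_bound}, Observation~\ref{obs:Delta} (giving $\Delta(H)\le 2d-3$), Theorem~\ref{thm:Kierstead-Kostochka}(i)--(ii), and Lemma~\ref{lem:covering}, which is precisely your reduction with $r=2d-3$. Your verification of the arithmetic and the alignment of the regime boundaries ($r=7$, $r\in[9,29]$, $r\ge 31$) with the cases of Theorem~\ref{thm:Kierstead-Kostochka} is the only content the paper leaves implicit, and you carry it out correctly.
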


\section{Concluding remarks}
\label{conclusion}

Note that   our results  confirm Zhang's conjectures  for $d$-dimensional grids, when $d \in [2,4]$. For many cases they are even stronger than the conjectures. More precisely, we have obtained the following facts.

\begin{corollary}
Let $k \in \mathbb{N}$ and $d\in \{2,3,4\}$. If $G$ is a $d$-dimensional grid and $k\ge \left\lceil (\Delta(G)+1)/2\right\rceil$ then $G$ is equitably $k$-list arborable.
\end{corollary}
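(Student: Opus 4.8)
The plan is to reduce the statement to a finite case analysis driven entirely by $\Delta(G)$, and in each case to invoke one of the threshold-type theorems proved earlier in this section. First I would record the exact maximum degree of a grid. Writing $G=P_{n_1}\square\cdots\square P_{n_d}$ and letting $a=|\{i:n_i=2\}|$ be the number of factors equal to $P_2$, a vertex of maximum degree contributes $1$ from each factor $P_2$ and $2$ from each factor $P_{n_i}$ with $n_i\ge 3$; hence $\Delta(G)=2d-a$ and $\left\lceil(\Delta(G)+1)/2\right\rceil=\left\lceil(2d-a+1)/2\right\rceil$. Since the Cartesian product is commutative, I may reorder the factors so that all $P_2$ factors come first, which lets me match $G$ against the specific grid families appearing in the earlier theorems.

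Next I would split by dimension. For $d=2$ one has $\Delta(G)\ge 2$, so $\left\lceil(\Delta(G)+1)/2\right\rceil\ge 2$, and Theorem \ref{thm:grids_two} (every $2$-dimensional grid is equitably $k$-list arborable for $k\ge 2$) immediately yields the claim. For $d=3$ I would distinguish whether $a\ge 1$ or $a=0$. If $a\ge 1$ then, after reordering, $G\cong P_2\square P_{n_2}\square P_{n_3}$, so Theorem \ref{thm:grids-three-small} gives arborability for every $k\ge 2$, and since $\left\lceil(\Delta(G)+1)/2\right\rceil\ge 2$ this covers the required range; if $a=0$ then $\left\lceil(\Delta(G)+1)/2\right\rceil=4$ and Theorem \ref{main3} (all $3$-dimensional grids, $k\ge 3$) covers the whole half-line $k\ge 4$. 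The only delicate point here is the $3$-cube, where $\left\lceil(\Delta(G)+1)/2\right\rceil=2$ and Theorem \ref{main3} does not reach $k=2$; it is precisely Theorem \ref{thm:grids-three-small}, applied with $n_2=n_3=2$, that supplies the missing value.

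For $d=4$ I would again organise by $a$. When $a\ge 3$, $G\cong P_2\square P_2\square P_2\square P_{n_4}$ and Theorem \ref{four_dimensional_two} gives $k\ge 2$; when $a=2$, $G\cong P_2\square P_2\square P_{n_3}\square P_{n_4}$ and Theorem \ref{four_dimensional_three} gives $k\ge 3$; in both situations the relevant threshold $\left\lceil(\Delta(G)+1)/2\right\rceil\in\{3,4\}$ is at least the theorem's threshold, so the entire range is covered. When $a=0$, $\left\lceil(\Delta(G)+1)/2\right\rceil=5$ and Theorem \ref{main4} ($k\ge 5$) finishes the case. The one case requiring genuine assembly is $a=1$: then $\left\lceil(\Delta(G)+1)/2\right\rceil=4$, the grid has only a single $P_2$ factor and so fits none of the special families, and I must combine Theorem \ref{four_dimensional_four} (the exact statement that every $4$-dimensional grid is equitably $4$-list arborable) with Theorem \ref{main4} ($k\ge 5$) to conclude arborability for every $k\ge 4$.

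I expect the main obstacle to be conceptual rather than computational: equitable colourability is not monotone in $k$, so it does not suffice to establish the smallest admissible $k$ and argue upward. Each case must verify that the whole half-line $k\ge\left\lceil(\Delta(G)+1)/2\right\rceil$ is covered by threshold-type results, and the only place where two separate theorems must be stitched together to fill this half-line is the $d=4$, $a=1$ case, where the value $k=4$ comes from Theorem \ref{four_dimensional_four} and all larger values from Theorem \ref{main4}.
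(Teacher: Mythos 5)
Your proposal is correct and takes essentially the same route the paper intends: the corollary is exactly the assembly of Theorems \ref{thm:grids_two}, \ref{thm:grids-three-small}, \ref{main3}, \ref{four_dimensional_two}, \ref{four_dimensional_three}, \ref{four_dimensional_four} and \ref{main4}, organised by the number of $P_2$ factors, with the $d=4$, one-$P_2$ case (threshold $4$) stitched together from Theorem \ref{four_dimensional_four} for $k=4$ and Theorem \ref{main4} for $k\ge 5$. Your explicit attention to non-monotonicity in $k$ and to the $3$-cube case matches the care the paper's results require.
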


\begin{corollary}
Let  $d,k\in \mathbb{N}$ with $d\geq 2$ and $k\geq 2$. If $G$ is a $d$-dimensional grid with $\Delta(G)\le 5$ then $G$ is equitably $k$-list arborable.
\end{corollary}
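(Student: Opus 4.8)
The plan is to reduce the statement to the three low-dimensional families already settled and to isolate the single genuinely new case. First I would record the degree arithmetic. If $G=P_{n_1}\square\cdots\square P_{n_d}$ and $a$ denotes the number of factors with $n_i\ge 3$, then a vertex of maximum degree contributes $2$ for each such factor and $1$ for each factor equal to $P_2$, so $\Delta(G)=d+a$. Hence $\Delta(G)\le 5$ forces $d\le 5$ and $a\le 5-d$; in particular no grid of dimension $d\ge 6$ satisfies the hypothesis, and only $d\in\{2,3,4,5\}$ survive.

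Next I would dispose of each surviving dimension, using commutativity and associativity of $\square$ to move all $P_2$-factors to the front. For $d=2$ nothing is forced and Theorem~\ref{thm:grids_two} gives the claim for all $k\ge 2$. For $d=3$ we have $a\le 2$, so at least one factor is $P_2$ and $G\cong P_2\square P_{n_2}\square P_{n_3}$; Theorem~\ref{thm:grids-three-small} applies. For $d=4$ we have $a\le 1$, so at least three factors are $P_2$ and $G\cong P_2\square P_2\square P_2\square P_{n_4}$ (with $n_4=2$ giving the cube $Q_4$); Theorem~\ref{four_dimensional_two} applies. Each of these already yields equitable $k$-list arborability for every $k\ge 2$, so these cases are immediate citations.

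The remaining case is $d=5$, where $a=0$ forces $G=Q_5$, the $5$-dimensional cube. Here I would invoke Lemma~\ref{lem:covering}: it suffices to exhibit a spanning subgraph $H$ of $Q_5$ that covers all cycles and is equitably $k$-choosable for every $k\ge 2$. Since the only tool reaching down to $k=2$ is Lemma~\ref{lem:equitable-colouring-cycles}, I would aim for $H$ bipartite with $\Delta(H)\le 2$ (bipartiteness being automatic as $H\subseteq Q_5$). Concretely, writing $Q_5=Q_4\,\square\,P_2$ as two copies of $Q_4$ joined by a perfect matching in the fifth coordinate, I would seek a $2$-factorisation $\{F_1,F_2\}$ of the $4$-regular graph $Q_4$ in which \emph{each} factor already covers all cycles of $Q_4$, and place $F_1$ on one copy and $F_2$ on the other. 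Such an $H$ has $\Delta(H)=2$; it covers every cycle contained in a single copy because each $F_i$ covers all cycles of $Q_4$, and it covers every $4$-cycle using the fifth coordinate because $F_1\cup F_2=E(Q_4)$ guarantees that the relevant $Q_4$-edge lies in one of the two copies. Then Lemma~\ref{lem:equitable-colouring-cycles} makes $H$ equitably $k$-choosable for all $k\ge 2$, and Lemma~\ref{lem:covering} finishes the proof.

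The hard part will be this last construction. The naive analogue of Theorem~\ref{four_dimensional_two} — selecting two parallel faces inside each of the four $Q_3$-subcubes — fails for $Q_5$: the $4$-cycles living in the two \emph{extra} coordinates (the fibres $P_2\square P_2$ over a fixed $Q_3$-vertex) then receive no $H$-edge, while the degree budget of $2$ is already exhausted on the intra-subcube faces. So the genuine work is to produce a $2$-factorisation of $Q_4$ (equivalently a $\Delta\le 2$ spanning subgraph of $Q_5$) whose factors simultaneously meet every $4$-, $6$-, $8$-, and $10$-cycle, and then to verify by a direct argument, in the spirit of the cycle-by-cycle analyses in Theorems~\ref{thm:grids-three-small}, \ref{four_dimensional_two}, and \ref{four_dimensional_four}, that the cycles crossing between the two copies (which alternate layers along matching edges) are covered as well. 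Should no such factorisation with the full covering property exist, the fallback is to split the range of $k$: treat $k=2$ through Lemma~\ref{lem:two-equitable-list-colourable} with a $2$-choosable cover admitting a near-perfect matching, and $k\ge 3$ through a cover whose components are $2$-dimensional grids, colourable by Lemma~\ref{lem:equitable-colouring-p_n_one-p_n_two-final}. I expect the unified $\Delta(H)\le 2$ cover to be the cleaner route, and establishing it is the crux of the whole statement.
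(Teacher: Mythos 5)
Your degree arithmetic and the cases $d\in\{2,3,4\}$ are correct and coincide with the paper's own justification: the corollary is stated there without proof, as an immediate consequence of Theorem~\ref{thm:grids_two} ($d=2$), Theorem~\ref{thm:grids-three-small} ($d=3$, at least one $P_2$-factor) and Theorem~\ref{four_dimensional_two} ($d=4$, at least three $P_2$-factors), each already valid for all $k\ge 2$. The genuine gap is the case $d=5$, i.e.\ $G=Q_5$, which you rightly observe is admitted by the hypothesis (since $\Delta(Q_5)=5$) but which your proposal does not settle: everything after ``the hard part will be this last construction'' is a plan, not a proof. Two concrete remarks on that plan. First, your worry about the existence of the factorisation is unfounded: $Q_4$ decomposes into two Hamiltonian cycles $F_1,F_2$ (a classical fact), and each $F_i$ does cover all cycles of $Q_4$ in the paper's sense, because a cycle containing no edge of $F_1$ lies entirely in $F_2\cong C_{16}$, hence equals $F_2$, hence contains both endpoints of every edge of $F_1$. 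Second, the step you defer is the entire difficulty and is not routine: with $F_1$ placed on the bottom copy and $F_2$ on the top copy, your argument covers the within-copy cycles and the vertical $4$-cycles, but a crossing cycle that alternates between arcs of $F_2$ in the bottom copy and arcs of $F_1$ in the top copy carries no edge of $H$, and nothing in your setup prevents it from having no $H$-chord either; whether some choice of decomposition excludes all such cycles is precisely the open crux, and you neither prove this nor even fix a concrete $F_1,F_2$. The fallbacks do not close the hole: the ``$k\ge 3$'' fallback leaves $k=2$ untouched, and for $k=2$ observe that every $4$-cycle of $Q_5$ must contain an actual edge of the cover (bipartite $4$-cycles have no chords), while by Theorem~\ref{thm:two-list-colourable} each component of a $2$-choosable graph exceeds a spanning tree by at most two edges, so it is far from clear that any cover usable in Lemma~\ref{lem:two-equitable-list-colourable} exists at all.

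For comparison with the paper: the paper's own machinery reaches $Q_5$ only for $k\ge 5$ (Theorem~\ref{thm:general-upper-bound}(i) with $r=4$), so the corollary as printed is itself not fully backed for $Q_5$ with $k\in\{2,3,4\}$; its two companion corollaries are explicitly restricted to $d\in\{2,3,4\}$, and this one arguably should have been as well. In short, you have correctly located the weak point of the statement, and your argument for $d\le 4$ is complete and identical to the intended one; but the $d=5$ case remains unproven in your write-up, so the proposal as a whole has a real gap.
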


\begin{corollary}
Let $k \in \mathbb{N}$, $d\in \{2,3,4\}$, and let $G$ be a $d$-dimensional grid with $\Delta(G)\geq 6$ that is different from $P_{n_1}\square P_{n_2}\square P_{n_3}\square P_2$, $n_1,n_2,n_3\in \mathbb{N}\setminus \{1,2\}$.
If $k\ge \left\lfloor (\Delta(G))/2\right\rfloor$ then $G$  is equitably $k$-list arborable.
\end{corollary}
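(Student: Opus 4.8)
The plan is to reduce the statement to a short case analysis driven entirely by the value of $\Delta(G)$, and then to invoke the grid-specific theorems already proved above. First I would compute $\Delta(G)$ for a $d$-dimensional grid $G=P_{n_1}\square\cdots\square P_{n_d}$. Since the degree of a vertex is the sum of its degrees in the factors, and a factor $P_{n_i}$ contributes $2$ to the maximum degree when $n_i\ge 3$ and exactly $1$ when $n_i=2$, we obtain $\Delta(G)=2d-a$, where $a$ denotes the number of factors equal to $P_2$. This immediately lets me enumerate, for each $d\in\{2,3,4\}$, exactly which grids satisfy $\Delta(G)\ge 6$: for $d=2$ we have $\Delta(G)\le 4$, so the hypothesis is never met and there is nothing to prove; for $d=3$ the condition $\Delta(G)\ge 6$ forces $a=0$ and hence $\Delta(G)=6$; for $d=4$ it forces $a\in\{0,1,2\}$, giving $\Delta(G)\in\{8,7,6\}$ respectively.

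Next I would dispatch the admissible cases one by one, checking in each that the threshold $k\ge\lfloor\Delta(G)/2\rfloor$ matches the hypothesis of the theorem I invoke. For $d=3$ with $\Delta(G)=6$ the required bound is $k\ge 3$, which is precisely the conclusion of Theorem \ref{main3}. For $d=4$ with $\Delta(G)=8$ (all four factors of size at least $3$) the required bound is $k\ge 4$; Theorem \ref{four_dimensional_four} supplies the case $k=4$ and Theorem \ref{main4} supplies every $k\ge 5$, so the whole range $k\ge 4$ is covered. For $d=4$ with $\Delta(G)=6$ we have $a=2$, so by commutativity of the Cartesian product $G\cong P_2\square P_2\square P_{n_3}\square P_{n_4}$ with $n_3,n_4\ge 3$; the required bound is again $k\ge 3$, which is exactly what Theorem \ref{four_dimensional_three} delivers for this family (and that theorem yields the full range $k\ge 3$, not merely the smallest value).

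The only situation with $\Delta(G)\ge 6$ left over is $d=4$ with $\Delta(G)=7$, i.e.\ $a=1$; here $G\cong P_{n_1}\square P_{n_2}\square P_{n_3}\square P_2$ with $n_1,n_2,n_3\ge 3$, which is exactly the family excluded from the statement. Recognizing this is the substantive point rather than a calculation, and it is where I expect the only real subtlety to lie: the threshold $\lfloor 7/2\rfloor=3$ would demand equitable $3$-list arborability, yet the tools developed above only establish equitable $k$-list arborability of arbitrary $4$-dimensional grids for $k\ge 4$ (Theorem \ref{four_dimensional_four}), and none of the covering arguments of the paper produces, for this particular grid, a spanning subgraph of maximum degree at most two nor a suitable $3$-partition certificate. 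Thus the exclusion is not cosmetic: it marks the precise boundary of what the earlier results can reach, and the proof amounts to verifying that every non-excluded grid with $\Delta(G)\ge 6$ falls into one of the three handled cases above.
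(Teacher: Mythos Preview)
Your proposal is correct and follows exactly the route the paper intends: the corollary is stated without proof precisely because it is meant to be read off from Theorems \ref{main3}, \ref{four_dimensional_three}, \ref{four_dimensional_four}, and \ref{main4} via the case analysis on $\Delta(G)=2d-a$ that you carry out. Your identification of the excluded family as precisely the $d=4$, $a=1$ case, and your explanation of why the available tools stop at $k\ge 4$ there, are spot on.
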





Since  $d$-dimensional grids have many special properties, we expect that the results that are better than Zhang's conjectures hold for almost all  of them. Among others, $d$-dimensional grids are bipartite and $d$-degenerate. The equitable colouring of such classes of graphs is analyzed in many papers. For instance, it was proven in  \cite{LiWu96} that the inequality $\chi^=(G)\le \Delta(G)$ holds for every connected bipartite graph $G$.  We improve this result for all $d$-dimensional grids. The following two theorems will help us to post some conjectures.

\begin{theorem}
\label{bounded}
Let $d,k\in \mathbb{N}$ with $d \geq 2$, and let $G$ be a $d$-dimensional grid. If  $k \geq 2$ then there exists an equitable proper $k$-colouring  of $G$.  
\end{theorem}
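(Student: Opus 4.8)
The plan is to produce, for each $k\ge 2$, a partition of $V(G)$ into $k$ independent sets whose sizes are all equal to $\lceil N/k\rceil$ or $\lfloor N/k\rfloor$, where $N=|V(G)|$; such a partition is exactly an equitable proper $k$-colouring. Write $N=qk+s$ with $q\in\mathbb N_0$, $s\in[0,k-1]$; if $N\le k$ we simply give the vertices distinct colours, so assume $q\ge 1$. The structural object I would lean on is the family of level sets $D_c=\{(y_1,\dots,y_d): y_1+\cdots+y_d=c\}$. Since every edge of $G$ joins two vertices whose coordinate sums differ by exactly $1$, each $D_c$ is independent and $D_c\cup D_{c'}$ is independent whenever $|c-c'|\ge 2$; in particular the two parity classes $A=\bigcup_{c\ \mathrm{even}}D_c$ and $B=\bigcup_{c\ \mathrm{odd}}D_c$ are independent. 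First I would record that this bipartition is balanced by the signed count $\sum_{(y_1,\dots,y_d)}(-1)^{y_1+\cdots+y_d}=\prod_{\ell=1}^{d}\bigl(\sum_{y=1}^{n_\ell}(-1)^{y}\bigr)$, which is $0$ if some $n_\ell$ is even and $\pm 1$ if all $n_\ell$ are odd. Hence $|A|=\lceil N/2\rceil$ and $|B|=\lfloor N/2\rfloor$, and taking the classes $A,B$ already settles $k=2$.

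For $k\ge 3$ the natural first attempt is the colouring $(y_1+\cdots+y_d)\bmod k$: it assigns to each level set $D_c$ the single colour $c\bmod k$, so consecutive level sets get distinct colours and the colouring is proper. The trouble is that colour $r$ then becomes the union of the level sets $D_c$ with $c\equiv r\pmod k$, and a short convolution computation (the level-set sizes are an ``almost flat'' profile convolved with the residue pattern) shows these unions need not be balanced, so this colouring is usually not equitable. The real task is therefore to \emph{split} level sets across colours so as to equalise class sizes. Along the linear order of the vertices obtained by increasing coordinate sum, distributing the colours in round-robin fashion makes every class automatically of size $\lceil N/k\rceil$ or $\lfloor N/k\rfloor$; within a single $D_c$ any such split is harmless because $D_c$ is independent. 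Thus the only thing that can go wrong is a pair of \emph{adjacent} vertices lying in consecutive level sets $D_c,D_{c+1}$ that receive the same colour.

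Reconciling the size requirement with this adjacency requirement is the main obstacle, and it is genuinely unavoidable: working only with $A$ and $B$, one must express $\lceil N/2\rceil$ and $\lfloor N/2\rfloor$ as sums of $q$'s and $(q+1)$'s with $p$ and $k-p$ parts, which fails precisely when the interval $[\lceil |A|/(q+1)\rceil,\lfloor |A|/q\rfloor]$ is empty or misses the matching interval for $B$ (the smallest such ``gap'' instance is $P_2\square P_5$ with $k=3$, where $|A|=|B|=5$ is not a sum of $3$'s and $4$'s, and such gaps recur for arbitrarily large grids). In these cases I would introduce a bounded number of \emph{straddling} colour classes that absorb the non-representable leftover of $A$ together with suitably many vertices of $B$, arranging independence by drawing the two parts from level sets at distance $\ge 2$, or more concretely by an explicit adjacency-avoiding choice; the remaining vertices then split cleanly as sums of $q$ and $q+1$. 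The delicate point, on which I would spend the most care, is the thin and small grids, where level sets are short and the $2d$ local adjacencies are tight, so that verifying both that the corrected remainders become representable and that the required non-adjacent companions exist calls for a short explicit case analysis.

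Finally I would emphasise that the dimension $d$ enters the argument only through three structural facts used above — bipartiteness, the balance $|A|-|B|\in\{0,1\}$, and the abundance of mutually non-adjacent level sets $D_c$ — all of which hold for every $d$-dimensional grid. Consequently the same construction, including the straddling correction, applies verbatim in all dimensions $d\ge 2$, which is what allows the statement to be uniform in $d$.
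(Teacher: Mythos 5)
Your $k=2$ case is complete and correct (up to the harmless point that when all $n_\ell$ are odd and $d$ is odd the signed sum is $-1$, so the roles of $A$ and $B$ swap). But for $k\ge 3$ --- which is the bulk of the theorem --- what you have written is a plan, not a proof. The round-robin assignment along the level-set order guarantees the class sizes but not properness, and you acknowledge that the whole difficulty is to reconcile the two. Your proposed fix, ``straddling'' colour classes whose $A$-part and $B$-part come from level sets at distance at least $2$, is never established: you do not prove (a) that the leftover sizes of $A$ and $B$ can always be repaired by such classes, i.e.\ that the resulting integer decomposition problem (writing $|A|$ and $|B|$, after removing the straddling classes, as sums of $q$'s and $(q+1)$'s with the correct total number of parts) is always solvable, nor (b) that the straddling classes can always be chosen independent, which is exactly where thin and small grids are dangerous, since they may have very few pairwise non-adjacent level sets to draw from. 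You flag (b) yourself as needing ``a short explicit case analysis,'' but that analysis is absent, and since the entire content of the theorem for $k\ge 3$ is concentrated in precisely this step, this is a genuine gap rather than a routine omission.

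For comparison, the paper's proof sidesteps both obstructions by a construction that is orthogonal to yours: it proceeds by induction on the dimension. It colours $P_{n_1}$ by $y_1 \bmod k$; then, given an equitable proper $k$-colouring $c_i$ of the $i$-dimensional sub-grid in which exactly $p$ colours occur $\lceil (n_1\cdots n_i)/k\rceil$ times, it colours the $(i+1)$-st dimension by repeating $c_i$ on every layer but shifting the colours cyclically by $p$ (by $1$ when $p=k$) from each layer to the next. Properness across layers holds because the shift is nonzero modulo $k$, and equitability propagates because the ``excess'' colours rotate cyclically through all $k$ colours as the layers advance. This gives a fully explicit colouring, uniform in $k$ and $d$, with no representability questions and no case analysis --- exactly the two things your approach still owes.
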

\noindent

The concept of layers in $d$-dimensional grids, used until now, must be extended on the purpose of the proof of Theorem \ref{bounded}. Let $G=P_{n_1} \square \cdots \square P_{n_d}$ and $\{i_1,\ldots,i_s\}$ be any  $s$-subset of indexes from $[d]$. Moreover, let 
$(a_{i_1}, \ldots ,a_{i_s})$ be a fixed $s$-tuple from $[n_{i_1}]\times \cdots \times [n_{i_s}]$. Then each graph induced in $G$ by the set 
$$\{(y_1, \ldots ,y_d):\; y_{i_1}=a_{i_1}, \ldots ,y_{i_s}=a_{i_s}\}$$
 is called an \emph{$s$-layer} of $G$. Note that the layers used until now are $1$-layers.

\begin{prooftw}{\ref{bounded}}
Let   $k$ be fixed and $G=P_{n_1} \square \cdots \square P_{n_d}$ with $n_1, \ldots ,n_d\in \mathbb{N}\setminus \{1\}$. We construct a proper $k$-colouring of $G$ in which every colour class has the cardinality either $\left\lceil |V(G)|/k \right\rceil$ or $\left\lfloor |V(G)|/k \right\rfloor$. The construction is given in $d$ stages. For $i\in [d]$, in the $i-th$ stage we describe a proper $k$-colouring $c_i$ of an $i$-dimensional grid $P_{n_1} \square \cdots \square P_{n_i}$ which is a $(d-i)$-layer $G_i$  of $G$ induced in $G$ by the set of vertices $V_i$, where 
$$V_i=\{(y_1, \ldots ,y_i,\underbrace{1,\ldots ,1}_{d-i}):\; y_1\in [n_1], \ldots ,y_i\in [n_i]\}.$$
We construct a proper $k$-coloring $c_{i+1}$ on $V_{i+1}$ as an extention of a proper $k$-colouring $c_i$ on $V_i$. Finally, we obtain a proper $k$-colouring $c_d$ of $G$. For each $i\in [d]$ we care for $c_i$ to be equitable, which means that each colour class of $c_i$ is of the cardinality either $\left\lceil (n_1\cdots n_i)/k \right\rceil$ or $\left\lfloor (n_1\cdots n_i)/k \right\rfloor$.

Let us start with the construction of $c_1$. In this case $G_1=P_{n_1}$ and we put $c_1((y_1,\underbrace{1,\ldots ,1}_{d-1}))\equiv y_1(\bmod\; k).$ Thus, depending on $n_1$, each of $k$ colours arises either  $\left\lceil n_1/k \right\rceil$ or $\left\lfloor n_1/k \right\rfloor$  times and moreover, $c_1$ is a proper $k$-colouring of $G_1$. Note that this time we use colors from $[0,k-1]$.

Suppose that, for some $i\in [d-1]$, the colouring $c_i$ is constructed. Of course $c_i$ satisfies all requirements mentioned before. Now we permute coloures used in $c_i$ on vertices in $V_i$ (recall that $|V_i|=n_1\cdots n_i$) in such a way that each of the coloures $1, \ldots ,p$ is used $\left\lceil (n_1\cdots n_i)/k\right\rceil$ times and each of the remaining $k-p$ coloures $p+1, \ldots ,k$ is used $\left\lfloor (n_1\cdots n_i)/k \right\rfloor$ times. Of course it could be $p=k$. Now let us define  $c_{i+1}$ for each tuple  $(y_1, \ldots ,y_{i+1})\in [n_1]\times \cdots \times [n_{i+1}]$.  We put\\   
$c_{i+1}((y_1,\ldots y_{i},y_{i+1},\underbrace{1,\ldots ,1}_{d-i-1}))=\left\{\begin{matrix}
(c_{i}((y_1,\ldots y_{i},\underbrace{1,\ldots ,1}_{d-i}))+p(y_{i+1}-1))(\bmod \; k), &\mbox{if}\; p\neq k\\
(c_{i}((y_1,\ldots y_{i}\underbrace{1,\ldots ,1}_{d-i}))+y_{i+1}-1)(\bmod \; k), & \mbox{if}\; p=k\\
\end{matrix}\right.$.

Note that $c_{i+1}$ is proper. Indeed, the graph induced in $G_{i+1}$ by vertices with  fixed coordinate $y_{i+1}$ is isomorphic to $G_i$ and is coloured according  to $c_i$ (with permuted coloures). Moreover, each edge $e$ of $G_{i+1}$ that is not an edge of any copy of $G_i$ (any of the $n_{i+1}$ layers of $G_{i+1}$ that are isomorphic to $G_i$), joins vertices from the consecutive copies of $G_i$ that are consecutive layers of $G_{i+1}$. Hence $e$ has end vertices coloured with  $j$ and $(j+p)(\bmod\; k)$, when $p\neq k$ and $j$ and $(j+1)(\bmod \; k)$, when $k=p$ (for some $j\in [k]$). In both cases these two coloures are different. Thus $c_{i+1}$ is proper.

Next we have to observe that $c_{i+1}$ is equitable. Suppose that $p=k$. In this case each of $k$ coloures arises in $c_i$ on the same number of vertices  in $V_i$. Since in $G_{i+1}$ each of $n_{i+1}$ copies of $G_i$ is coloured in the same manner (with permuted coloures) we can see that in the whole graph $G_{i+1}$ each colour arises the same number $(n_1\cdots n_{i+1})/k$ of times. Consequently $c_{i+1}$ is equitable in this case. Now, suppose that $p\neq k$. Recall that the vertices of the first layer of $G_{i+1}$ are coloured  in such a way that coloures $1, \ldots ,p$ arise one more than coloures $p+1, \ldots k$. In the second layer the coloures $(p+1)(\bmod \; k), \ldots ,(p+p)(\bmod \; k)$ arise one more than the remaining $k-p$ coloures $(p+p+1)(\bmod \; k), \ldots (p+p+k-p)(\bmod \; k)$ and so on. Thus we use coloures cyclically, which guarantees that $c_{i+1}$ is equitable also in this case.
\end{prooftw}

 It is very easy to observe the following fact valid for all $d$-degenerate graphs.

\begin{theorem}
\label{arborable}
Let $d,k\in \mathbb{N}$. If $k \geq \left\lceil (d+1)/2 \right\rceil$ then every $d$-degenerate graph is $k$-list arborable. 
\end{theorem}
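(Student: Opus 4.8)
The plan is to combine a degeneracy ordering of $G$ with a single greedy pass, using the observation that a colour class is automatically acyclic as soon as every vertex has at most one neighbour of its own colour among the vertices placed before it in the ordering.

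First I would fix an ordering $v_1,\ldots,v_n$ of $V(G)$ witnessing $d$-degeneracy. Since every subgraph of $G$ has a vertex of degree at most $d$, I can repeatedly delete a vertex of degree at most $d$ from the current graph and let $v_n,v_{n-1},\ldots,v_1$ be the vertices in the order of deletion. Then for each $i$ the vertex $v_i$ has at most $d$ neighbours among $\{v_1,\ldots,v_{i-1}\}$; I will call these its \emph{back-neighbours}.

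Next, given a $k$-uniform list assignment $L$, I would colour $v_1,\ldots,v_n$ one at a time. When colouring $v_i$, declare a colour \emph{forbidden} if it already appears on at least two of the back-neighbours of $v_i$. Because $v_i$ has at most $d$ back-neighbours and each forbidden colour consumes at least two of them, at most $\lfloor d/2\rfloor$ colours are forbidden. Since $k\ge\lceil (d+1)/2\rceil=\lfloor d/2\rfloor+1>\lfloor d/2\rfloor$, the list $L(v_i)$ contains a non-forbidden colour, which I assign to $v_i$. In the resulting $L$-colouring every vertex therefore has at most one back-neighbour sharing its colour.

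It remains to verify acyclicity of each colour class, which is the only genuine point of the argument. Suppose some colour class contained a cycle $C$, and let $v_i$ be its vertex of largest index. The two neighbours of $v_i$ on $C$ lie in the same colour class and have smaller index than $v_i$, so they are two back-neighbours of $v_i$ of the same colour as $v_i$, contradicting the construction. Hence each colour class induces a forest, so the colouring certifies the required property; as $L$ was an arbitrary $k$-uniform list assignment, $G$ is $k$-list arborable. The hard part is essentially only the bookkeeping: confirming that $\lfloor d/2\rfloor$ forbidden colours is the correct count and that $\lfloor d/2\rfloor+1$ coincides with $\lceil (d+1)/2\rceil$, after which everything reduces to the standard degeneracy greedy.
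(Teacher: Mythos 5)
Your proof is correct and follows essentially the same route as the paper: a degeneracy ordering followed by a greedy pass in which each vertex receives a list colour appearing at most once among its already-coloured neighbours, with the count $\lfloor d/2\rfloor+1=\lceil (d+1)/2\rceil$ guaranteeing availability. Your explicit verification of acyclicity via the largest-index vertex of a hypothetical monochromatic cycle is a slightly more careful rendering of the paper's terser closing remark, but it is the same argument.
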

\begin{proof}
Let $k$ be fixed. We order vertices  $v_i, \ldots ,v_n$ of $G$ such  that $\deg_{G[\{v_1, \ldots ,v_i\}]}(v_i)\le d$. Such an ordering always exists since $G$ is $d$-degenerate. Let $L$ be an arbitrary $k$-uniform list assignment for $G$. We construct an $L$-colouring of $G$ whose each colour class induces an acyclic subgraph of $G$. We do it, step by step, putting on a vertex $v_i$ a colour from its list that is not present more than once on previously coloured vertices $v_1, \ldots ,v_{i-1}$. Since the size of each list is at least $\left\lceil (d+1)/2 \right\rceil$, such a colour  exists. Obviously, we obtained an $L$-colouring for $G$. Moreover, putting the colour on $v_i$ we do not produce any monochromatic cycle since $v_i$ has at most one neighbour in the colour of $v_i$.
\end{proof}

As we mentioned previously,  a $d$-dimensional grid is $d$-degenerate graph and hence it is $k$-list  arborable for every $k \geq \left\lceil (d+1)/2\right\rceil$, by Theorem \ref{arborable}. Furthermore, when $k\neq 1$, by Theorem \ref{bounded}, for a $d$-dimensional grid there is a $k$-colouring, in which, each colour class is of the cardinality at most $\left\lceil |V(G)|/k\right\rceil$ and induces an acyclic graph (each edgless graph is acyclic). These two facts and some other investigation yield the proposition of a general conjecture. If the conjecture is true then it improves our results for 3-dimensional and 4-dimensional grids.

\begin{conjecture}
Let $k,d\in \mathbb{N}$. If $k \geq \left\lceil (d+1)/2 \right\rceil$ then every $d$-dimensional grid is equitably $k$-list arborable.
\end{conjecture}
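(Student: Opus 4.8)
The plan is to isolate the single critical value $k=\left\lceil (d+1)/2\right\rceil$ and certify equitable $k$-list arborability directly, since Lemma~\ref{lem:vertex_partition_arboricity} then upgrades it to every $t\ge k$ automatically. The first route I would try is the most economical one in the paper: construct a special $k$-partition $S_1\cup\cdots\cup S_{\eta+1}$ of the $d$-grid and verify inequality~(\ref{ineq_two}). A lexicographic vertex order exhibits the grid as $d$-degenerate, so peeling the sets from the top (as in Theorem~\ref{main4}) leaves every vertex with at most $d$ neighbours below it; because $2k-1\ge d$, the last vertex $x_k^j$ of each block is always admissible and only the small-index budgets $|N_{G_{j-1}}(x_i^j)|\le 2i-1$ require care. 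One would choose each $S_j$ as a \emph{corner cluster}, a vertex together with as many of its lower neighbours as the budget $k$ allows, so that internal edges absorb remaining degree, and maintain an inductive invariant forcing the shrinking region to keep exposing near-leaves on its frontier.

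The decisive obstruction, which I expect to defeat this route for large $d$, is a counting barrier. In any top-down peeling each edge $uv$ contributes to the remaining-degree sum of exactly one block---the one containing the endpoint removed first---so summing inequality~(\ref{ineq_two}) over all blocks gives $|E(G)|\le (\eta+1)\bigl(1+3+\cdots+(2k-1)\bigr)=(\eta+1)k^2\approx k\,|V(G)|$. For a cubical grid $P_{n}\square\cdots\square P_{n}$ one has $|E(G)|/|V(G)|\to d$ as $n\to\infty$, so Lemma~\ref{lem:vertex_partition_arboricity} can succeed only when $k\gtrsim d$, never at the target $k\approx d/2$. The same computation rules out the second natural route: a cycle-covering subgraph $H$ (Lemma~\ref{lem:covering}) satisfies $|E(H)|\ge |E(G)|-|V(G)|+1$, hence has edge density $\approx d-1$, and certifying its equitable choosability through the companion Lemma~\ref{lem:vertex_partition}, whose budget $\sum_i(i-1)$ is even smaller, would demand $k\gtrsim 2(d-1)$. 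Both engines of the paper thus stall near Zhang's weaker $\left\lceil(\Delta(G)+1)/2\right\rceil\approx d+1$ level, a full factor of two above the conjecture.

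Consequently the essential task is a genuinely new mechanism for global balance that avoids this density tax, and the natural template is the explicit modular colouring of Theorem~\ref{bounded}. There the product structure is used to spread a fixed palette evenly across all $d$ directions by cyclic shifts; I would attempt to port this to the list setting by colouring the grid one coordinate-layer at a time, at each stage extending an already balanced acyclic colouring of a $(d{-}1)$-layer to the next layer so that no new monochromatic cycle closes and the per-colour counts stay within one of each other. The hard part is reconciling the arbitrariness of the lists with the rigidity balance demands: the shift argument of Theorem~\ref{bounded} relies on a common palette, whereas with lists one must argue layer by layer that enough colours survive simultaneously both to avoid a second earlier neighbour of the same colour at each new vertex---the acyclicity condition, handled locally already by the $d$-degeneracy greedy of Theorem~\ref{arborable}---and to correct any imbalance inherited from the previous layer. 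Proving that these two demands can always be met jointly, presumably through a Hall-type deficiency argument on the bipartite layer-extension reinforced by an induction on $d$ that tracks a quantitative surplus of admissible colours rather than mere arborability, is where I expect essentially all the difficulty to lie.
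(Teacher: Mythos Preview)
The statement you are addressing is stated in the paper as a \emph{conjecture}, not a theorem: the authors pose it as an open problem in the concluding section and provide no proof. There is therefore nothing in the paper to compare your proposal against.

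Your analysis is nonetheless informative and largely correct as a diagnosis of \emph{why} the paper's machinery cannot reach this bound. The edge-counting barrier you identify for Lemma~\ref{lem:vertex_partition_arboricity} is genuine: summing inequality~(\ref{ineq_two}) over all blocks does force $|E(G)|\lesssim k|V(G)|$, so any special $k$-partition approach stalls near $k\approx d$ rather than $k\approx d/2$. Your parallel observation about the covering route via Lemma~\ref{lem:covering} is also sound. This is precisely why the authors leave the statement as a conjecture and settle for the weaker results of Section~\ref{sec:arboricity_grids}.

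What you have written is a research plan, not a proof, and you acknowledge as much in the final paragraph. The layer-by-layer list-analogue of Theorem~\ref{bounded} that you sketch is a plausible line of attack, but the Hall-type argument you gesture at is the entire problem: you would need to show that at every extension step the acyclicity constraint (forbidding a second same-coloured lower neighbour) and the balance constraint can be satisfied simultaneously from arbitrary $k$-lists with $k=\lceil(d+1)/2\rceil$, and nothing in your outline indicates how to do this. As it stands, the conjecture remains open.
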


However, we do not think that such a conjecture is true in general, i.e., to be $k$-list  arborable and to have a $k$-colouring in which each colour class is of the cardinality at most $\left\lceil |V(G)|/k\right\rceil$ and induces an acyclic graph, is not the sufficient condition to be equitably $k$-list  arborable. Thus we propose the following conjecture.

\begin{conjecture}
There is  a graph $G$ and  $k\in \mathbb{N}$ such that $G$ is $k$-list  arborable and $G$  has a $k$-colouring in which each colour class is of the cardinality at most $\left\lceil |V(G)|/k\right\rceil$ and induces an acyclic graph, however $G$ is not equitably $k$-list arborable.
\end{conjecture}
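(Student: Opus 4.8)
The plan is to \emph{disprove sufficiency by construction}: I would produce a triple $(G,k,L^{*})$ consisting of a graph $G$, an integer $k$, and a $k$-uniform list assignment $L^{*}$ enjoying three properties: (I) $G$ is $k$-list arborable, so for \emph{every} $k$-uniform list an acyclic colouring exists; (II) $G$ admits a balanced acyclic $k$-colouring, i.e.\ a colouring with free colours in which each class is acyclic and of size at most $\lceil |V(G)|/k\rceil$; and (III) under the single list $L^{*}$ every acyclic $L^{*}$-colouring has a colour class of size strictly larger than $\lceil |V(G)|/k\rceil$. Properties (I) and (III) together say $G$ is $k$-list arborable but not equitably $k$-list arborable, while (II) supplies the remaining hypothesis in the statement. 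By Lemma~\ref{lem:rainbow-arborable}, condition (III) is equivalent to the assertion that no acyclic $L^{*}$-colouring admits a rainbow $k$-partition, which is the form I would actually verify. Since every positive result in this paper (Theorem~\ref{thm:arboricity_two}, Theorem~\ref{thm:general-upper-bound}, the grid theorems, and the $2$-degenerate corollary) certifies equitable list arborability within its range, the witness must be chosen \emph{outside} all of them: $G$ should have arboricity at least $3$, or else $k$ must be taken strictly below $\lceil(\Delta(G)+1)/2\rceil$, and $G$ should be neither $2$-degenerate nor a grid.

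The heart of the construction is a \emph{rigid gadget} forcing a colour ratio, exactly the phenomenon behind the non-monotonicity of equitable colouring mentioned after Conjecture~\ref{con0}. The idea is to make acyclicity fight the list. I would use lists that all contain one common \emph{default} colour, say $1$, while their remaining $k-1$ entries are \emph{private} to small groups of vertices. The graph is then engineered so that each private colour can occupy only a forest-inducing, hence small, vertex set, whereas the default colour alone can absorb a large induced forest. Any acyclic colouring is thereby squeezed: spreading vertices onto private colours is blocked because the relevant induced subgraphs are too dense (a second monochromatic vertex there closes a cycle), so strictly more than $\lceil |V(G)|/k\rceil$ vertices are driven onto colour $1$. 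Concretely I would experiment with dense bipartite or complete-multipartite blocks, where (as in the inequality $(a-1)(b-1)\le 0$ forcing a monochromatic class to be star-like) the forest constraint caps each class on one side; a connected gadget $H$ with list $L_H$ in which \emph{every} acyclic $L_H$-colouring uses colour $1$ on a fixed fraction exceeding $1/k$ of $V(H)$ would immediately yield (III) with $G=H$.

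Verifying (I) and (II) I expect to be routine once $G$ is fixed. For (I) I would either keep $\Delta(G)$ small enough that Theorem~\ref{thm:general-upper-bound} applies for the chosen $k$, or exhibit a spanning forest $F$ whose deletion leaves a graph $G-F$ that covers all cycles and is equitably $k$-choosable, so that Lemma~\ref{lem:covering} gives $k$-list arborability directly. For (II) I would write down one explicit balanced acyclic colouring with free colours; the constructions behind Theorem~\ref{bounded} (a balanced proper, hence acyclic, colouring) and Theorem~\ref{arborable} (an acyclic colouring) give the two requirements separately, and for a specific small $G$ they can be merged by hand into a single balanced acyclic colouring.

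The main obstacle is property (III): ruling out \emph{all} balanced acyclic $L^{*}$-colourings, quantifying over every number of colours that may appear under $L^{*}$. This is delicate precisely because acyclicity is a global, topological constraint and the two escape routes pull in opposite directions: using many private colours tends to \emph{both} avoid monochromatic cycles \emph{and} produce small, hence balanced, classes, so a naive gadget is rescued by the spread-out colouring, exactly as the small cases $K_{2,n}$ and $C_{2m}$ are. The creative step is to design $G$ so that spreading is forced to create a monochromatic cycle while concentrating violates balance, and then to certify this with a clean invariant --- for instance a counting or discharging bound showing that colour $1$ is overused, or a Hall-type infeasibility statement for balanced partitions into $L^{*}$-compatible forests. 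Establishing the rigidity of such a gadget is the genuinely open part; a systematic finite search over small graphs of arboricity $3$ with $k=2$, a regime the present theorems do not cover, is a natural way to locate the minimal witness, after which (I) and (II) reduce to the checks above.
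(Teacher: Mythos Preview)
The statement you are attempting to prove is a \emph{conjecture} in the paper, not a theorem: the authors pose it as an open problem in the concluding remarks and offer no proof, no candidate graph, and no partial argument. There is therefore nothing in the paper to compare your proposal against.

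As for the proposal itself, it is a reasonable research outline but not a proof. You correctly identify the shape a witness must take (outside the scope of all the paper's positive results, so arboricity at least $3$ or $k<\lceil(\Delta+1)/2\rceil$), and you correctly isolate property (III) as the crux. However, you do not actually produce a graph $G$, an integer $k$, or a list assignment $L^*$; you only describe heuristics (a ``default'' colour plus private colours, dense bipartite blocks) and then explicitly concede that ``establishing the rigidity of such a gadget is the genuinely open part.'' That concession is accurate: without a concrete construction and a verified case analysis showing that every acyclic $L^*$-colouring overloads some colour, nothing has been proved. The suggestion to search small graphs of arboricity $3$ with $k=2$ is sensible as a starting point, but until such a search succeeds and the resulting example is verified, the conjecture remains open --- exactly as the paper leaves it.
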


Note that the motivation of the paper came from Zhang's conjectures, but along the way, we have obtained some new results on equitable $k$-choosability of grids.


\end{document}